\providecommand{\tabularnewline}{\\}
\numberwithin{equation}{section}
\numberwithin{figure}{section}
\theoremstyle{plain}
\newtheorem{theorem}{\protect\theoremname}
  \theoremstyle{plain}
  \newtheorem{lem}[theorem]{\protect\lemmaname}
  \theoremstyle{plain}
  \newtheorem{prop}[theorem]{\protect\propositionname}
  \theoremstyle{remark}
  \newtheorem{rem}[theorem]{\protect\remarkname}
  \theoremstyle{definition}
  \newtheorem{defn}[theorem]{\protect\definitionname}
  \theoremstyle{plain}
  \newtheorem{cor}[theorem]{\protect\corollaryname}
  \theoremstyle{definition}
  \newtheorem{example}[theorem]{\protect\examplename}
\numberwithin{theorem}{section}
  \providecommand{\corollaryname}{Corollary}
  \providecommand{\definitionname}{Definition}
  \providecommand{\examplename}{Example}
  \providecommand{\lemmaname}{Lemma}
  \providecommand{\propositionname}{Proposition}
  \providecommand{\remarkname}{Remark}
\providecommand{\theoremname}{Theorem}
\begin{document}
\title{Volume comparison for $\mathcal{C}^{1,1}$-metrics}

\author{Melanie Graf}

\date{02.04.2016}

\address{Faculty of Mathematics, University of Vienna}

\email{melanie.graf@univie.ac.at}
\begin{abstract}
The aim of this paper is to generalize certain volume comparison theorems (Bishop-Gromov and a recent result of Treude and Grant \citep{TG}) for smooth Riemannian or Lorentzian manifolds to metrics that are only $\mathcal{C}^{1,1}$ (differentiable with Lipschitz continuous derivatives). 

In particular we establish (using approximation methods) a volume monotonicity result for the evolution of a compact subset of a spacelike, acausal, future causally complete (i.e., the intersection of any past causal cone with the hypersurface is relatively compact) hypersurface with an upper bound on the mean curvature in a  globally hyperbolic spacetime with a  $\mathcal{C}^{1,1}$-metric with a lower bound on the timelike Ricci curvature, provided all timelike geodesics starting in this compact set exist long enough. As an intermediate step we also show that the cut locus of such a hypersurface still has measure zero in this regularity -- generalizing the well-known result for smooth metrics.

To show that these volume comparison results have some very nice applications we then give a proof of Myers' theorem, of a simple singularity theorem for globally hyperbolic spacetimes, and of Hawking's singularity theorem directly in this regularity.

\end{abstract}

\keywords{Lorentzian and Riemannian manifolds, Comparison geometry, low regularity,
Singularity theorems}

\subjclass[2010]{53C20, 53C50, 83C75}

\maketitle

\section{Introduction}

There are many similarities between the ideas used in the proof of
Riemannian comparison theorems (in particular Myers' theorem) and
the singularity theorems in Lorentzian geometry. Both use curvature
conditions to obtain that in some sense the maximal length of a geodesic
without conjugate points is bounded: in the case of Myers' theorem
one assumes completeness and obtains a bound on the
diameter of the manifold (as the distance between two points is given by the length
of a minimizing geodesic, which can not have conjugate points) and
in the case of, e.g., the Hawking singularity theorem the assumptions
together with geodesic completeness would imply compactness of a certain
Cauchy horizon which then gives a contradiction. 
While there has been some interest in developing Lorentzian analogues to many results from Riemannian comparison geometry in general (see e.g.\ \citep{Alexander_Lorentz_sec_curv_bounds}, \citep{andersson_howard} and \citep{ehrlich_sanchez})
this close connection to the singularity theorems
was explored further 
by Treude and Grant in their recent paper \citep{TG}, where they use Riccati comparison techniques to
prove area and volume monotonicity theorems
in Lorentzian geometry (with respect to fixed Lorentzian warped product
manifolds). These are then applied to give a new proof of the classical Hawking
singularity theorem. 

We will show that many of these results carry over to
$\mathcal{C}^{1,1}$ (locally Lipschitz continuous first derivatives)
regularity by showing volume monotonicity results for both Riemannian
and Lorentzian $\mathcal{C}^{1,1}$-metrics with appropriate curvature
bounds and applying them to prove a version of Myers' theorem and
Hawking's singularity theorem, respectively.

In general, for a (semi-)Riemannian metric the class $\mathcal{C}^{1,1}$
is the lowest differentiability class of the metric where one still
has local existence and uniqueness of solutions of the geodesic equation.
Also by Rademacher's theorem all curvature terms still exist almost
everywhere and are locally bounded, which allows the definition of
curvature bounds in the following way. We say that the Ricci curvature
tensor $\mathbf{Ric}$ is bounded from below (by $\kappa$) if for
every smooth, local vector field $X\in\mathfrak{X}(U)$ for some open
and relatively compact $U\subset M$ one has that the function
\begin{equation}
p\mapsto\mathbf{Ric}(p)(X_{p},X_{p})-(n-1)\kappa g(p)(X_{p},X_{p})\label{eq:ric est}
\end{equation}
is non-negative as an element of $L^{\infty}(U)$ (i.e., is non-negative almost
everywhere). If $M$ is Lorentzian we say that the timelike Ricci
curvature is bounded from below (by $-\kappa$) if the above holds for any smooth,
local timelike vector field. Clearly this coincides with the usual
notion for smooth metrics.

As further motivation for studying metrics of this regularity we give
a brief overview about the specific situations in the Riemannian and
the Lorentzian setting.

In Riemannian geometry there are ways to generalize curvature bounds
to even lower regularity, however this requires - at first glance
- very different definitions (see e.g.\ \citep{sturm_metricMeasureII,LottVillani2009},
where metric measure spaces with lower bounds on the Ricci curvature
are studied). While these definitions are equivalent for smooth metrics
this has not yet been shown for $\mathcal{C}^{1,1}$-metrics, so at
least for now those two approaches are independent.

In Lorentzian geometry there has recently been an increased interest
and many advances in the understanding of low regularity spacetimes
(i.e.\ $\mathcal{C}^{1,1}$- instead of $\mathcal{C}^{2}$-metrics,
see \citep{CG,Minguzzi_convexnbhdsLipConSprays,KSS,KSSV}), which
allowed the proof of both the Hawking and the Penrose singularity
theorem in this regularity (see \citep{hawkingc11,PenroseC11inpaper}),
a problem that had been open for a long time (cf.\ \citep{Seno1}).
From the viewpoint of general relativity, the importance of this regularity
is that it allows for a finite jump in the matter variables via the
Einstein equations. It is also worth noting that many of the standard results fail dramatically when lowering the regularity further, for example it is shown in \citep{CG} that for
any $\alpha\in(0,1)$ there exist \textquoteleft{}bubbling metrics\textquoteright{}
(of regularity $\mathcal{C}^{0,\alpha}$), whose lightcones have nonempty
interior.

The plan of the paper is as follows. In section \ref{sec:Volume-comparison-for-Riemannian}
we study Riemannian manifolds with $\mathcal{C}^{1,1}$-metrics with
a lower bound on the Ricci curvature and show a $\mathcal{C}^{1,1}$
version of the Bishop-Gromov volume comparison theorem for Riemannian
manifolds with a lower bound on the Ricci curvature. This also serves
as a preparation for the Lorentzian case as it requires significantly
less technical details but the ideas remain largely the same. In section
\ref{sec:The-Lorentzian-case} we first give the definition of the
cosmological comparison condition (as introduced in \citep{TG}) and
a brief overview of relevant results from causality theory for $\mathcal{C}^{1,1}$-metrics,
in particular concerning global hyperbolicity and maximizing geodesics
to a subset. Then we show the existence of suitable approximating
metrics (using results from \citep{CG,KSSV,hawkingc11}) and in section
\ref{sub:The-cut-locus} we show that for $\mathcal{C}^{1,1}$-metrics
 the cut locus still has measure zero. As a last preparation we define
our comparison spacetimes (again introduced in \citep{TG}) as Robertson-Walker spacetimes
with constant Ricci curvature and study their dependence on the curvature
quantities $\kappa$ and $\beta$. This then allows us to show (as
a generalization of \citep[Thm.~9]{TG} to $\mathcal{C}^{1,1}$-metrics)
\begin{theorem}
[Volume comparison]\label{thm: volume}Let $\kappa,\beta\in\mathbb{R}$,
$g\in\mathcal{C}^{1,1}$ and assume $\left(M,g,\Sigma\right)$ is
globally hyperbolic and satisfies $CCC(\kappa,\beta)$ (see Def.\
\ref{def: CCC}). Let $A\subset\Sigma$ be compact with $\mu_{\Sigma}(\partial A)=0$,
$B\subset\Sigma_{\kappa,\beta}$ (with finite, non-zero area) and
$T>0$ such that all timelike, future directed, unit speed geodesics starting
orthogonally to $A$ exist until at least $T$. Then the function
\[
t\mapsto\frac{\mathrm{vol}\, B_{A}^{+}(t)}{\mathrm{vol}_{\kappa,\beta}B_{B}^{+}(t)}
\]
is nonincreasing on $\left[0,T\right]$.
\end{theorem}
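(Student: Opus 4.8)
\emph{Overview and reduction.} The plan is to deduce the theorem from its smooth predecessor \citep[Thm.~9]{TG} by approximating $g$ with smooth metrics, using the measure-zero property of the cut locus established in Section~\ref{sub:The-cut-locus} to control the limit. First I would rewrite the two volumes as integrals of the Jacobian of the normal exponential map $\exp^\perp$. For $p\in A$ let $\gamma_p$ be the future directed unit speed geodesic with $\gamma_p(0)=p$ and $\dot\gamma_p(0)\perp\Sigma$, let $s_\Sigma(p)\in(0,\infty]$ be its cut parameter, and let $\theta(s,p)$ be the Jacobian determinant of $\exp^\perp$ at $(p,s)$; for $g\in\mathcal{C}^{1,1}$ this is well defined and locally bounded since $\exp^\perp$ is $\mathcal{C}^1$. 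Combining the $\mathcal{C}^{1,1}$ causality results of Section~\ref{sec:The-Lorentzian-case} (maximizers to $\Sigma$; $\exp^\perp$ a diffeomorphism before the cut locus) with $\mu_\Sigma(\mathrm{Cut}(\Sigma))=0$ and $\mu_\Sigma(\partial A)=0$ one obtains
\[
\mathrm{vol}\,B_A^+(t)=\int_A\int_0^{\min(t,\,s_\Sigma(p))}\theta(s,p)\,ds\,d\mu_\Sigma(p),
\]
and, by the rotational symmetry of the comparison spacetime, $\mathrm{vol}_{\kappa,\beta}B_B^+(t)=\mathrm{area}(B)\,w_{\kappa,\beta}(t)$ with $w_{\kappa,\beta}(t)=\int_0^{\min(t,\,s_{\kappa,\beta})}\theta_{\kappa,\beta}(s)\,ds$ depending only on $\kappa$ and $\beta$. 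Since $\mathrm{area}(B)>0$ is a constant it cancels from the quotient, so it suffices to prove that $t\mapsto\mathrm{vol}\,B_A^+(t)/w_{\kappa,\beta}(t)$ is nonincreasing on $[0,T]$.

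\emph{Approximation.} Next, using the approximation scheme of Section~\ref{sec:The-Lorentzian-case} (following \citep{CG,KSSV,hawkingc11}), I would pick smooth metrics $g_\varepsilon\to g$ in $\mathcal{C}^1_{\mathrm{loc}}$ whose light cones are slightly narrower than those of $g$, arranged so that $(M,g_\varepsilon)$ stays globally hyperbolic, $\Sigma$ stays spacelike, acausal and future causally complete, and --- this is the point where the low regularity really enters --- so that the almost-everywhere lower bound on $\mathbf{Ric}$ and upper bound on the mean curvature of $\Sigma$ survive mollification up to a controllable error, i.e.\ $(M,g_\varepsilon,\Sigma)$ satisfies $CCC(\kappa_\varepsilon,\beta_\varepsilon)$ with $\kappa_\varepsilon\to\kappa$ and $\beta_\varepsilon\to\beta$. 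For $\varepsilon$ small the $g_\varepsilon$-normal geodesics from $A$ still exist up to $T$ (continuous dependence of the geodesic flow on the metric), so \citep[Thm.~9]{TG} applies to $g_\varepsilon$: the function $t\mapsto\mathrm{vol}_{g_\varepsilon}B_A^{+,\varepsilon}(t)/w_{\kappa_\varepsilon,\beta_\varepsilon}(t)$ is nonincreasing on $[0,T]$, where $B_A^{+,\varepsilon}(t)$ denotes the $g_\varepsilon$-evolution of $A$.

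\emph{Passage to the limit.} Now let $\varepsilon\to0$. For the denominator, the warping functions $f_{\kappa_\varepsilon,\beta_\varepsilon}$ of the Robertson--Walker comparison spacetimes solve an ODE whose coefficients and initial data depend continuously on $(\kappa,\beta)$, and so do their first zeros; hence $w_{\kappa_\varepsilon,\beta_\varepsilon}\to w_{\kappa,\beta}$ uniformly on $[0,T]$. For the numerator, the $\mathcal{C}^1$-convergence $\exp^\perp_{g_\varepsilon}\to\exp^\perp_g$ valid in this regularity (cf.\ the cited results on exponential maps of $\mathcal{C}^{1,1}$-metrics in Section~\ref{sec:The-Lorentzian-case}) gives $\theta_{g_\varepsilon}\to\theta$ uniformly on the fixed compact parameter set $A\times[0,T]$, while $\tau_{g_\varepsilon}\to\tau_g$ locally uniformly on $M$. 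Using the description of the evolution through the time separation function together with $\mathrm{Cut}(\Sigma)$ being $\mu_\Sigma$-null and $\{\,\tau_\Sigma=t\,\}$ being $g$-null for all but countably many $t$, the characteristic functions of $B_A^{+,\varepsilon}(t)$ converge in $L^1$ to that of $B_A^+(t)$ for a.e.\ $t\in[0,T]$, and therefore $\mathrm{vol}_{g_\varepsilon}B_A^{+,\varepsilon}(t)\to\mathrm{vol}\,B_A^+(t)$ for a.e.\ $t$ by dominated convergence over a fixed compact set containing all the evolutions. Monotonicity of the quotient passes to this limit on a full-measure subset of $[0,T]$; since $t\mapsto\mathrm{vol}\,B_A^+(t)$ and $w_{\kappa,\beta}$ are continuous (Step~1), the limiting quotient is continuous, hence nonincreasing on all of $[0,T]$.

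\emph{Main obstacle.} The delicate part is the numerator limit: cut parameters are merely semicontinuous under perturbations of the metric, so $s_\Sigma^\varepsilon(p)$ need not converge to $s_\Sigma(p)$ and one cannot simply pass to the limit inside the inner integral. The remedy is to describe $B_A^+(t)$ intrinsically through the time separation function rather than through $s_\Sigma$, which trades this difficulty for the convergence $\tau_{g_\varepsilon}\to\tau_g$ together with the measure-zero statements; making the corresponding ``no volume is lost or created near the cut locus'' argument precise, and verifying carefully that the narrow-cone approximation preserves \emph{all} the hypotheses of \citep[Thm.~9]{TG} with converging constants $\kappa_\varepsilon,\beta_\varepsilon$ (in particular the Ricci and mean-curvature bounds), is where the real work lies --- and is exactly what the cut-locus result of Section~\ref{sub:The-cut-locus} and the approximation results of Section~\ref{sec:The-Lorentzian-case} are designed to supply.
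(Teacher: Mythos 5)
Your proposal follows the same general strategy as the paper---approximate by smooth metrics, apply the smooth comparison result, and pass to the limit using the measure-zero property of the cut locus---and you correctly identify (in the ``Main obstacle'' paragraph) that the remedy for the noncontinuity of the cut parameter is to describe $B_A^+(t)$ through $\tau_\Sigma$ together with the measure-zero statements. That is precisely the content of Lemma~\ref{lem:volume convergence}. However, there is a genuine gap you gloss over in the step ``so \citep[Thm.~9]{TG} applies to $g_\varepsilon$.'' The curvature and mean-curvature bounds that mollification delivers (Lemmas~\ref{lem: ricci} and~\ref{lem: mean curvature}) hold only on compact subsets of $TM$, respectively of $\Sigma$, and, for the Ricci bound, only for vectors with $\|X\|_h<C$ for a fixed $C$ --- the remark after Lemma~\ref{lem: ricci} shows that the bound genuinely fails to be global even for $\mathcal{C}^\infty$ approximations. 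Treude--Grant, however, assume \emph{global} bounds. Thus you cannot apply their theorem off the shelf; you must re-run their Riccati/coarea argument and check that the local bounds along the relevant compact set of normal geodesics suffice, which is what Propositions~\ref{prop: area for approx} and~\ref{prop: volume for approx} do. Absent that, the monotonicity statement for $g_\varepsilon$ that your limit argument relies on is not established.

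Two secondary points. First, your Step~1 hinges on the Jacobian $\theta(s,p)$ of $\exp^\perp$ being well defined and on $\mathcal{C}^1$-convergence $\exp^\perp_{g_\varepsilon}\to\exp^\perp_g$; but for a $\mathcal{C}^{1,1}$-metric $\exp^\perp$ is only locally Lipschitz (the paper uses exactly this in Prop.~\ref{prop:cut locus has measure zero}), and the approximation result Prop.~\ref{prop: f} gives only $\mathcal{C}^0$-convergence of the geodesic flow, not $\mathcal{C}^1$. You in fact abandon the Jacobian formulation anyway in the last paragraph, so this can be dropped; the paper works directly with $\chi_{B^+_{\varepsilon,A}(t)}\sqrt{|\det g_{\varepsilon,ij}|}$ and a five-way case analysis to get pointwise a.e.\ convergence for every fixed $t$, which is cleaner than the ``a.e.\ $t$ plus continuity'' argument you sketch. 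Second, the convergence of the comparison warping functions $f_{\kappa_\varepsilon,\beta_\varepsilon}\to f_{\kappa,\beta}$ is more delicate than uniform continuity in $(\kappa,\beta)$: the comparison space changes type ($N_{\kappa,\beta}$ jumps between $\mathbb{R}^{n-1}$, $S^{n-1}$, $H^{n-1}$) across certain boundaries, and the paper only establishes the needed convergence along carefully chosen one-sided sequences $\kappa-\delta_n$, $\beta+\eta_n$ (Lemma~\ref{lem:convergence of f tilde} and the remark after Corollary~3.17); this is why the paper decouples the $\varepsilon\to 0$ and $\delta,\eta\to 0$ limits rather than taking a joint limit as you propose.
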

Finally, in section \ref{sec:Applications}, as applications we give
a proof of a $\mathcal{C}^{1,1}$-Myers' theorem in the Riemannian
and two $\mathcal{C}^{1,1}$-singularity theorems (one of them being
an alternative proof of the $\mathcal{C}^{1,1}$-version of Hawking's
theorem proved in \citep[Thm.~1.1]{hawkingc11}) in the Lorentzian
case.

\subsection*{Notation}

Throughout $M$ will always be a connected, Hausdorff and second countable
smooth manifold of dimension $n\geq2$. For a semi-Riemannian metric
$g$ on $M$ the curvature tensor of the metric is defined with the
convention $R(X,Y)Z=\left(\left[\nabla_{X},\nabla_{Y}\right]-\nabla_{[X,Y]}\right)Z$
and we denote the Ricci tensor of $g$ by $\mathbf{Ric}$.

\section{\label{sec:Volume-comparison-for-Riemannian}Volume comparison for
Riemannian \texorpdfstring{$\mathcal{C}^{1,1}$}{C1,1}-metrics}

The goal of this first section is to show a $\mathcal{C}^{1,1}$ version
of the Bishop-Gromov volume comparison theorem.
\begin{theorem}
[Bishop-Gromov] \label{thm:classical bishop gromov}Suppose $\left(M,g\right)$
(with $g$ smooth) is a complete Riemannian manifold with $\mathbf{Ric}\geq(n-1)\kappa g$
for some $\kappa\in\mathbb{R}$. Then
\[
r\mapsto\frac{\mathrm{vol}B_{p}(r)}{\mathrm{vol}_{\kappa}B^{\kappa}(r)},
\]
where $B^{\kappa}(r)$ denotes any ball of radius $r$ in the $n$-dimensional simply-connected
Riemannian manifold with constant sectional curvature equal to $\kappa$,
is a nonincreasing function on $(0,\infty)$ and $\mathrm{vol}B_{p}(r)\leq\mathrm{vol}_{\kappa}B^{\kappa}(r)$.
\end{theorem}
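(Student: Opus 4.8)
The plan is to run the classical proof of Bishop--Gromov via geodesic polar coordinates and Riccati (Raychaudhuri) comparison; completeness and smoothness make every ingredient available. First I would fix $p\in M$ and introduce geodesic polar coordinates on the open set $M\setminus\mathrm{Cut}(p)$, writing the Riemannian volume element as $dV=\mathcal{A}(r,\theta)\,dr\,d\theta$, where $\theta$ ranges over the unit sphere $S_pM\subset T_pM$, $r\in(0,c(\theta))$ with $c(\theta)\in(0,\infty]$ the distance to the cut point in direction $\theta$, and $\mathcal{A}(r,\theta)=\det A(r,\theta)$ is the Jacobian of $\exp_p$, with $A$ the Jacobi tensor along $s\mapsto\exp_p(s\theta)$ satisfying $A(0)=0$, $A'(0)=\mathrm{Id}$. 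The mean curvature of the geodesic sphere $\partial B_p(r)$ is then $m(r,\theta)=\partial_r\log\mathcal{A}(r,\theta)=\mathrm{tr}\,S(r,\theta)$, where $S=A'A^{-1}$ is the shape operator with respect to the outward normal $\partial_r$, and from the Riccati equation $S'+S^2+R_{\partial_r}=0$ (here $R_{\partial_r}X=R(X,\partial_r)\partial_r$) together with Cauchy--Schwarz ($|S|^2\ge\tfrac{(\mathrm{tr}\,S)^2}{n-1}$) and the hypothesis $\mathbf{Ric}\ge(n-1)\kappa g$ one gets the differential inequality $m'+\tfrac{m^2}{n-1}\le m'+|S|^2=-\mathbf{Ric}(\partial_r,\partial_r)\le-(n-1)\kappa$, with asymptotics $m(r,\theta)=\tfrac{n-1}{r}+O(r)$ as $r\to0^+$.

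Next I would compare with the constant-curvature model. There the analogous mean curvature is $m_\kappa(r)=(n-1)\,s_\kappa'(r)/s_\kappa(r)$, where $s_\kappa$ solves $s_\kappa''+\kappa s_\kappa=0$, $s_\kappa(0)=0$, $s_\kappa'(0)=1$, and it satisfies $m_\kappa'+\tfrac{m_\kappa^2}{n-1}=-(n-1)\kappa$ with the same singular asymptotics; a standard Riccati comparison (if $m$ ever caught up to $m_\kappa$ its derivative there would be no larger than $m_\kappa'$, preventing a crossing) then yields $m(r,\theta)\le m_\kappa(r)$ on $(0,c(\theta))$. Equivalently $\partial_r\log\!\big(\mathcal{A}(r,\theta)/\mathcal{A}_\kappa(r)\big)\le0$ with $\mathcal{A}_\kappa(r)=s_\kappa(r)^{n-1}$ the model density, so for each $\theta$ the ratio $r\mapsto\mathcal{A}(r,\theta)/\mathcal{A}_\kappa(r)$ is nonincreasing on $(0,c(\theta))$ and tends to $1$ as $r\to0^+$.

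To globalize I would extend $\mathcal{A}(\cdot,\theta)$ by $0$ for $r\ge c(\theta)$; since $\mathrm{Cut}(p)$ is closed and has measure zero this changes no integral, and the extended ratio $r\mapsto\mathcal{A}(r,\theta)/\mathcal{A}_\kappa(r)$ is nonincreasing on all of $(0,\infty)$ (it decreases up to $c(\theta)$, then drops to $0$). Integrating in $\theta$ and using that an average of nonincreasing functions is nonincreasing shows $r\mapsto\mathrm{area}\,\partial B_p(r)/\mathrm{area}_\kappa\partial B^\kappa(r)$ is nonincreasing. Then the elementary lemma ``if $f,g>0$ and $f/g$ is nonincreasing, then $r\mapsto(\int_0^r f)/(\int_0^r g)$ is nonincreasing'' (the right-hand side is a weighted average of $f/g$ over $[0,r]$, hence $\ge(f/g)(r)$, so its derivative is $\le0$), applied with $f=\mathrm{area}\,\partial B_p$, $g=\mathrm{area}_\kappa\partial B^\kappa$ via the coarea identity $\mathrm{vol}\,B_p(r)=\int_0^r\mathrm{area}\,\partial B_p(s)\,ds$ and its model analogue, gives that $r\mapsto\mathrm{vol}\,B_p(r)/\mathrm{vol}_\kappa B^\kappa(r)$ is nonincreasing. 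Finally both volumes are $\tfrac{\omega_{n-1}}{n}r^n+o(r^n)$ as $r\to0^+$ (since $\mathcal{A},\mathcal{A}_\kappa\sim r^{n-1}$), so the ratio $\to1$, and monotonicity forces $\mathrm{vol}\,B_p(r)\le\mathrm{vol}_\kappa B^\kappa(r)$ for all $r>0$.

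The only genuinely delicate point in the smooth setting is the cut locus: one needs that $\mathrm{Cut}(p)$ is closed with measure zero, that $\exp_p$ restricts to a diffeomorphism from the star-shaped segment domain onto $M\setminus\mathrm{Cut}(p)$, and that the monotonicity in $r$ genuinely persists across $r=c(\theta)$ once $\mathcal{A}$ has been set to $0$ there. All of this is classical here, but it is precisely this cut-locus input that will require substantial new work in the $\mathcal{C}^{1,1}$ theory developed later in the paper, where the smooth theory of the cut locus is not directly available.
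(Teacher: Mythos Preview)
Your argument is correct and is exactly the standard proof of the smooth Bishop--Gromov theorem via geodesic polar coordinates, the Riccati inequality for the mean curvature of geodesic spheres, and the ``integrate the monotone ratio'' lemma. There is nothing to object to in the mathematics.

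However, note that the paper does \emph{not} give its own proof of this statement: it is quoted as the classical smooth result and simply referred to the literature (\citep[Cor.~3.3]{zhu1997comparison}). So there is no ``paper's proof'' to compare against here --- you have supplied precisely the classical argument that the citation points to.

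One comment on your final paragraph: your remark that ``it is precisely this cut-locus input that will require substantial new work in the $\mathcal{C}^{1,1}$ theory'' slightly misreads the paper's strategy in the Riemannian case. The paper does \emph{not} redo the polar-coordinate/Riccati proof with a $\mathcal{C}^{1,1}$ cut locus; instead it approximates $g$ by smooth complete metrics $g_\varepsilon$ with Ricci bounded below by $(n-1)(\kappa-\delta)$ on compacta (Lemmas~\ref{lem:riem approx} and~\ref{lem:Riem approx ricci}), applies the smooth Theorem~\ref{thm:classical bishop gromov} as a black box to each $g_\varepsilon$, and then passes to the limit (Theorem~\ref{thm:grmov volume}). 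The genuine cut-locus analysis you anticipate is developed only in the Lorentzian part (Section~\ref{sub:The-cut-locus}), where it is needed to control the convergence of volumes of the causal balls.
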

A proof of the classical result (for smooth metrics) can be found,
e.g., in \citep[Cor.~3.3]{zhu1997comparison}. The idea of the proof
for $\mathcal{C}^{1,1}$-metrics is to apply the classical result
to some smooth approximating metrics, so we first have to show that
we can find approximations such that $\left(M,g_{\varepsilon}\right)$
is a complete Riemannian manifold and that for any compact $K\subset M$
and $\delta>0$ we have $\mathbf{Ric}_{\varepsilon}|_{K}\geq\left(n-1\right)(\kappa-\delta)g_{\varepsilon}|_{K}$
(where $\mathbf{Ric}_{\varepsilon}$ denotes the Ricci tensor of $g_{\varepsilon}$)
for $\varepsilon$ small enough. 
\begin{lem}
\label{lem:riem approx}Let $g\in\mathcal{C}^{1,1}$ be a (geodesically) complete
Riemannian metric on $M$. Then there exist smooth complete Riemannian
metrics $g_{\varepsilon}$ on $M$ such that $g_{\varepsilon}\to g$
in $\mathcal{C}^{1}$, the approximations have locally uniformly bounded
second derivatives and 
\begin{equation}
d(g,g_{\varepsilon}):=\sup_{p\in M}\sup_{0\neq X,Y\in T_{p}M}\frac{\left|g(X,Y)-g_{\varepsilon}(X,Y)\right|}{\left|X\right|_{g}\left|Y\right|_{g}}\to0.\label{eq:d(g,geps) to zero}
\end{equation}
\end{lem}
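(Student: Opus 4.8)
The plan is to construct the $g_\varepsilon$ by a standard convolution-smoothing procedure adapted to a manifold, and then to work hard to upgrade the naive estimates to the three required uniformities (global $\mathcal{C}^1$-convergence, local bounds on second derivatives, and smallness in the scale-invariant quantity $d(g,g_\varepsilon)$), while simultaneously preserving completeness. First I would fix a smooth background Riemannian metric $h$ on $M$ together with a locally finite atlas of charts whose domains have compact closure, and a subordinate partition of unity $(\chi_i)$. On each chart I mollify the (coordinate) components of $g$ with a standard mollifier $\rho_\varepsilon$, obtaining smooth local metrics, and then patch them together with the partition of unity to get a global smooth symmetric $2$-tensor $g_\varepsilon$. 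Because $g$ is $\mathcal{C}^{1,1}$, its first derivatives are locally Lipschitz, hence the mollified first derivatives converge \emph{locally uniformly} to the genuine first derivatives, and the second derivatives of $g_\varepsilon$ are controlled locally by the Lipschitz constants of $\partial g$ — this gives locally uniformly bounded second derivatives essentially for free. For $\varepsilon$ small enough $g_\varepsilon$ is positive definite (it is a $\mathcal{C}^1$-small perturbation of $g$ on each compact piece), so it is a genuine smooth Riemannian metric.

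The next, and main, issue is that the convergence $g_\varepsilon\to g$ as stated in \eqref{eq:d(g,geps) to zero} must be \emph{uniform over all of $M$} in the scale-invariant quantity, and likewise completeness of $g_\varepsilon$ must be arranged globally; a purely local mollification gives only local control and a priori says nothing near infinity. The standard device here is to let the mollification parameter depend on the point: instead of a single $\varepsilon$ one chooses a positive smooth function $\varepsilon(p)$ that tends to $0$ fast enough as $p\to\infty$ (relative to an exhaustion of $M$ by compact sets), and mollifies with this variable width. Concretely, following the approach in the regularization literature for low-regularity metrics (this is exactly the kind of construction used in \citep{CG,KSSV}), one works in a fixed system of charts, chooses on the overlap regions a variable smoothing parameter, and estimates the resulting error by the modulus of continuity of $g$ and of $\partial g$ at the relevant scale. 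Since $d(g,g_\varepsilon)$ is measured \emph{with respect to $g$ itself}, the quantity being controlled is genuinely scale-invariant, so the local estimate $|g-g_\varepsilon|_g\le C_i\,\varepsilon$ on the $i$-th chart (with $C_i$ depending only on $g|_{\overline{U_i}}$) can be made globally small by shrinking $\varepsilon$ faster than $C_i$ grows. I would package this as: for any prescribed $\eta>0$ there is a choice of variable smoothing parameter yielding $d(g,g_\varepsilon)<\eta$, and letting $\eta=\eta(\varepsilon)\to 0$ gives the claim.

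It remains to secure completeness of $g_\varepsilon$, and this is where I expect the real obstacle to lie, since a $\mathcal{C}^1$-small perturbation of a complete metric need not be complete in general. The key observation is the scale-invariant estimate \eqref{eq:d(g,geps) to zero}: if $d(g,g_\varepsilon)<\tfrac12$, say, then $\tfrac12 g \le g_\varepsilon \le \tfrac32 g$ as bilinear forms \emph{everywhere on $M$}, so $g$-lengths and $g_\varepsilon$-lengths of curves are uniformly comparable. Hence every $g_\varepsilon$-Cauchy sequence is $g$-Cauchy, $g$-bounded sets are $g_\varepsilon$-bounded and conversely, and closed $g_\varepsilon$-balls are compact; by Hopf–Rinow $g_\varepsilon$ is complete. (This is precisely why the lemma bundles completeness together with \eqref{eq:d(g,geps) to zero}: the latter is not merely decoration but the mechanism that propagates completeness.) Finally I would remark that $\mathcal{C}^1$-convergence of the metrics implies $\mathcal{C}^0$-convergence of the Christoffel symbols, which is what is actually needed in the proof of the $\mathcal{C}^{1,1}$ Bishop–Gromov theorem, and that the locally uniform bound on second derivatives is what will later allow one to pass to the limit in the Riccati/Ricci estimate \eqref{eq:ric est}. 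The one genuinely delicate point to write out carefully is the variable-parameter patching: one must check that multiplying the locally mollified pieces by the partition of unity does not reintroduce an uncontrolled term — this works because $\sum_i \chi_i \equiv 1$ forces the ``patching error'' to be a sum of differences $(\rho_{\varepsilon}*g)_i - (\rho_{\varepsilon}*g)_j$ on overlaps, each of which is again small by the modulus-of-continuity estimate.
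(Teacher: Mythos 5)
Your proposal is correct and matches the paper's proof in substance: chartwise mollification via a partition of unity, then a point-dependent smoothing parameter to globalize the scale-invariant estimate (the paper delegates precisely this step to the globalization lemma \citep[Lem.~2.4]{KSSV}, producing $g_\varepsilon(p)=\tilde g_{u(\varepsilon,p)}(p)$), and finally the two-sided comparability $(1-\delta)g\leq g_\varepsilon\leq(1+\delta)g$ to propagate completeness via Hopf--Rinow. The only cosmetic difference is that you fix $\delta=\tfrac12$ while the paper keeps $\delta$ general and records the induced distance estimate $\sqrt{1-\delta}\,d_g\leq d_{g_\varepsilon}\leq\sqrt{1+\delta}\,d_g$ before concluding relative compactness of $g_\varepsilon$-balls.
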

\begin{proof}
It is well known that one can construct smooth, symmetric $\left(0,2\right)$-tensor
fields $\tilde{g}_{\varepsilon}\in\mathcal{T}_{2}^{0}(M)$ with $\tilde{g}_{\varepsilon}\to g$
in $\mathcal{C}^{1}$ and locally uniformly bounded second derivatives
by gluing together componentwise convolutions via a partition of unity:
Let $(U_{\alpha},\psi_{\alpha})$ be a (countable) atlas and $\left\{ \chi_{\alpha}\right\} $
a partition of unity subordinate to the $U_{\alpha}$ and choose functions
$\zeta_{\alpha}\in\mathcal{C}^{\infty}(U_{\alpha})$ with compact
support in $U_{\alpha}$ such that $0\leq\zeta_{\alpha}\leq1$ and $\zeta_{\alpha}\equiv1$
on an open neighborhood of $\mathrm{supp}(\chi_{\alpha})$ in $U_{\alpha}$.
Given a locally integrable $(p,q)$-tensor field $T$ we set 
\begin{equation}
\tilde{T}_{\varepsilon}=\sum_{\alpha}\zeta_{\alpha}\cdot\psi_{\alpha}^{*}\left(\left(\tilde{\chi}_{\alpha}\, T^{\alpha}\right)*\rho_{\varepsilon}\right),\label{eq:iota from distribs to gen func on X}
\end{equation}
where $T^{\alpha}\in L_{\mathrm{loc}}^{1}\left(\psi_{\alpha}(U_{\alpha}),\mathbb{R}^{n^{p+q}}\right)$
denotes the chart representation of $T$, $\tilde{\chi}_{\alpha}:=\chi_{\alpha}\circ\psi_{\alpha}^{-1}$
and the convolution is to be understood componentwise. Note that
this construction also ensures that the map $\left(\varepsilon,p\right)\mapsto\tilde{g}_{\varepsilon}(p)$
is smooth. 

Now let $\delta>0$. By locally uniform convergence we get that
for any $K\subset M$ compact, w.l.o.g.\ $K\subset U_{\alpha}$ for
some chart domain $U_{\alpha}$ (otherwise we may cover $K$ by finitely
many of those), there exists $\varepsilon_{K}$ such that 
\begin{multline}
\sup_{p\in K}\sup_{0\neq X,Y\in T_{p}M}\frac{\left|g(X,Y)-\tilde{g}_{\varepsilon}(X,Y)\right|}{\left|X\right|_{g}\left|Y\right|_{g}}\leq\\
\leq\sup_{p\in K}\sup_{0\neq X,Y\in T_{p}M}\frac{\left\Vert \left(g_{ij}-\tilde{g}_{\varepsilon,ij}\right)X^{j}\right\Vert _{e}\,\left\Vert Y\right\Vert _{e}}{\left|X\right|_{g}\left|Y\right|_{g}}\leq nC^{2}\sup_{i,j\leq n}\sup_{p\in K}\left|g_{ij}(p)-\tilde{g}_{\varepsilon,ij}(p)\right|<\delta\label{eq:blubb}
\end{multline}
for all $\varepsilon\leq\varepsilon_{K}$ (here $\left\Vert .\right\Vert _{e}$
denotes the euclidean norm on $\mathbb{R}^{n}$ and we used Cauchy's
inequality, $\left\Vert AX\right\Vert _{e}\leq n\,\max_{i,j\leq n}|A_{ij}|\left\Vert X\right\Vert _{e}$
and that $\frac{\left\Vert X\right\Vert _{e}}{|X|_{g}}<C$, where
$C=\sup_{\{X\in TM|_{K}:|X|_{g}=1\}}\left\Vert X\right\Vert _{e}<\infty$,
for any $X\in TM|_{K}$). But then the globalization lemma \citep[Lem.~2.4]{KSSV}
allows us to construct (new) approximations $g_{\varepsilon}:p\mapsto\tilde{g}_{u(\varepsilon,p)}(p)$
such that for each compact set $K\subset M$ there exists $\varepsilon_{K}$
such that $g_{\varepsilon}(p)=\tilde{g}_{\varepsilon}(p)$ for all
$\varepsilon\leq\varepsilon_{K}$ and $p\in K$ (in particular the $g_{\varepsilon}$
still satisfy $g_{\varepsilon}\to g$ in $\mathcal{C}^{1}$ and have
locally uniformly bounded second derivatives) and such that for each
$\delta>0$ there exists $\varepsilon_{0}(\delta)$ such that $d(g,g_{\varepsilon})<\delta$
for all $\varepsilon\leq\varepsilon_{0}$, i.e., $d(g,g_{\varepsilon})\to0$.

It remains to show completeness and that the $g_{\varepsilon}$ are
Riemannian. This follows from (\ref{eq:d(g,geps) to zero}): For any
$\delta>0$ there exists $\varepsilon_{0}$ such that for all $\varepsilon\leq\varepsilon_{0}$
one has $\left|g(X,X)-g_{\varepsilon}(X,X)\right|<\delta g(X,X)$
for all $X\in TM$, $X\neq0$, hence 
\begin{equation}
\left(1-\delta\right)g(X,X)\leq g_{\varepsilon}(X,X)\leq\left(1+\delta\right)g(X,X).\label{eq:estimate for norm(X) in geps}
\end{equation}
From this it immediately follows that for $\varepsilon$ small enough
positive definiteness of $g$ implies positive definiteness of $g_{\varepsilon}$,
hence the approximations are Riemannian, and it also immediately gives
$\sqrt{1-\delta}L_{g}(\gamma)\leq L_{g_{\varepsilon}}(\gamma)\leq\sqrt{1+\delta}L_{g}(\gamma)$
for any (locally Lipschitz) curve $\gamma$. But this implies that
for $\varepsilon\leq\varepsilon_{0}$ we have 
\begin{equation}
\sqrt{1-\delta}\: d_{g}(p,q)\leq d_{g_{\varepsilon}}(p,q)\leq\sqrt{1+\delta}\ d_{g}(p,q)\label{eq: distance estimates}
\end{equation}
and thus $B_{\varepsilon,p}(r)\subset B_{p}(\frac{r}{\sqrt{1-\delta}})\subset \exp_{p}(\frac{r}{\sqrt{1-\delta}} \cdot \{v\in T_{p}M:\,\left|v\right|_{g}=1\})$
is relatively compact for all $p\in M$ and $r>0$, so $\left(M,g_{\varepsilon}\right)$
is a complete Riemannian manifold by the Hopf-Rinov theorem.
\end{proof}
The next Lemma deals with the Ricci curvature estimate and its proof
is largely analogous to the Lorentzian version shown in \citep[Lem.~3.2]{hawkingc11}
for $\kappa=0$, but a bit less involved.
\begin{lem}
\label{lem:Riem approx ricci}Let $g\in\mathcal{C}^{1,1}$ be a complete
Riemannian metric on $M$ that satisfies $\mathbf{Ric}\geq\left(n-1\right)\kappa g$.
Then there exist smooth approximations $g_{\varepsilon}$ with all
properties of the previous Lemma and such that for any compact $K\subset M$
and $\delta>0$ there exists $\varepsilon_{0}$ such that 
\[
\mathbf{Ric}_{\varepsilon}|_{K}\geq\left(n-1\right)\left(\kappa-\delta\right)g_{\varepsilon}|_{K}
\]
 for any $\varepsilon\leq\varepsilon_{0}$.\end{lem}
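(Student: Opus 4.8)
The plan is to work with the smooth approximations $g_\varepsilon$ furnished by Lemma \ref{lem:riem approx} and to estimate $\mathbf{Ric}_\varepsilon$ directly in a fixed chart covering a (w.l.o.g.\ coordinate) compact set $K$. The key point is that the Ricci tensor is a universal expression (rational with denominators that are powers of $\det g$) in the metric components, their first derivatives and their \emph{second} derivatives. Since $g_\varepsilon = \tilde g_\varepsilon$ on $K$ for $\varepsilon$ small, and on a coordinate patch $\tilde g_\varepsilon = (\chi g)*\rho_\varepsilon$ up to the cutoff $\zeta$ and the gluing (which are locally exact), the bad terms are exactly those involving $\partial^2 \tilde g_\varepsilon$. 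These I would handle by the standard Friedrichs-mollifier trick: writing $\partial_k\partial_l \tilde g_{\varepsilon,ij} = (\partial_k\partial_l(\chi g_{ij}))*\rho_\varepsilon$ makes no sense pointwise (only $\partial g$ is Lipschitz), but one \emph{can} write $\partial_l \tilde g_{\varepsilon,ij} = (\partial_l(\chi g_{ij}))*\rho_\varepsilon$ with $\partial_l(\chi g_{ij})\in\mathcal C^{0,1}$, so the second derivatives are $(\partial_l(\chi g_{ij}))*\partial_k\rho_\varepsilon$, and more importantly the \emph{contracted} combinations of second derivatives that actually appear in $\mathbf{Ric}$ can be compared to the distributional (a.e.-defined, locally bounded) second derivatives of $g$.

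Concretely, the steps are: (1) Fix $K$, w.l.o.g.\ inside one chart, and fix $\delta>0$. (2) Write $\mathbf{Ric}_\varepsilon = \mathcal F(g_\varepsilon, \partial g_\varepsilon, \partial^2 g_\varepsilon)$ with $\mathcal F$ the universal smooth expression, and split off the part linear in $\partial^2 g_\varepsilon$, call it $\mathcal L(g_\varepsilon)[\partial^2 g_\varepsilon]$; the remainder is a continuous function of $(g_\varepsilon,\partial g_\varepsilon)$ only, hence converges uniformly on $K$ to $\mathcal F_{\mathrm{rem}}(g,\partial g)$ as $\varepsilon\to 0$ because $g_\varepsilon\to g$ in $\mathcal C^1$. (3) For the second-derivative part, use that on $K$, $\partial^2 g_{\varepsilon,ij} = (\partial g_{ij})' * \rho_\varepsilon$ (derivative of a Lipschitz function convolved), and that the contraction appearing in $\mathcal L$ equals, after an integration by parts in the convolution, $(\mathcal L(g)[\partial^2 g])*\rho_\varepsilon$ plus commutator terms $[\mathcal L(g_\varepsilon), *\rho_\varepsilon](\ldots)$ of lower order which vanish uniformly on $K$ as $\varepsilon\to 0$. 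Since $\mathcal L(g)[\partial^2 g] = \mathbf{Ric}(g) - \mathcal F_{\mathrm{rem}}(g,\partial g)$ as an $L^\infty_{\mathrm{loc}}$ function, and $\mathbf{Ric}(g) \geq (n-1)\kappa g$ a.e., mollifying preserves this inequality: $(\mathcal L(g)[\partial^2 g])*\rho_\varepsilon \geq (n-1)\kappa\, g*\rho_\varepsilon - \mathcal F_{\mathrm{rem}}(g,\partial g)*\rho_\varepsilon$ a.e., and both right-hand terms converge uniformly on $K$ to $(n-1)\kappa g$ and $\mathcal F_{\mathrm{rem}}(g,\partial g)$ respectively.

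Putting these together: $\mathbf{Ric}_\varepsilon|_K = \mathcal L(g_\varepsilon)[\partial^2 g_\varepsilon] + \mathcal F_{\mathrm{rem}}(g_\varepsilon,\partial g_\varepsilon) \geq (n-1)\kappa\, g*\rho_\varepsilon - o(1)$ uniformly on $K$, and since $g*\rho_\varepsilon \to g$ and $g_\varepsilon\to g$ uniformly on $K$ while $g$ is uniformly positive definite on $K$, for $\varepsilon$ small enough the right-hand side is $\geq (n-1)(\kappa-\delta)g_\varepsilon|_K$. One subtlety worth spelling out is the sign issue when $\kappa<0$: since $g*\rho_\varepsilon$ may differ from $g_\varepsilon$ by $O(\varepsilon)$ and $\kappa$ multiplies it, the error is still $O(\varepsilon)$ and gets absorbed into $\delta$, using that $g_\varepsilon|_K$ is bounded above and below by fixed positive-definite tensors (by \eqref{eq:estimate for norm(X) in geps}).

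I expect the main obstacle to be step (3): making rigorous that the genuinely second-order part of the Ricci expression, after evaluation on the mollified metric, is — up to uniformly small commutator errors — the mollification of the a.e.-defined distributional object that the lower bound controls. This is precisely the content of \citep[Lem.~3.2]{hawkingc11} in the Lorentzian $\kappa=0$ case; the adaptation here requires only tracking the extra $(n-1)\kappa g$ term through the mollification, which commutes with everything since it is first-order in $g$ and $g$ is $\mathcal C^1$ (so $g*\rho_\varepsilon\to g$ in $\mathcal C^0$, even $\mathcal C^1$, on $K$), so genuinely "a bit less involved" than the reference, as the text claims. The remaining bookkeeping — handling the cutoff functions $\zeta_\alpha$, the gluing $u(\varepsilon,p)$ from \citep[Lem.~2.4]{KSSV}, and a finite cover of a general compact $K$ by charts — is routine.
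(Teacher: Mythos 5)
Your proposal is correct and is essentially the same argument as the paper's: the paper also relies on the Friedrichs-commutator estimate $f_\varepsilon(h*\rho_\varepsilon)-(fh)*\rho_\varepsilon\to 0$ (citing \citep[Lem.~3.2]{hawkingc11}) to show $\mathbf{Ric}_\varepsilon-\widetilde{\mathbf{Ric}}_\varepsilon\to 0$ uniformly on compacta, and then uses that mollification preserves positive semi-definiteness of the $L^\infty_{\mathrm{loc}}$ tensor $A:=\mathbf{Ric}-(n-1)\kappa g$ by evaluating on constant coordinate vector fields. The only structural difference is that you explicitly separate the second-derivative part $\mathcal{L}(g_\varepsilon)[\partial^2 g_\varepsilon]$ from the remainder and mollify the pieces individually, whereas the paper treats the whole tensor $A$ at once and simply observes that only the second-order terms need the commutator estimate -- a purely notational reshuffling of the same ideas.
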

\begin{proof}
We first note that 
\begin{equation}
\mathbf{Ric}_{\varepsilon}-\mathbf{\tilde{Ric}}_{\varepsilon}\to0\quad\text{uniformly on compact sets},\label{eq:R-R*rho_eps goes to zero-1}
\end{equation}
where $\mathbf{\tilde{Ric}}_{\varepsilon}$ is defined as in (\ref{eq:iota from distribs to gen func on X}). This is established by the same arguments as in the proof of \citep[Lem.~3.2]{hawkingc11}:
Clearly the only problematic terms are the ones involving second derivatives
of the metric (all other terms converge to the respective ones of
$\mathbf{Ric}$ in $\mathcal{C}^{0}$). Now on every compact
set $g_{\varepsilon}=\tilde{g}_{\varepsilon}$ for $\varepsilon$
small enough by construction, so the terms involving second
derivatives of $g$ are dealt with using a variant of the Friedrichs
lemma, showing that for any $f\in\mathcal{C}^{0}(\mathbb{R}^{n})$
and $g\in L_{\mathrm{loc}}^{\infty}$ the difference $f_{\varepsilon}(h*\rho_{\varepsilon})-(f\, h)*\rho_{\varepsilon}\to0$
if $f_{\varepsilon}\to f$ in $\mathcal{C}^{0}$ (cf. \citep[Lem.~3.2]{hawkingc11}).

Now let $\delta>0$ and $K\subset M$ compact (and w.l.o.g.\ contained
in some chart domain). If we define $A_{\varepsilon}:=\mathbf{Ric}_{\varepsilon}-\left(n-1\right)\kappa g_{\varepsilon}$
and $A:=\mathbf{Ric}-\left(n-1\right)\kappa g$, then clearly also
$A_{\varepsilon}-\tilde{A}_{\varepsilon}\to0$ uniformly on $K$. So for any $X\in TM|_{K}$
\[
\left|A_{\varepsilon}(X,X)-\tilde{A}_{\varepsilon}(X,X)\right|\leq n\, C^{2}|X|_{g}^{2}\,\sup_{i,j\leq n}\sup_{p\in K}\left|A_{\varepsilon,ij}(p)-\tilde{A}_{\varepsilon,ij}(p)\right|\leq\delta\,(n-1)\, g(X,X)
\]
for $\varepsilon$ small (this follows by similar estimates as in
(\ref{eq:blubb})). So if we can show that $\tilde{A}_{\varepsilon}(X,X)\geq0$
for all $X\in TM|_{K}$ the claim follows. By construction $\tilde{A}_{\varepsilon}|_{K}$
is a finite sum of terms of the form $\zeta_{\alpha}\psi_{\alpha}^{*}((\tilde{\chi}_{\alpha}A_{ij})*\rho_{\varepsilon})$
(see (\ref{eq:iota from distribs to gen func on X})) so it suffices
to show that $\left((\tilde{\chi}_{\alpha}A_{ij})*\rho_{\varepsilon}\right)(p)$
is a positive semi-definite matrix for any $p\in\psi_{\alpha}(\mathrm{supp}\zeta_{\alpha})$
(note that $(\tilde{\chi}_{\alpha}A_{ij})*\rho_{\varepsilon}$ is
well defined on an open neighborhood $U$ of $\psi_{\alpha}(\mathrm{supp}\zeta_{\alpha})$
contained in $\psi_{\alpha}(U_{\alpha})$ for $\varepsilon$ small
enough). Now let $p\in\psi_{\alpha}(\mathrm{supp}\zeta_{\alpha})$
and $X_{p}\in\mathbb{R}^{n}$ and let $\tilde{X}$ be the constant
vector field $x\mapsto X_{p}$ on $\psi_{\alpha}(U_{\alpha})$. Then
\[
\left((\tilde{\chi}_{\alpha}A_{ij})*\rho_{\varepsilon}\right)(p)X_{p}^{i}X_{p}^{j}=((\tilde{\chi}_{\alpha}A_{ij}X_{p}^{i}X_{p}^{j})*\rho_{\varepsilon})(p)\geq0
\]
since $x\mapsto\tilde{\chi}_{\alpha}(x)A_{ij}(x)\tilde{X}^{i}(x)\tilde{X}^{j}(x)=\tilde{\chi}_{\alpha}(x)A_{ij}(x)X_{p}^{i}X_{p}^{j}$
is non-negative in $L_{\mathrm{loc}}^{\infty}$ by assumption and $\rho_{\varepsilon}\geq0$.
\end{proof}
These preparations now enable us to show:
\begin{theorem}
[Volume comparison]\label{thm:grmov volume}Let $\left(M,g\right)$
be a complete Riemannian manifold with $g\in\mathcal{C}^{1,1}$ and
$\mathbf{Ric}\geq\left(n-1\right)\kappa\, g$. Then
\[
r\mapsto\frac{\mathrm{vol}B_{p}(r)}{\mathrm{vol}_{\kappa}B^{\kappa}(r)}
\]
is a nonincreasing function on $(0,\infty)$ and $\mathrm{vol}B_{p}(r)\leq\mathrm{vol}_{\kappa}B^{\kappa}(r)$.\end{theorem}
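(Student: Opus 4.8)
The plan is to deduce the $\mathcal{C}^{1,1}$ statement from the classical smooth Bishop--Gromov comparison theorem (Theorem \ref{thm:classical bishop gromov}) by an approximation argument, using Lemma \ref{lem:riem approx} and Lemma \ref{lem:Riem approx ricci}. Fix $p\in M$ and $r>0$, and fix $\delta>0$. By Lemma \ref{lem:Riem approx ricci} there are smooth complete Riemannian metrics $g_\varepsilon\to g$ in $\mathcal{C}^1$ with locally uniformly bounded second derivatives, $d(g,g_\varepsilon)\to0$, and such that on any prescribed compact set $K$ one has $\mathbf{Ric}_\varepsilon|_K\geq(n-1)(\kappa-\delta)g_\varepsilon|_K$ for $\varepsilon$ small. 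To make the curvature bound global I would choose a compact exhaustion and combine the per-compactum statements via the same globalization lemma \citep[Lem.~2.4]{KSSV} already invoked in the proof of Lemma \ref{lem:riem approx}, obtaining (for fixed $\delta$) smooth complete metrics $g_\varepsilon$ satisfying $\mathbf{Ric}_\varepsilon\geq(n-1)(\kappa-\delta)g_\varepsilon$ on all of $M$. Then the classical theorem applies to $(M,g_\varepsilon)$ with lower curvature bound $\kappa-\delta$, giving that
\[
r\mapsto\frac{\mathrm{vol}_{g_\varepsilon}B^{\varepsilon}_p(r)}{\mathrm{vol}_{\kappa-\delta}B^{\kappa-\delta}(r)}
\]
is nonincreasing and bounded above by $1$.

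The next step is to pass to the limit $\varepsilon\to0$ with $\delta$ fixed. The volume of the comparison ball in the model space, $\mathrm{vol}_{\kappa-\delta}B^{\kappa-\delta}(r)$, is an explicit elementary function of $r$ and $\kappa-\delta$, so it is enough to show $\mathrm{vol}_{g_\varepsilon}B^{\varepsilon}_p(r)\to\mathrm{vol}_g B_p(r)$. For this I would use the estimates already established in the proof of Lemma \ref{lem:riem approx}: from (\ref{eq: distance estimates}) the metric balls satisfy $B^{\varepsilon}_p(r)\subset B_p(r/\sqrt{1-\delta_\varepsilon})$ and symmetrically $B_p(r)\subset B^{\varepsilon}_p(r/\sqrt{1-\delta_\varepsilon})$ where $\delta_\varepsilon=d(g,g_\varepsilon)\to0$, and from (\ref{eq:estimate for norm(X) in geps}) the Riemannian volume densities satisfy $(1-\delta_\varepsilon)^{n/2}\,d\mu_g\leq d\mu_{g_\varepsilon}\leq(1+\delta_\varepsilon)^{n/2}\,d\mu_g$ pointwise. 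Combining these, $\mathrm{vol}_{g_\varepsilon}B^\varepsilon_p(r)$ is squeezed between $(1-\delta_\varepsilon)^{n/2}\mu_g\bigl(B_p(r(1-\delta_\varepsilon)^{-1/2})\bigr)$-type lower bounds and $(1+\delta_\varepsilon)^{n/2}\mu_g\bigl(B_p(r(1+\delta_\varepsilon)^{1/2})\bigr)$-type upper bounds; since $r\mapsto\mu_g(B_p(r))$ is monotone and, for Lipschitz metrics, continuous in $r$ (the sphere $\partial B_p(r)$ has measure zero — this uses that $\exp_p$ is still a local diffeomorphism near the injectivity radius region and a standard Sard/coarea-type argument, or can be sidestepped by taking limits along $r$ where continuity holds and using monotonicity of the final quotient), one concludes $\mathrm{vol}_{g_\varepsilon}B^\varepsilon_p(r)\to\mathrm{vol}_g B_p(r)$. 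Hence
\[
r\mapsto\frac{\mathrm{vol}\,B_p(r)}{\mathrm{vol}_{\kappa-\delta}B^{\kappa-\delta}(r)}
\]
is nonincreasing on $(0,\infty)$ and $\leq1$, being a pointwise limit of such functions.

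Finally I let $\delta\to0$. The map $\kappa\mapsto\mathrm{vol}_\kappa B^\kappa(r)$ is continuous (indeed smooth) for each fixed $r$, so $\mathrm{vol}_{\kappa-\delta}B^{\kappa-\delta}(r)\to\mathrm{vol}_\kappa B^\kappa(r)$ as $\delta\to0$, uniformly for $r$ in compact subsets of $(0,\infty)$. Therefore the quotient $r\mapsto \mathrm{vol}\,B_p(r)/\mathrm{vol}_\kappa B^\kappa(r)$ is again a pointwise limit of nonincreasing functions bounded by $1$, hence itself nonincreasing on $(0,\infty)$ with $\mathrm{vol}\,B_p(r)\leq\mathrm{vol}_\kappa B^\kappa(r)$, which is the claim.

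The main obstacle I anticipate is not any single deep point but the bookkeeping of the two nested limits together with the globalization of the Ricci bound: one must be careful that the globalization lemma can be applied for \emph{fixed} $\delta$ to produce one family $g_\varepsilon$ working simultaneously on all compacta, and that the resulting metrics are still complete (which follows as in Lemma \ref{lem:riem approx} from (\ref{eq:estimate for norm(X) in geps}) via Hopf--Rinow). A secondary technical point is the continuity of $r\mapsto\mathrm{vol}_g B_p(r)$, i.e.\ that geodesic spheres are $\mu_g$-null in $\mathcal{C}^{1,1}$ regularity; this is cleanest to argue directly, but if one prefers to avoid it one can run the monotonicity conclusion first on the dense set of radii where continuity is automatic and then extend by monotonicity and right/left limits, since a function that is nonincreasing on a dense set extends uniquely to a nonincreasing function.
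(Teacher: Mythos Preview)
Your overall strategy---approximate, apply smooth Bishop--Gromov, pass to the limit first in $\varepsilon$ and then in $\delta$---is exactly the paper's. The one real gap is your globalization step. You propose to invoke \citep[Lem.~2.4]{KSSV} to upgrade the per-compactum Ricci bound of Lemma~\ref{lem:Riem approx ricci} to a global bound $\mathbf{Ric}_\varepsilon\geq(n-1)(\kappa-\delta)g_\varepsilon$ on all of $M$. But that lemma replaces $g_\varepsilon$ by $p\mapsto g_{u(\varepsilon,p)}(p)$ for a smooth reparametrization $u$, and in the transition regions where $u$ varies with $p$ the second derivatives of the new metric pick up derivatives of $u$; there is no reason the Ricci inequality survives there. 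The globalization lemma is tailored to properties that are pointwise in the \emph{value} of the metric (such as the $d(g,g_\varepsilon)$-estimate in Lemma~\ref{lem:riem approx}), not to properties depending on its $2$-jet. So this step is not just bookkeeping---it does not go through as written.

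The paper avoids globalization altogether: it fixes $0<r_1<r_2<R$, applies Lemma~\ref{lem:Riem approx ricci} only on the single compact set $\overline{B_p(R)}$, and uses that the Bishop--Gromov argument for radii $\leq r_2$ needs the Ricci bound only along minimizing $g_\varepsilon$-geodesics from $p$ of length $\leq r_2$, all of which stay inside $\overline{B_p(R)}$ by the distance estimate~\eqref{eq: distance estimates} (and completeness of $(M,g_\varepsilon)$). This yields the monotonicity inequality for every pair $r_1<r_2$, which is all one needs. Your volume-convergence argument via squeezing is essentially equivalent to the paper's route; the paper obtains $\chi_{B_{\varepsilon,p}(r)}\to\chi_{B_p(r)}$ a.e.\ directly from $d_{g_\varepsilon}\to d_g$ together with $S_p(r)\subset\exp_p(r\cdot\{v\in T_pM:|v|_g=1\})$ having measure zero (since $\exp_p$ is locally Lipschitz in this regularity), and then uses dominated convergence---this is simpler than the Sard/coarea detour you sketch, and it also supplies the continuity of $r\mapsto\mathrm{vol}\,B_p(r)$ that your squeeze needs.
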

\begin{proof}
Let $p\in M$ and $0<r_{1}<r_{2}<R$. Using the approximating metrics
$g_{\varepsilon}$ constructed in Lem.\ \ref{lem:riem approx} and
\ref{lem:Riem approx ricci} we see that for any $\delta>0$ there
exists some $\varepsilon_{0}$ such that $\left(B_{p}(R),g_{\varepsilon}\right)$
(as a submanifold of $M$) satisfies the conditions of the classical
Bishop-Gromov volume comparison (Thm.\ \ref{thm:classical bishop gromov})
with $\mathbf{Ric}_{\varepsilon}\geq\left(n-1\right)\left(\kappa-\delta\right)g_{\varepsilon}$
for all $\varepsilon\leq\varepsilon_{0}$. This gives us
\[
1\geq\frac{\mathrm{vol}_{\varepsilon}B_{p}(r_{1})}{\mathrm{vol}_{\kappa-\delta}B^{\kappa-\delta}(r_{1})}\geq\frac{\mathrm{vol}_{\varepsilon}B_{p}(r_{2})}{\mathrm{vol}_{\kappa-\delta}B^{\kappa-\delta}(r_{2})}.
\]
Now by (\ref{eq: distance estimates}) from the proof of Lem.\ \ref{lem:riem approx}
it follows that $d_{g_{\varepsilon}}(p,q)\to d_{g}(p,q)$ and hence
for any $r>0$ one has that $\chi_{B_{\varepsilon,p}(r)}\to\chi_{B_{p}(r)}$
almost everywhere (because the sphere $S_{p}(r)\subset\exp_{p}(r\cdot\{v\in T_{p}M:\,\left|v\right|_{g}=1\})$,
which has measure zero since $\exp_{p}$ is still locally Lipschitz
and $r\cdot \{v\in T_{p}M:\,\left|v\right|_{g}=1\}\subset T_{p}M$
has measure zero). So by dominated convergence (note that $B_{\varepsilon,p}(r)\subset B_{p}(\frac{r}{\sqrt{1-\delta}})$
by (\ref{eq: distance estimates}) for $\varepsilon$ small, hence
the support of all characteristic functions is contained in a common
compact set) $\mathrm{vol}_{\varepsilon}B_{p}(r)\to\mathrm{vol}B_{p}(r)$
for all $r>0$. Calculating the volumes of balls in the comparison
spaces shows that $\mathrm{vol}_{\kappa-\delta}B^{\kappa-\delta}(r)=c\,\int_{0}^{r}\mathrm{sn}_{\kappa-\delta}(s)^{n-1}ds\to c\,\int_{0}^{r}\mathrm{sn}_{\kappa}(s)^{n-1}ds=\mathrm{vol}_{\kappa}B^{\kappa}(r)$,
where
\[
\mathrm{sn}_{\kappa}(s):=\begin{cases}
\frac{1}{\sqrt{\kappa}}\sin(\sqrt{\kappa}s) & \kappa>0\\
s & \kappa=0\\
\frac{1}{\sqrt{|\kappa|}}\sinh(\sqrt{|\kappa|}s) & \kappa<0,
\end{cases}
\]
for $\delta\to0$. Altogether this proves the theorem.
\end{proof}

\section{\label{sec:The-Lorentzian-case}The Lorentzian case}

In this section the goal is to use volume comparison results (as developed in \citep{TG}) for smooth,
globally hyperbolic spacetimes $M$ with timelike Ricci curvature
bounded from below and containing a spacelike hypersurface $\Sigma$ (satisfying
some additional causality and completeness conditions) that has mean
curvature bounded from above to establish
analogous results for $\mathcal{C}^{1,1}$-metrics. It should be noted
that these conditions are very similar to those of the Hawking singularity
theorem and \citep{TG} includes proofs of this theorem using the
new comparison techniques therein. So one of the motivations of this
paper was to also give an alternative proof of Hawking's singularity
theorem in $\mathcal{C}^{1,1}$-regularity (which was first shown
in \citep{hawkingc11}). This will be done in section \ref{sub:Hawking's-singularity-theorem}.

However, there are some additional difficulties (compared to the Riemannian
result from the previous section) arising due to the metric being Lorentzian: First, one
has to be more careful when choosing approximating metrics and simple
convolution is no longer sufficient since it need not preserve the
causal structure. Here the pioneering work was done by Chru\'{s}ciel
and Grant in \citep{CG}, and from there on causality theory for $\mathcal{C}^{1,1}$
metrics has been developed (see, e.g., \citep{Minguzzi_convexnbhdsLipConSprays,KSSV,hawkingc11}).
Additionally, the concept of global hyperbolicity for continuous metrics
has recently been explored in \citep{clemensGlobHYp}. This will
be helpful in establishing certain results from causality theory for
globally hyperbolic spacetimes with a $\mathcal{C}^{1,1}$-metric
in section \ref{sub:Basic-definitions-and}.

Second, while there is no assumption of (geodesic) completeness needed
for the smooth result, an assumption on the minimal time of existence
of geodesics starting orthogonally to the hypersurface with unit
speed has to be made to ensure that everything plays out in relatively
compact sets.

Third, showing that the volumes of the balls in the approximating
metrics actually converge to the volumes in the $\mathcal{C}^{1,1}$-metric
is a bit more involved and will need a result
regarding the cut locus of $\Sigma$ with respect to the $\mathcal{C}^{1,1}$-metric,
namely that it has measure zero. This will be shown in section \ref{sub:The-cut-locus}.

\subsection{\label{sub:Basic-definitions-and}Basic definitions and results}

Throughout this section $M$ will always be a Lorentzian manifold
with a time orientation. While we will generally assume $\mathcal{C}^{1,1}$
 regularity of the metric, we will often include this assumption
explicitly to highlight its importance (many of our results will be
both well-known in higher and not true, or at least unproven, in lower
regularity). We also fix once and for all a (complete) Riemannian
background metric $h$ on $M$.  

As in, e.g., \citep{BEE96,Chrusciel_causality} we define causal
(timelike) curves to be locally Lipschitz continuous maps $\gamma:I\to M$
($I$ being an interval) with $\dot{\gamma}\neq0$ and $g(\dot{\gamma},\dot{\gamma})\leq0$
($<0$) almost everywhere. A causal curve is called future (past) directed
if $\dot{\gamma}$ is future (past) pointing almost everywhere.

For $p,q\in M$ we write $p\ll q$ if there exists a future directed (f.d.)
timelike curve from $p$ to $q$ and $p\leq q$ if either $p=q$ or
there exists a f.d.\ causal curve from $p$ to $q$. We also define
\begin{align*}
I^{+}(p): & =\left\{ q\in M:\, p\ll q\right\} \\
J^{+}(p): & =\{q\in M:\, p\leq q\}.
\end{align*}
$I^{-}$ and $J^{-}$ are defined analogously. Note that for a $\mathcal{C}^{1,1}$-metric
it does not matter whether one allows Lipschitz causal curves or one
requires causal curves to be piecewise $\mathcal{C}^{1}$ (or even
broken geodesics) in the definition of $I^{+}$ and $J^{+}$ (see
\citep[Thm. 1.27]{Minguzzi_convexnbhdsLipConSprays} or \citep[Cor. 3.10]{KSSV}).
Note also that most results from smooth causality theory carry over
to $\mathcal{C}^{1,1}$-metrics, we refer to \citep{Minguzzi_convexnbhdsLipConSprays,KSSV}
and \citep[Appendix A]{hawkingc11} for an overview.

We will mainly work with globally hyperbolic manifolds and as for
smooth metrics one may use any of the following equivalent properties
as definition.
\begin{prop}
[Global hyperbolicity]\label{prop:glob hyp def+equivalences}Let
$\left(M,g\right)$ be a spacetime with $\mathcal{C}^{1,1}$-metric
$g$. Then the following properties are equivalent:
\begin{enumerate}
\item $\left(M,g\right)$ is causal and for all $p,q\in M$ the set $J(p,q):=J^{+}(p)\cap J^{-}(q)$
is compact,
\item there exists a Cauchy hypersurface $S$ for $M$ (i.e.\ a set $S\subset M$
that is met exactly once by every inextendible timelike curve) and
\item $\left(M,g\right)$ is causal and $C(p,q)$ (the space of equivalence
classes of future directed causal curves from $p$ to $q$ with the
compact-open topology) is compact
\end{enumerate}
If any of these conditions holds, we say that $\left(M,g\right)$
is globally hyperbolic.\end{prop}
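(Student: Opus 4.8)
The plan is to reduce the statement to the corresponding classical equivalences for smooth metrics, which carry over to $\mathcal{C}^{1,1}$-metrics because the necessary pieces of causality theory are by now available in this regularity. Concretely I would invoke three facts, all established in \citep{CG,Minguzzi_convexnbhdsLipConSprays,KSSV,hawkingc11}: (i) $I^{\pm}(p)$ are open and the push-up property holds (so $J^{+}(p)\subseteq\overline{I^{+}(p)}$ and, e.g., $p\ll q\leq r\Rightarrow p\ll r$); (ii) every point has arbitrarily small, themselves globally hyperbolic cylindrical neighborhoods -- the $\mathcal{C}^{1,1}$ substitute for convex normal neighborhoods; and (iii) a limit curve theorem, i.e.\ a sequence of future directed causal curves parametrized by $h$-arclength (for the fixed background metric $h$) that stays in a fixed compact set subconverges uniformly to a future directed causal curve, and uniform limits of causal curves are again causal.

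\emph{(1) $\Leftrightarrow$ (3).} Assume (1) and let $p,q\in M$. Any $\sigma\in C(p,q)$, parametrized by $h$-arclength, stays in the compact set $J(p,q)$ and has $h$-length at most $\operatorname{diam}_{h}(J(p,q))$; hence every sequence in $C(p,q)$ subconverges in the compact-open topology to an element of $C(p,q)$ by (iii), so $C(p,q)$ is compact. Conversely, assume (3). Then $J(p,q)=\bigcup_{\sigma\in C(p,q)}\operatorname{im}\sigma$ is the image of the (continuous) evaluation map on the compact set $C(p,q)\times[0,1]$, hence compact, and causality is assumed outright in (3). This is exactly the classical argument and uses only (iii) and the compact-open topology.

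\emph{(1) $\Leftrightarrow$ (2).} For (1) $\Rightarrow$ (2) one runs Geroch's construction: fixing a finite Borel measure $m$ on $M$ with smooth positive density, the function $\tau(p):=\log\!\big(m(I^{-}(p))/m(I^{+}(p))\big)$ is a continuous Cauchy time function -- continuity of $p\mapsto m(I^{\pm}(p))$ and the fact that $\tau\to\mp\infty$ along inextendible causal curves rest on (i) and on compactness of the diamonds -- and any level set of $\tau$ is a Cauchy hypersurface; in $\mathcal{C}^{1,1}$-regularity (indeed for merely continuous metrics) this is precisely the content of \citep{clemensGlobHYp}. For (2) $\Rightarrow$ (1), the existence of $S$ implies $M$ is (strongly) causal by the standard argument, and for $p,q\in M$ extending causal curves through points of $J(p,q)$ to inextendible ones forces them to meet $S$, which together with (iii) and a standard covering argument shows $J(p,q)$ is compact; both steps are as in the smooth case, cf.\ \citep[Appendix A]{hawkingc11}.

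The only genuinely delicate ingredients, were one to reprove rather than cite, are the $\mathcal{C}^{1,1}$ limit curve theorem and the continuity of the volume function $p\mapsto m(I^{\pm}(p))$ underlying the Cauchy time function; the obstruction is that without a $\mathcal{C}^{2}$-metric one has no normal coordinates, so both must be obtained via the cylindrical neighborhoods of \citep{CG,Minguzzi_convexnbhdsLipConSprays,KSSV}. Since all of this is already in the literature, the proposition is here in effect a citation: \citep{clemensGlobHYp} gives (1) $\Leftrightarrow$ (2) and \citep{KSSV,Minguzzi_convexnbhdsLipConSprays} supply the limit-curve input behind (1) $\Leftrightarrow$ (3).
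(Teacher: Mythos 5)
There is a genuine gap, and it is precisely the one the paper's proof is designed to close. You assert, in the proof of (1)\,$\Rightarrow$\,(3), that a causal curve from $p$ to $q$ parametrized by $h$-arclength and staying in the compact set $J(p,q)$ ``has $h$-length at most $\operatorname{diam}_{h}(J(p,q))$.'' This is false: the $h$-diameter of a set bounds the $h$-\emph{distance} between any two of its points, not the $h$-length of a curve contained in it. A causal curve may wind around inside a compact set for arbitrarily long $h$-arclength unless some causality condition beyond mere causality rules this out (this is exactly what non-total imprisonment, or strong causality, forbids). Without such a bound you have no equicontinuity and Arzel\`a--Ascoli does not apply, so the compactness of $C(p,q)$ does not follow. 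The same obstruction lurks in the (1)\,$\Leftrightarrow$\,(2) step: you invoke Geroch's construction via \citep{clemensGlobHYp}, but S\"amann's theorem for continuous metrics is stated with ``non-totally imprisoning'' replacing ``causal'' in (1) and (3), precisely because merely causal spacetimes with compact diamonds are not known to exclude imprisonment in continuous regularity. Citing that paper therefore does not, by itself, give the stated equivalence.

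The paper's proof takes a different and shorter route which makes this visible. It cites \citep{clemensGlobHYp} for the full equivalence under non-total imprisonment and then concentrates entirely on the only new point for $\mathcal{C}^{1,1}$-metrics: that either (1) or (3), with ``causal,'' already forces $(M,g)$ to be non-totally imprisoning. This is done via a causal ladder argument: compactness of $J(p,q)$ (resp.\ $C(p,q)$) gives that $J^{\pm}(p)$ is closed; the $\mathcal{C}^{1,1}$ exponential map is then used to upgrade this to $J^{\pm}(p)=\overline{I^{\pm}(p)}$ (\citep[Cor.~3.16]{KSSV}), whence the spacetime is distinguishing and reflective, hence strongly causal, hence non-totally imprisoning. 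The exponential-map step is where $\mathcal{C}^{1,1}$ regularity is genuinely used and is unavailable for continuous metrics; your sketch never reaches this point because the imprisonment issue is not acknowledged. If you wish to keep your Geroch-style decomposition, you would first have to supply exactly this ``causal $\Rightarrow$ non-totally imprisoning'' lemma before either the limit-curve argument or S\"amann's volume-function construction can be run.

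A minor additional inaccuracy: you describe cylindrical neighborhoods as ``the $\mathcal{C}^{1,1}$ substitute for convex normal neighborhoods,'' but for $\mathcal{C}^{1,1}$-metrics genuine convex and totally normal neighborhoods do exist (this is one of the main results of \citep{Minguzzi_convexnbhdsLipConSprays,KSSV}); the cylindrical-neighborhood technology is needed only below $\mathcal{C}^{1,1}$.
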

\begin{proof}
In \citep{clemensGlobHYp} it was shown that these are equivalent
even for continuous metrics, if one replaces causality with the slightly
stronger assumption of $\left(M,g\right)$ being non-totally imprisoning.
So it only remains to show that for a $\mathcal{C}^{1,1}$-metric
both (1) and (3) already imply $M$ being non-totally imprisoning.
This follows as for smooth metrics so we will only present a brief
outline: From compactness of $J(p,q)$ (respectively $C(p,q)$) one
obtains that $J^{\pm}(p)$ is closed for all $p$, see \citep[Prop.~3.71]{Minguzzi08thecausal},
respectively \citep[Prop.~3.3]{clemensGlobHYp} (note that the proof
only actually uses compactness of $C(p,q)$). Since $g\in\mathcal{C}^{1,1}$
one can still use the exponential map to show that then already $J^{\pm}(p)=\overline{I^{\pm}(p)}$
(\citep[Cor.~3.16]{KSSV}). Thus $\left(M,g\right)$ is distinguishing
and reflective (\citep[Prop.~3.64 and 3.65]{Minguzzi08thecausal}),
hence strongly causal (Prop.\ 3.41, 3.47 and Thm.\ 3.51 in \citep{Minguzzi08thecausal}
show the existence of a time function and Prop.\ 3.57 gives strong
causality). That strong causality is stronger than non-totally imprisoning follows
again as in the smooth case (see e.g.\ \citep[Lem.~14.13]{ONeill_SRG})
as was already remarked in \citep{PenroseC11inpaper}.\end{proof}
\begin{rem}
The previous proof also shows that for $\mathcal{C}^{1,1}$-metrics
this definition of global hyperbolicity is equivalent to the one in
\citep{clemensGlobHYp}. \end{rem}
\begin{defn}
[Future time separation] Let $p\in M$. Then for $q\in M$ the (future)
time separation to $p$ is defined by 
\begin{equation}
\tau(p,q):=\sup(\left\{ L(\gamma):\gamma\,\text{is a f.d. causal curve form }p\text{ to }q\right\} \cup\{0\}),\label{eq:point time sep}
\end{equation}
where $L(\gamma)$ denotes the Lorentzian arc-length of $\gamma$, i.e., for a curve $\gamma:(t_1,t_2)\to M$ one has $L(\gamma):=\int_{t_1}^{t_2}\sqrt{|g(\dot{\gamma(t)},\dot{\gamma(t)})|}dt$.  Similarly one defines the future time separation to a subset $\Sigma$
by
\begin{equation}
\tau_{\Sigma}(p):=\sup_{q\in\Sigma}\tau(q,p).\label{eq:subset time sep}
\end{equation}

\end{defn}
If $M$ is globally hyperbolic with a continuous metric then any two
causally related points can be connected by a maximizing curve (\citep[Prop.~6.4]{clemensGlobHYp}),
hence the supremum in definition (\ref{eq:point time sep}) is attained,
so $\tau:M\times M\to\left[0,\infty\right]$ is finite-valued. It
is also lower semi-continuous (this holds even if $M$ is not globally
hyperbolic, see \citep[Lem.~A.16]{hawkingc11}). We want to show a
similar statement for the time separation to a subset $\Sigma$. This
requires some additional properties of $\Sigma$ (\citep[Def.~2]{TG}). 
\begin{defn}
[Future causally complete] A subset $\Sigma\subset M$ is called
\emph{future causally complete} (FCC) if for any $p\in J^{+}(\Sigma)$
the set $J^{-}(p)\cap\Sigma$ has compact relative closure in $\Sigma$.\end{defn}
\begin{rem}
\label{rem: J-capSig compact}In a globally hyperbolic manifold the
sets $J^{\pm}(p)$ are closed (\citep[Prop. 3.3]{clemensGlobHYp})
and hence for any FCC subset $\Sigma$ and $p\in J^{+}(\Sigma)$ we
have that $J^{-}(p)\cap\Sigma$ is compact and $\Sigma$ itself is
closed. Furthermore, from \citep[Cor.~3.4]{clemensGlobHYp}, it then
follows that 
\begin{equation}
J^{-}(p)\cap J^{+}(J^{-}(p)\cap\Sigma)\quad\mathrm{is\: compact.}\label{eq:J-capSig compact}
\end{equation}

\end{rem}
As a preparation for Prop.\ \ref{prop: time sep continuous} we prove
the following limit-curve lemma (that will also be needed again later
on), which is a slight modification of Thm.\ 1.5 in \citep{clemensGlobHYp}
(which is in turn based on \citep{Minguzzi_LimitCurveThms}):
\begin{lem}
\label{lem:Limit curve}Let $M$ be globally hyperbolic and $\gamma_{n}:\left[0,1\right]\to M$
be a sequence of causal curves and $K\subset M$ compact such that
$\gamma_{n}\subset K$ for all $n\in\mathbb{N}$. Then there exists
a subsequence $\gamma_{n_{k}}$ that converges ($h$-)uniformly to
a causal curve $\gamma:\left[0,1\right]\to M$ (i.e.\ $\sup_{t\in\left[0,1\right]}d_{h}(\gamma_{n_{k}}(t),\gamma(t))\to0$)
with
\begin{equation}
L(\gamma)\geq\limsup_{k\to\infty}L(\gamma_{n_{k}}).\label{eq:L usc}
\end{equation}
In particular, if the $\gamma_{n}$ are maximizing, then $\gamma$
is as well.\end{lem}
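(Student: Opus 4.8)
The plan is to run the classical limit-curve argument (Arzel\`a--Ascoli plus upper semicontinuity of Lorentzian length), now in the globally hyperbolic $\mathcal{C}^{1,1}$ setting; the only genuinely delicate point is the inequality \eqref{eq:L usc}. First I would normalise parametrisations: since reparametrising a causal curve changes neither its image, nor its causal character, nor its Lorentzian arc-length -- and the conclusion is to be read up to reparametrisation -- I may take each $\gamma_n$ parametrised proportionally to $h$-arc-length on $[0,1]$, so that $|\dot\gamma_n|_h \equiv L_h(\gamma_n) =: \ell_n$. Since $M$ is globally hyperbolic it is strongly causal (Prop.\ \ref{prop:glob hyp def+equivalences} and its proof), and a future inextendible causal curve cannot be imprisoned in a compact set; the standard consequence is that the $h$-lengths of causal curves contained in the fixed compact set $K$ are uniformly bounded, say $\ell_n \le L < \infty$. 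Passing to a subsequence, $\ell_n \to \ell \in [0,L]$. If $\ell = 0$ then $L(\gamma_n) \le \sqrt{C}\,L_h(\gamma_n) \to 0$ with $C := \sup_K\|g\|_h < \infty$, so \eqref{eq:L usc} is trivial (this degenerate case, in which no nondegenerate limit exists, does not arise in the applications to maximising curves between distinct points); hence I may assume $\ell > 0$ and, after a further subsequence, $\ell_n \ge \ell/2$ for all $n$. The $\gamma_n$ are then uniformly $h$-Lipschitz and take values in $K$, so Arzel\`a--Ascoli yields a subsequence $\gamma_{n_k}$ converging $h$-uniformly to a Lipschitz curve $\gamma:[0,1]\to K$.

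Next I would check that $\gamma$ is a future directed causal curve. As each $\gamma_{n_k}$ is future directed causal, $\gamma_{n_k}(s) \le \gamma_{n_k}(t)$ for $s \le t$; since $M$ is globally hyperbolic the causal relation is closed (cf.\ the closedness of $J^{\pm}$ in \citep[Prop.~3.3]{clemensGlobHYp}), so $\gamma(s) \le \gamma(t)$ for $s \le t$. Localising in a convex neighbourhood and using the $\mathcal{C}^{1,1}$ exponential map (as in \citep{KSSV,Minguzzi_convexnbhdsLipConSprays}) one then obtains that at every differentiability point of $\gamma$ the velocity lies in the closed future causal cone -- this is exactly the statement that a $C^0$-uniform limit of causal curves is causal, as already in \citep[Thm.~1.5]{clemensGlobHYp}. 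Nondegeneracy ($\dot\gamma \ne 0$ a.e.) holds because in a chart a future directed causal curve has monotone time coordinate whose total variation controls its $h$-length from below up to a fixed constant; passing to the uniform limit of these monotone functions preserves their (positive) total variation, so $\gamma$ is nonconstant and in fact $\dot\gamma \ne 0$ a.e.

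The heart of the matter -- and the step I expect to be the main obstacle -- is \eqref{eq:L usc}. I would cover the compact image $\gamma([0,1])$ by finitely many coordinate charts on each of which $g$ can be pinched between two constant-coefficient Lorentzian metrics lying arbitrarily close to one another on a slightly larger neighbourhood, so that for $k$ large the relevant pieces of $\gamma_{n_k}$ also lie in these charts. On each chart the Lorentzian arc-length of a causal curve for a constant-coefficient (Minkowski-type) metric is upper semicontinuous under uniform convergence -- this is the reverse triangle inequality, i.e.\ concavity of $v \mapsto \sqrt{-\eta(v,v)}$ on the future cone. Summing the finitely many contributions and then letting the inner and outer Minkowski approximations of $g$ coalesce gives $L(\gamma) \ge \limsup_k L(\gamma_{n_k})$; this is precisely the argument behind \citep[Thm.~1.5]{clemensGlobHYp} (based on \citep{Minguzzi_LimitCurveThms}), the only modification being that here the parameter interval $[0,1]$ is fixed while $\ell_n$ varies, which the rescaling above takes care of. The delicate bookkeeping is to arrange the chart decomposition and the Minkowski pinching uniformly along the whole image of $\gamma$ and to control the shrinking ``exceptional'' subintervals where $\gamma_{n_k}$ might escape a chart, so that all accumulated errors go to zero; everything else (the strong-causality length bound, Arzel\`a--Ascoli, closedness of the causal relation, nondegeneracy) is routine in $\mathcal{C}^{1,1}$-regularity given the results collected in Section \ref{sub:Basic-definitions-and}. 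Finally, if each $\gamma_{n_k}$ is maximising then $L(\gamma_{n_k}) = \tau(\gamma_{n_k}(0),\gamma_{n_k}(1))$, and combining \eqref{eq:L usc} with lower semicontinuity of $\tau$ (\citep[Lem.~A.16]{hawkingc11}) yields $L(\gamma) \ge \limsup_k \tau(\gamma_{n_k}(0),\gamma_{n_k}(1)) \ge \tau(\gamma(0),\gamma(1)) \ge L(\gamma)$, so $\gamma$ is maximising.
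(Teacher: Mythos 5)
Your proof follows the same route as the paper: obtain the limit curve via a Lipschitz bound plus Arzel\`a--Ascoli, invoke upper semi-continuity of the Lorentzian length functional to get \eqref{eq:L usc}, and combine it with lower semi-continuity of $\tau$ to pass the maximizing property to the limit. The one structural difference is that the paper simply cites the relevant results from \citep{clemensGlobHYp} -- the Lipschitz bound from Lem.~2.7, the limit-curve existence from Thm.~1.5, and upper semi-continuity of $L$ from Thm.~6.3 -- whereas you reconstruct the upper semi-continuity step from scratch by covering $\gamma([0,1])$ with charts, sandwiching $g$ between nearby constant-coefficient metrics, and appealing to concavity of $v\mapsto\sqrt{-\eta(v,v)}$ (i.e.\ the reverse triangle inequality / maximality of straight lines in Minkowski space). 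That is in fact the argument underlying the cited theorem, so you are redoing known work rather than diverging. Two small remarks: (i) Your reparametrisation to $h$-arclength-proportional parameter is exactly the point the paper flags parenthetically (``the proof works for any set of such curves with an upper bound on the Lipschitz constants''), and is the right way to make the Lipschitz bound meaningful for arbitrarily parametrised input curves. (ii) Your dismissal of the degenerate case $\ell=0$ is fair -- in that case there is no nondegenerate limit causal curve at all, so strictly the Lemma as stated has an implicit nondegeneracy hypothesis that is automatically met in all of the paper's applications (where the $\gamma_n$ are maximising with lengths bounded away from zero).
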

\begin{proof}
By \citep[Lem.~2.7]{clemensGlobHYp} we get an upper bound on the
Lipschitz constants of the $\gamma_{n}$. And so, since the sequence
must have an accumulation point, the convergence result follows from
Thm.\ 1.5 of \citep{clemensGlobHYp}.

It remains to show (\ref{eq:L usc}) and that $\gamma$ is maximizing
if the $\gamma_{n}$ are. By \citep[Thm.~6.3]{clemensGlobHYp} the
length functional $L:\left\{ \gamma\in\mathcal{C}\left(\left[0,1\right],K\right):\,\gamma\,\text{causal}\right\} \to[0,\infty)$
is upper semi-continuous w.r.t.\ $h$-uniform convergence as defined
above (note that while the statement there only deals with a special
subset of causal curves defined on $\left[0,1\right]$, the proof
works for any set of such curves with an upper bound on the Lipschitz
constants), so $L(\gamma)\geq\limsup L(\gamma_{n_{k}})$. Using this
and lower semi-continuity of $\tau$ (see \citep[Lem.~A.16]{hawkingc11})
gives 
\[
L(\gamma)\geq\limsup L(\gamma_{n_{k}})=\limsup\tau(\gamma_{n_{k}}(0),\gamma_{n_{k}}(1))\geq\tau\left(\gamma(0),\gamma(1)\right),
\]
so $\gamma$ is maximizing.
\end{proof}
For an acausal, spacelike FCC hypersurface in a globally hyperbolic
manifold the following holds (which is shown largely analogous to
the smooth case (\citep[Thm.~2]{TG}), only using Lem.\ \ref{lem:Limit curve}
instead of other limit curve results, we nevertheless include a complete
proof):
\begin{prop}
\label{prop: time sep continuous}Let $\left(M,g\right)$ with $g\in\mathcal{C}^{1,1}$
be globally hyperbolic and let $\Sigma\subset M$ be an acausal, FCC subset.
Then the future time-separation $\tau_{\Sigma}$
to $\Sigma$ is finite-valued and continuous on $M$ and for any $p\in J^{+}(\Sigma)\setminus\Sigma$
there exists $q\in\Sigma$ and a causal curve $\gamma$ from $q$
to $p$ with $\tau_{\Sigma}(p)=\tau(q,p)=L(\gamma)$. Any such maximizing
curve $\gamma$ has to be a (reparametrization of) a geodesic, which
is timelike for $p\in I^{+}(\Sigma)$ and null otherwise. If $\Sigma\subset M$
is, additionally, a spacelike hypersurface, then for $p\in I^{+}(\Sigma)$
any maximizing geodesic has to start orthogonally to $\Sigma$.\end{prop}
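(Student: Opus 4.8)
The plan is to run the smooth argument of \citep[Thm.~2]{TG} almost verbatim, replacing the smooth limit curve theorems by Lemma~\ref{lem:Limit curve} and using the causality toolkit for $\mathcal{C}^{1,1}$-metrics collected in Section~\ref{sub:Basic-definitions-and}: existence of convex neighbourhoods, local existence, uniqueness and maximality of geodesics, the identities $J^{\pm}(p)=\overline{I^{\pm}(p)}$, closedness of the $J^{\pm}(p)$, and the compactness statements of Remark~\ref{rem: J-capSig compact}. First I would dispose of the trivial cases: if $p\notin J^{+}(\Sigma)$, or if $p\in\Sigma$ (here acausality of $\Sigma$ rules out any $q\in\Sigma$ with $q\ll p$), then $\tau_{\Sigma}(p)=0$. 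For $p\in J^{+}(\Sigma)\setminus\Sigma$ I would establish finiteness and the existence of a maximizer at once: pick $q_{n}\in J^{-}(p)\cap\Sigma$ together with future directed causal curves $\gamma_{n}$ from $q_{n}$ to $p$ with $L(\gamma_{n})\to\tau_{\Sigma}(p)$ (this uses only the definition of $\tau_{\Sigma}$, allowing $\tau_{\Sigma}(p)=\infty$ for the moment). Since $q_{n}\in J^{-}(p)\cap\Sigma$, all $\gamma_{n}$ lie in the compact set $J^{-}(p)\cap J^{+}(J^{-}(p)\cap\Sigma)$ of (\ref{eq:J-capSig compact}); after reparametrizing to $[0,1]$, Lemma~\ref{lem:Limit curve} yields a limit causal curve $\gamma$ from $q:=\lim q_{n_{k}}$ (which lies in the compact set $J^{-}(p)\cap\Sigma\subset\Sigma$) to $p$ with $L(\gamma)\ge\limsup_{k}L(\gamma_{n_{k}})=\tau_{\Sigma}(p)$. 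As a causal curve contained in a compact set has finite Lorentzian length (compare with the background metric $h$, using the uniform Lipschitz bound behind Lemma~\ref{lem:Limit curve}), this forces $\tau_{\Sigma}(p)\le L(\gamma)\le\tau(q,p)\le\tau_{\Sigma}(p)$, so all four are equal and finite.

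For continuity, lower semicontinuity is free: $\tau_{\Sigma}=\sup_{q\in\Sigma}\tau(q,\cdot)$ is a supremum of the lower semicontinuous functions $\tau(q,\cdot)$ (\citep[Lem.~A.16]{hawkingc11}). For upper semicontinuity at $p$, I would take $p_{n}\to p$ with $\tau_{\Sigma}(p_{n})\to\ell$ and assume $\ell>0$ (otherwise there is nothing to prove). Then $p_{n}\in I^{+}(\Sigma)$ for large $n$; fixing $r\gg p$ one has $p_{n}\ll r$ eventually, hence $r\in I^{+}(\Sigma)\subset J^{+}(\Sigma)$, so by future causal completeness $C:=\overline{J^{-}(r)\cap\Sigma}$ (closure in $\Sigma$) is compact. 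Taking the maximizers $\gamma_{n}$ from $q_{n}\in\Sigma$ to $p_{n}$ produced by the previous step, one has $q_{n}\le p_{n}\ll r$, so eventually $q_{n}\in C$ and $\gamma_{n}\subset J^{+}(C)\cap J^{-}(r)$, which is compact by the same token as in Remark~\ref{rem: J-capSig compact} (\citep[Cor.~3.4]{clemensGlobHYp}). Lemma~\ref{lem:Limit curve} then produces a limit causal curve from some point of $\Sigma$ to $p$ of length $\ge\ell$, whence $\ell\le\tau_{\Sigma}(p)$; together with lower semicontinuity this gives continuity.

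It remains to analyze a maximizer $\gamma$, i.e.\ a causal curve from $q\in\Sigma$ to $p$ with $L(\gamma)=\tau(q,p)=\tau_{\Sigma}(p)$. Every subsegment of $\gamma$ realizes the time separation of its endpoints, so $\gamma$ is locally maximizing; since convex neighbourhoods exist for $\mathcal{C}^{1,1}$-metrics and inside such a neighbourhood the radial geodesic is, up to reparametrization, the unique maximizer (\citep{KSSV,Minguzzi_convexnbhdsLipConSprays}), $\gamma$ is an unbroken causal geodesic up to reparametrization and, being $\mathcal{C}^{1,1}$, has constant $g(\dot{\gamma},\dot{\gamma})$. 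If $p\in I^{+}(\Sigma)$ then $\tau_{\Sigma}(p)>0$, hence $L(\gamma)>0$ and $\gamma$ is timelike; otherwise $\tau_{\Sigma}(p)=0$, $L(\gamma)=0$ and $\gamma$ is null. Finally, if $\Sigma$ is a spacelike hypersurface and $p\in I^{+}(\Sigma)$, I would parametrize the timelike geodesic $\gamma:[0,T]\to M$ by $g(\dot{\gamma},\dot{\gamma})\equiv-1$ and argue orthogonality by contradiction: if the $T_{q}\Sigma$-component $W$ of $\dot{\gamma}(0)$ were nonzero, it would satisfy $g(W,\dot{\gamma}(0))=g(W,W)>0$, and a variation $\gamma_{s}$ of $\gamma$ with $\gamma_{s}(0)\in\Sigma$, $\gamma_{s}(T)=p$ and initial variation vector $W$ would, by the first variation of Lorentzian arc length (valid in $\mathcal{C}^{1,1}$ since $\gamma\in\mathcal{C}^{1,1}$ and the geodesic equation holds almost everywhere), satisfy $\frac{d}{ds}\big|_{0}L(\gamma_{s})=g(W,\dot{\gamma}(0))>0$; then for small $s>0$ the curve $\gamma_{s}$ would be a timelike curve from $\Sigma$ to $p$ with $L(\gamma_{s})>\tau_{\Sigma}(p)$, contradicting the definition of $\tau_{\Sigma}$. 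Hence $\dot{\gamma}(0)\perp T_{q}\Sigma$.

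The step I expect to be the real obstacle is the upper semicontinuity of $\tau_{\Sigma}$: one must confine the near-maximizing curves to a single compact set before applying Lemma~\ref{lem:Limit curve}, and this is exactly where future causal completeness of $\Sigma$ and the $\mathcal{C}^{1,1}$ compactness results of \citep{clemensGlobHYp} (via Remark~\ref{rem: J-capSig compact}) enter in an essential way. Everything else is a careful transcription of the classical argument, legitimate here only because convex neighbourhoods, local geodesic uniqueness and the limit curve lemma are all available at $\mathcal{C}^{1,1}$ regularity.
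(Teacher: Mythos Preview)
Your proof is correct and follows essentially the same route as the paper: existence of a maximizer via the compact set of Remark~\ref{rem: J-capSig compact} and Lemma~\ref{lem:Limit curve}, upper semicontinuity by trapping the maximizing curves for a sequence $p_n\to p$ in a single compact set (you use $J^{+}(C)\cap J^{-}(r)$ with $r\gg p$, the paper uses the equivalent $J^{-}(p^{+})\cap J^{+}(J^{-}(p^{+})\cap\Sigma)$), and orthogonality from the first variation of arc length. Two small differences are worth noting. For lower semicontinuity you invoke the one-line fact that a pointwise supremum of lower semicontinuous functions is lower semicontinuous; the paper instead gives the explicit neighbourhood $U_{\varepsilon}=I^{+}(\gamma(t_{\varepsilon}))$ obtained by backing off along the maximizer, which is more hands-on but yields the same conclusion. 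For orthogonality you argue by contradiction with an abstract variation, whereas the paper localizes to a chart and writes down the concrete $\mathcal{C}^{2,1}$ variation $\sigma(t,s)=\gamma(t)+(1-t)\alpha(s)$; this explicit construction is what makes the differentiability of $s\mapsto L(\sigma(\cdot,s))$ transparent at $\mathcal{C}^{1,1}$ regularity (note $\gamma$ is actually $\mathcal{C}^{2,1}$, not just $\mathcal{C}^{1,1}$ as you wrote, since the Christoffel symbols are Lipschitz), so if you keep your abstract phrasing you should say a word about how the variation is built.
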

\begin{proof}
If $p\notin I^{+}(\Sigma)$ then $\tau_{\Sigma}(p)=0$. Now let $p\in J^{+}(\Sigma)\setminus\Sigma$.
Then there exists a causal curve $\gamma$ from $p$ to $q\in\Sigma$
and if $p\notin I^{+}(\Sigma)$ then clearly $L(\gamma)\leq\tau_{\Sigma}(p)=0\leq L(\gamma)$.
So assume $p\in I^{+}(\Sigma)$. By definition of $\tau_{\Sigma}$
there exist $q_{n}\in\Sigma$ such that $\tau(q_{n},p)\to\tau_{\Sigma}(p)$.
Since $p\in I^{+}(\Sigma)$ we have $\tau_{\Sigma}(p)>0$ and hence
$\tau(q_{n},p)>0$ for $n$ large, so $q_{n}$ and $p$ are causally
related and can be connected by a maximizing curve $\gamma_{n}$ (see
\citep[Prop.~6.4]{clemensGlobHYp}). Because $q_{n}\in J^{-}(p)\cap\Sigma$,
all the $\gamma_{n}$ are contained in $J^{-}(p)\cap J^{+}(J^{-}(p)\cap\Sigma)$,
which is compact by Rem.\ \ref{rem: J-capSig compact}. Therefore
(after maybe reparametrizing and passing to a subsequence), Lem.\ \ref{lem:Limit curve}
gives a uniform limit curve $\gamma$ that is causal, satisfies $q=\gamma(0)\in\Sigma$
(note that $\Sigma$ is closed by Rem.\ \ref{rem: J-capSig compact})
and $p=\gamma(1)$ and is maximizing, so by upper semi-continuity
of the length functional we get
\[
\tau(p,q)=L(\gamma)\geq\limsup L(\gamma_{n})=\limsup\tau(q_{n},p)=\tau_{\Sigma}(p).
\]
Consequently, $\gamma$ maximizes the distance from $\Sigma$ to $p$
and $\tau_{\Sigma}(p)$ is finite. 

Regarding continuity we show lower and upper semi-continuity separately,
starting with lower semi-continuity. Let $p\in M$. We have to show
that for every $\varepsilon$ there exists a neighborhood $U_{\varepsilon}$
of $p$ such that for all $q\in U_{\varepsilon}$
\[
\tau_{\Sigma}(q)\geq\tau_{\Sigma}(p)-\varepsilon.
\]
If $\tau_{\Sigma}(p)=0$, there is nothing to prove due to non-negativity
of $\tau_{\Sigma}$. Let $\gamma:\left[0,1\right]\to M$ be a causal
curve from $p_{0}\in\Sigma$ to $p$ such that $L(\gamma)=\tau(p_{\text{0}},p)=\tau_{\Sigma}(p)>0$.
Now for any $\varepsilon>0$ there exists $t_{\varepsilon}$ such
that $L(\gamma|_{\left[t_{\varepsilon},1\right]})<\varepsilon$. Then
$U_{\varepsilon}:=I^{+}(\gamma(t_{\varepsilon}))$ is a neighborhood
of $p$ such that for all $q\in U_{\varepsilon}$
\[
\tau_{\Sigma}(q)\geq L(\gamma|_{\left[0,t_{\varepsilon}\right]})=\tau_{\Sigma}(p)-L(\gamma|_{\left[t_{\varepsilon},1\right]})\geq\tau_{\Sigma}(p)-\varepsilon.
\]
Next we show upper semi-continuity, i.e., for every $\varepsilon$
there exists a neighborhood $U_{\varepsilon}$ of $p$ such that for
all $q\in U_{\varepsilon}$
\[
\tau_{\Sigma}(q)\leq\tau_{\Sigma}(p)+\varepsilon.
\]
Assume to the contrary that there exists $\varepsilon>0$ and $p_{n}\to p$
such that
\[
\tau_{\Sigma}(p_{n})>\tau_{\Sigma}(p)+\varepsilon
\]
and let $\gamma_{p_{n}}:\left[0,1\right]\to M$ be causal curves from
$\Sigma$ to $p_{n}$ with $\tau_{\Sigma}(p_{n})=L(\gamma_{p_{n}})$
(such curves exist, since $\tau_{\Sigma}(p_{n})>\tau_{\Sigma}(p)+\varepsilon>0$
and so $p_{n}\in I^{+}(\Sigma)$). Let $p^{+}\in I^{+}(p)$, then
$p_{n}\in J^{-}(p^{+})$ eventually and thus $\gamma_{p_{n}}\subset J^{-}(p^{+})\cap J^{+}(J^{-}(p^{+})\cap\Sigma)$,
which is compact by Rem.\ \ref{rem: J-capSig compact}. So we can
apply Lem.\ \ref{lem:Limit curve} to obtain (after passing to a
subsequence) a curve $\gamma$ from $\Sigma$ to $p=\lim p_{n}$ with
\[
\tau_{\Sigma}(p)\geq L(\gamma)\geq\limsup_{n\to\infty}L(\gamma_{p_{n}})=\limsup_{n\to\infty}\tau_{\Sigma}(p_{n})\geq\tau_{\Sigma}(p)+\varepsilon
\]
which is a contradiction. 

Since causal geodesics are locally maximizing (by \citep[Thm.~6]{Minguzzi_convexnbhdsLipConSprays}),
any maximizing curve must be (a reparametrization of) a geodesic and
if $p\in I^{+}(\Sigma)$ then $\tau_{\Sigma}(p)>0$, so it has to
be timelike. 

Now let $\Sigma$ be an acausal, FCC, spacelike hypersurface. We show
that all timelike geodesics that start in $\Sigma$ and maximize the
distance to $\Sigma$ must start orthogonally: First note that if
$\gamma:\left[0,b\right]\to M$ maximizes the distance then also $\gamma|_{\left[0,\varepsilon\right]}$
must maximize the distance to $\Sigma$, so this is a local question
and we may assume that $M=\mathbb{R}^{n}$, $\Sigma\subset\mathbb{R}^{n}$
is a hypersurface and $\gamma:\left[0,1\right]\to\mathbb{R}^{n}$
is a timelike unit-speed geodesic with $\gamma(0)=0\in\Sigma$ that
maximizes the distance to $\Sigma$. Now for any $v\in T_{0}\Sigma$
we can find a smooth curve $\alpha:\left[0,\varepsilon\right]\to\Sigma$
such that $\dot{\alpha}(0)=v$ and $\alpha(0)=0$. We use this to
define a $\mathcal{C}^{2,1}$ (note that $\gamma$ is a geodesic,
hence $\mathcal{C}^{2,1}$ by the geodesic equation) variation
\begin{align*}
\sigma:\left[0,1\right]\times\left[0,\varepsilon\right] & \to\mathbb{R}^{n}\\
\sigma(t,s) & =\gamma(t)+(1-t)\alpha(s).
\end{align*}
Since $\gamma$ is timelike this is a timelike variation for small
enough $\varepsilon$ and we may use the first variation of arc-length
(see \citep[Prop.~10.2]{ONeill_SRG} and note that $s\mapsto L(\sigma(.,s))$
is still $\mathcal{C}^{1}$) to obtain
\[
0=L'(0)=\int_{0}^{1}g(\ddot{\gamma}(t),(\partial_{s}\sigma)(t,0))dt+g(v,\dot{\gamma}(0))=g(v,\dot{\gamma}(0)).
\]
This shows that $\dot{\gamma}(0)\perp v$ for all $v\in T_{0}\Sigma$,
so $\gamma$ starts orthogonally.
\end{proof}
Note that the part of the proof that shows that $\gamma$ has to start
orthogonally to $\Sigma$ really only works for $p\in I^{+}(\Sigma)$
and not for $p\in J^{+}(\Sigma)$ since in that case one could not
guarantee that the constructed variation consists only of timelike
curves. However, the next remark shows that $J^{+}(\Sigma)\setminus\left(\Sigma\cup I^{+}(\Sigma)\right)=\emptyset$
anyways.
\begin{rem}
\label{rem: FCC things}If $\Sigma$ is an acausal, FCC hypersurface
then actually $J^{+}(\Sigma)\setminus\Sigma=I^{+}(\Sigma)$. The argument
is the same as for smooth metrics: First, any FCC set must be closed
(by Rem.\ \ref{rem: J-capSig compact}) and then \citep[Cor.~14.26]{ONeill_SRG}
shows that $\mathrm{edge}(\Sigma)=\emptyset$. By Prop.\ \ref{prop: time sep continuous}
any $p\in J^{+}(\Sigma)\setminus\left(\Sigma\cup I^{+}(\Sigma)\right)$
is the future endpoint of a lightlike geodesic $\gamma$ starting
in $\gamma(0)\in\Sigma$. Now if $\gamma(0)\notin\mathrm{edge}(\Sigma)$
then for $\varepsilon$ small enough $\gamma(\varepsilon)\in I^{+}(\Sigma)$
(since by definition of $\mathrm{edge}(\Sigma)$ (\citep[Def.~14.23]{ONeill_SRG})
there must exist a $q^{-}\in I^{-}(\gamma(0))$ such that any timelike
curve connecting $q^{-}$ to $\gamma(\varepsilon)$ meets $\Sigma$)
contradicting $p\notin I^{+}(\Sigma)$. But since $\mathrm{edge}(\Sigma)=\emptyset$
this shows that $J^{+}(\Sigma)\setminus\left(\Sigma\cup I^{+}(\Sigma)\right)=\emptyset$.
\end{rem}
Finally, we will specify the curvature conditions, introduced in
\citep[Def.~5]{TG}, $\left(M,g\right)$ has to satisfy for the volume
comparison theorem (Thm.\ \ref{thm: volume}) we are going to show.
\begin{defn}
[Cosmological comparison condition] \label{def: CCC}Let $\kappa,\beta\in\mathbb{R}$.
We say that a spacetime $\left(M,g,\Sigma\right)$ satisfies the cosmological
comparison condition $CCC(\kappa,\beta)$ if
\begin{enumerate}
\item $\Sigma\subset M$ is a smooth, spacelike, acausal, FCC hypersurface
and the mean curvature $H$ of $\Sigma$ satisfies $H\leq\beta$ and
\item $\mathbf{Ric}(X,X)\geq-\left(n-1\right)\kappa\, g(X,X)$ in
$L_{\mathrm{loc}}^{\infty}$ for any local, smooth timelike vector field $X$ (i.e., the timelike Ricci curvature is bounded from below by $\kappa$ in the sense of (\ref{eq:ric est}))
\end{enumerate}
\end{defn}
\begin{rem}
\label{rem: conventions for shape op and mean curv}Following \citep{ONeill_SRG}
our sign conventions regarding the mean curvature are as follows:
Let $\Sigma$ be a spacelike hypersurface and $\mathbf{n}$ be the
f.d.\ timelike unit normal vector field to $\Sigma$. We define the
shape operator $S_{\mathbf{n}}:T\Sigma\to T\Sigma$ of $\Sigma$ by
$S_{\mathbf{n}}(V):=\mathrm{tan}_{\Sigma}\nabla_{V}\mathbf{n}$, where
$\mathrm{tan}_{\Sigma}$ denotes the tangential projection $TM|_{\Sigma}\to T\Sigma$.
Using the shape operator we can write the mean curvature as $H=\mathrm{tr}_{g|_{T\Sigma}}\, S_{\mathbf{n}}$,
where $g|_{T\Sigma}$ denotes the metric on $\Sigma$ induced by $g$.
\end{rem}
Note that even though basically all of the upcoming results (except
for the $\mathcal{C}^{1,1}$ version of Hawking's theorem at the very
end, see Thm.\ \ref{thm:C11 hawking for M not glob hyp}) will additionally
require global hyperbolicity, we choose not to include this in the
definition of the comparison condition.

\subsection{\label{sub:Desired-properties-of}Construction and properties of
the approximating metrics}

We need to establish some properties of suitable approximations for
a $\mathcal{C}^{1,1}$-metric $g$ with a hypersurface $\Sigma$ satisfying
$CCC(\kappa,\beta)$. This is done in the following three Lemmas.
To start with, we use approximations as constructed in \citep{CG},
i.e., we have (using the formulation of \citep[Prop.~2.5]{KSSV}):
\begin{prop}
\label{prop:basic props of approx}Let $\left(M,g\right)$ be a space-time
with a continuous Lorentzian metric, and $h$ some smooth background
Riemannian metric on M. Then for any $\varepsilon>0$, there exist
smooth Lorentzian metrics $\check{g}_{\varepsilon}$ and $\hat{g}_{\varepsilon}$
on $M$ such that $\check{g}_{\varepsilon}\prec g\prec\hat{g}_{\varepsilon}$,
i.e.\
\[
\forall X\in TM:\:\check{g}_{\varepsilon}(X,X)\leq0\implies g(X,X)<0\:\mathrm{and}\: g(X,X)\leq0\implies\hat{g}_{\varepsilon}(X,X)<0,
\]
and $d_{h}(\check{g}_{\varepsilon},g)+d_{h}(\hat{g}_{\varepsilon},g)<\varepsilon$,
where
\[
d_{h}(g_{1},g_{2}):=\sup_{p\in M}\sup_{0\neq X,Y\in T_{p}M}\frac{\left|g_{1}(X,Y)-g_{2}(X,Y)\right|}{\left\Vert X\right\Vert _{h}\left\Vert Y\right\Vert _{h}}.
\]
Moreover, $\check{g}_{\varepsilon}$ and $\hat{g}_{\varepsilon}$
depend smoothly on $\varepsilon$, and if $g\in\mathcal{C}^{1,1}$
then $\check{g}_{\varepsilon}$ and $\hat{g}_{\varepsilon}$ additionally
satisfy
\begin{enumerate}
\item they converge to $g$ in the $\mathcal{C}^{1}$-topology as $\varepsilon\to0$
and
\item the second derivatives are bounded, uniformly in $\varepsilon$, on
compact sets.
\end{enumerate}
\end{prop}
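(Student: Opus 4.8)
The plan is to observe that this is essentially \citep[Prop.~2.5]{KSSV}, which rests on the construction of Chru\'{s}ciel and Grant \citep{CG}, so the proof is a matter of importing that construction; still, I would organise it as follows. The starting point is that opening up (resp.\ narrowing) the light cones of a continuous Lorentzian metric $g$ is easy to do non-smoothly: fixing the background $h$, for any $c>0$ one has $g-c\,h\prec g\prec g+c\,h$, since $g(X,X)\le 0$ forces $(g-c\,h)(X,X)\le -c\,h(X,X)<0$ for $X\ne 0$, and symmetrically for the other inclusion. Thus I would build $\hat g_\varepsilon$ as a smoothing of $g-c(\varepsilon)h$ and $\check g_\varepsilon$ as a smoothing of $g+c(\varepsilon)h$, for a suitable $c(\varepsilon)\to 0$; the task is to smooth these without destroying the strict cone inclusions.

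First I would mollify $g\mp c\,h$ componentwise in charts and glue by a partition of unity, using exactly the operator $T\mapsto\tilde T_\varepsilon$ of (\ref{eq:iota from distribs to gen func on X}) from the proof of Lemma \ref{lem:riem approx}. Since $g\in\mathcal{C}^{1,1}$, the same arguments as there show that the resulting smooth symmetric $(0,2)$-tensor fields converge to $g\mp c\,h$ in $\mathcal{C}^1$, have second derivatives bounded locally uniformly in $\varepsilon$ (using $\partial^2(g_{ij}*\rho_\varepsilon)=(\partial^2 g_{ij})*\rho_\varepsilon$ together with the $L^\infty_{\mathrm{loc}}$-bound on $\partial^2 g_{ij}$ supplied by Rademacher's theorem, plus the smooth contribution of $c\,h$), and depend smoothly on $(\varepsilon,p)$. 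On a fixed compact set, for $\varepsilon$ small the $\mathcal{C}^0$-distance between the mollification and $g\mp c\,h$ can be made smaller than $c$ times the minimum of $h$ on the unit $h$-sphere bundle over that set, and then the smoothed metric still has strictly wider (resp.\ narrower) cones than $g$; an estimate of the type (\ref{eq:estimate for norm(X) in geps}) also shows it is still Lorentzian. Letting $c=c(\varepsilon)\to 0$ and summing the convolution error with the $c(\varepsilon)h$-shift gives $d_h(\cdot,g)\to 0$.

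The one point that is not purely local is that the above threshold for $\varepsilon$ depends on the compact set, so I need a single global choice. Here I would invoke the globalization lemma \citep[Lem.~2.4]{KSSV}, exactly as in Lemma \ref{lem:riem approx}: it produces new approximations $\check g_\varepsilon,\hat g_\varepsilon$ of the form $p\mapsto\tilde g_{u(\varepsilon,p)}(p)$ for a suitable smooth $u$, agreeing with the naive mollification of $g\pm c(\varepsilon)h$ on each compact set for $\varepsilon$ small; they therefore inherit $\mathcal{C}^1$-convergence, the uniform-in-$\varepsilon$ bound on second derivatives, smooth dependence on $\varepsilon$, the cone inclusions $\check g_\varepsilon\prec g\prec\hat g_\varepsilon$, and the estimate $d_h(\check g_\varepsilon,g)+d_h(\hat g_\varepsilon,g)<\varepsilon$ on all of $M$. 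I expect the main obstacle to be precisely this globalization step — reconciling the locally varying smallness requirement on the mollification parameter with smoothness in $\varepsilon$ — which is exactly why the cleanest route is to quote it from \citep{CG,KSSV} rather than redo it by hand.
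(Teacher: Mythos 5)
The paper itself gives no proof of this proposition: it is imported verbatim, with attribution, from \citep[Prop.~2.5]{KSSV}, which in turn is a reformulation of the construction of Chru\'{s}ciel and Grant in \citep{CG}. Your proposal reconstructs the essential structure of that cited proof accurately: shift $g$ by $\pm c\,h$ to produce metrics with strictly narrower and wider light cones respectively; mollify componentwise via a partition of unity exactly as in (\ref{eq:iota from distribs to gen func on X}); observe that on any compact set the cone inclusion survives the mollification once the $d_h$-error is below $c$, since the $c\,h$-shift supplies exactly that margin; and invoke the globalization lemma \citep[Lem.~2.4]{KSSV} to turn the resulting family of local thresholds into a single globally defined smooth family while letting $c(\varepsilon)\to 0$. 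The $\mathcal{C}^1$-convergence and the uniform local bounds on second derivatives under the extra hypothesis $g\in\mathcal{C}^{1,1}$ come from the same mollification estimates as in Lemma \ref{lem:riem approx}, as you say. So this is the same route as the source the paper cites, and you correctly identified it as such.

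One notational slip worth fixing: with the paper's convention, $g'\prec g$ means $g'$ has \emph{narrower} causal cones than $g$, so the correct chain is $g+c\,h\prec g\prec g-c\,h$, not $g-c\,h\prec g\prec g+c\,h$ as written. Indeed your own computation, namely that $g(X,X)\le 0$ forces $(g-c\,h)(X,X)\le -c\,h(X,X)<0$ for $X\ne 0$, is precisely the proof that $g\prec g-c\,h$. Your subsequent assignment (build $\hat g_\varepsilon$ from $g-c\,h$ and $\check g_\varepsilon$ from $g+c\,h$) is consistent with the corrected chain, so this is a typo rather than a conceptual error. Also, the phrase ``$c$ times the minimum of $h$ on the unit $h$-sphere bundle'' is redundant (that minimum is $1$); what you want is simply that the $d_h$-error is less than $c$, which is what makes the cone-margin argument close.
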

Now we show that we may additionally demand the following:
\begin{lem}
\label{lem: FCC}Let $(M,g)$ be globally hyperbolic with $g\in\mathcal{C}^{1,1}$
and let $\Sigma\subset M$ be a smooth acausal, spacelike FCC hypersurface.
Then there exist smooth approximations $g_{\varepsilon}$ such that
the approximations $\left(M,g_{\varepsilon}\right)$ are globally
hyperbolic and $\Sigma\subset M$ is a smooth acausal, spacelike FCC
hypersurface (w.r.t.\ $g_{\varepsilon}$).\end{lem}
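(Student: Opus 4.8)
The plan is to start from the approximating metrics $\check g_\varepsilon \prec g \prec \hat g_\varepsilon$ supplied by Prop.\ \ref{prop:basic props of approx} and to argue that the \emph{wider} metrics $\hat g_\varepsilon$ already do the job (with the usual convention that ``$g_\varepsilon$'' denotes the metric $\hat g_{u(\varepsilon,\cdot)}$ obtained after the globalization procedure, so that $g_\varepsilon$ agrees with $\hat g_\varepsilon$ on any fixed compact set for $\varepsilon$ small). First I would record the two monotonicity facts that make $\hat g_\varepsilon$ convenient: since $g \prec \hat g_\varepsilon$, every $g$-causal vector is $\hat g_\varepsilon$-timelike, hence $J^+_g \subset I^+_{\hat g_\varepsilon} \subset J^+_{\hat g_\varepsilon}$ and likewise for the past, and every $g$-causal curve is $\hat g_\varepsilon$-timelike; conversely the cones of $\hat g_\varepsilon$ shrink down to those of $g$ as $\varepsilon \to 0$ (this is exactly the content of $\hat g_\varepsilon \to g$ in $\mathcal C^1$ together with $\check g_\varepsilon \prec g$). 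These inclusions are the engine for all three assertions.

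\emph{Global hyperbolicity.} Using Prop.\ \ref{prop:glob hyp def+equivalences}, global hyperbolicity of $(M,g)$ gives a Cauchy hypersurface $S$, which we may take to be spacelike and smooth (by the $\mathcal C^{1,1}$ smoothing of Cauchy hypersurfaces; alternatively one argues directly with $S=\Sigma$ if $\Sigma$ happens to be Cauchy, but in general $\Sigma$ need not be). I would then invoke the standard stability result --- already available in $\mathcal C^{1,1}$ through \citep{CG} and used in \citep{hawkingc11} --- that a globally hyperbolic metric admits, arbitrarily close in $\mathcal C^0$, a smooth globally hyperbolic metric with wider cones sharing the same Cauchy hypersurface; concretely, $S$ remains a Cauchy hypersurface for $\hat g_\varepsilon$ once $\varepsilon$ is small. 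Indeed, any inextendible $\hat g_\varepsilon$-timelike curve is in particular an inextendible $\hat g_\varepsilon$-causal curve; one shows it must meet $S$ using that the cone-widening is uniform on the compact sets swept out and that $S$ is Cauchy for the narrower metric $g$. (If one prefers to avoid quoting cone-stability of Cauchy hypersurfaces, one can instead verify condition (1) of Prop.\ \ref{prop:glob hyp def+equivalences}: causality of $\hat g_\varepsilon$ for small $\varepsilon$ follows from cone-narrowing plus stable causality of $g$, and compactness of $J^+_{\hat g_\varepsilon}(p)\cap J^-_{\hat g_\varepsilon}(q)$ follows because this set is sandwiched, for $\varepsilon$ small, inside a $g$-causal diamond slightly enlarged, which is compact by global hyperbolicity of $g$ and closedness of the $g$-causal relation.)

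\emph{$\Sigma$ is acausal, spacelike and FCC for $g_\varepsilon$.} Spacelikeness is immediate and stable: $T_p\Sigma$ is $g$-spacelike, an open condition on the metric in $\mathcal C^0$, so $T_p\Sigma$ is $\hat g_\varepsilon$-spacelike for all $p$ in any fixed compact piece once $\varepsilon$ is small, and the globalization lemma \citep[Lem.~2.4]{KSSV} promotes this to all of $\Sigma$. Acausality does \emph{not} come for free from cone-widening (wider cones make acausality harder), so here I would use that $\Sigma$, being an acausal FCC hypersurface, is a closed achronal edgeless hypersurface (Rem.\ \ref{rem: FCC things}), hence in a globally hyperbolic spacetime it is a partial Cauchy hypersurface; intersecting the Cauchy development argument with the stability of $\Sigma$ as such under $\mathcal C^0$-small, cone-widening perturbations (again \citep{CG,hawkingc11}) gives that $\Sigma$ stays acausal for $\hat g_\varepsilon$. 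For FCC one must show that for $p\in J^+_{\hat g_\varepsilon}(\Sigma)$ the set $J^-_{\hat g_\varepsilon}(p)\cap\Sigma$ has compact closure in $\Sigma$; but $\Sigma$ is already closed, so it suffices to show relative compactness, and here I would use that global hyperbolicity of $(M,\hat g_\varepsilon)$ already forces its Cauchy hypersurfaces --- and more generally its acausal edgeless hypersurfaces --- to be FCC automatically once one has (\ref{eq:J-capSig compact})-type compactness, which is available from Rem.\ \ref{rem: J-capSig compact} applied in $(M,\hat g_\varepsilon)$.

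\emph{Main obstacle.} The delicate point is \emph{acausality}: cone-widening is the wrong direction for preserving acausality, so one cannot simply quote a soft stability statement; the argument must genuinely use that $\Sigma$ is not merely acausal but a closed achronal edgeless hypersurface in a globally hyperbolic spacetime (equivalently a partial Cauchy hypersurface), together with the $\mathcal C^{1,1}$ stability of \emph{this} stronger property under cone-widening perturbations from \citep{CG,hawkingc11}. Everything else (spacelikeness, global hyperbolicity, FCC) then follows by the monotonicity inclusions above plus the globalization lemma \citep[Lem.~2.4]{KSSV} to pass from ``on each compact set'' to ``globally''.
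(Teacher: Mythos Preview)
Your choice of the \emph{wider} family $\hat g_\varepsilon$ is the wrong direction and creates the very difficulties you identify. The paper instead takes $g_\varepsilon$ built from the \emph{narrower} family $\check g_\varepsilon\prec g$, and then all three causal properties are one-line consequences of the cone inclusion: any $g_\varepsilon$-causal curve is $g$-causal (indeed $g$-timelike), so a Cauchy hypersurface for $g$ is automatically a Cauchy hypersurface for $g_\varepsilon$ (giving global hyperbolicity via Prop.~\ref{prop:glob hyp def+equivalences}), $g$-acausality of $\Sigma$ immediately yields $g_\varepsilon$-acausality, and $J^-_{g_\varepsilon}(p)\cap\Sigma\subset J^-_g(p)\cap\Sigma$ gives FCC. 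The only thing that needs a short argument is that $\Sigma$ stays $g_\varepsilon$-spacelike, which follows from $d_h(\check g_\varepsilon,g)\to0$ on compact pieces of $\Sigma$ together with the globalization lemma \citep[Lem.~2.4]{KSSV} (and in fact is already implied by $\check g_\varepsilon\prec g$, since $g(X,X)>0\Rightarrow\check g_\varepsilon(X,X)>0$).

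By contrast, your route genuinely breaks at two places. First, the acausality step: the references \citep{CG,hawkingc11} contain no stability statement saying that a partial Cauchy (closed, achronal, edgeless) hypersurface remains acausal under arbitrary cone-\emph{widening} perturbations, and this is not true without further input---cone widening can create new causal curves joining points of $\Sigma$. Second, your FCC step asserts that in a globally hyperbolic spacetime ``acausal edgeless hypersurfaces'' are automatically FCC; this is false (think of a spacelike hyperboloid in Minkowski space), and Rem.~\ref{rem: J-capSig compact} already \emph{assumes} FCC, so it cannot be used to deduce it. Finally, note that the rest of the paper requires $g_\varepsilon\prec g$ (it is stated explicitly right after this lemma and used e.g.\ in Lem.~\ref{lem:volume convergence} via $I^+_\varepsilon(\Sigma)\subset I^+(\Sigma)$), so even a repaired version of your argument with $\hat g_\varepsilon$ would not fit the downstream applications.
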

\begin{proof}
We first show that we can construct approximations $g_{\varepsilon}$
that retain the properties of the $\check{g}_{\varepsilon}$ from
above but additionally satisfy that $\Sigma$ is $g_{\varepsilon}$
spacelike for $\varepsilon$ small, i.e.\ $g_{\varepsilon}|_{T\Sigma}$
is positive definite. To do this, we show that for every compact set
$K\subset\Sigma$ there exists $\varepsilon_{K}$ such that this holds
for the $\check{g}_{\varepsilon}$ for all $\varepsilon\leq\varepsilon_{K}$
and then apply the globalization lemma (\citep[Lem.~2.4]{KSSV}).
 This gives us metrics $g_{\varepsilon}(p):=\check{g}_{\tilde{\varepsilon}(\varepsilon,p)}(p)$
that satisfy $g_{\varepsilon}|_{K}=\check{g}_{\varepsilon}|_{K}$
for all $\varepsilon\leq\varepsilon_{K}$ and $g_{\varepsilon}|_{T\Sigma}$
is positive definite. Since $g|_{T\Sigma}$ is a Riemannian metric
on $\Sigma$ we have that $g(X,X)=1$ implies $\left\Vert X\right\Vert _{h}\leq C$
for all $X\in T\Sigma|_{K}$ and hence $\sup_{\{X\in T\Sigma|_{K}:g(X,X)=1\}}\check{g}_{\varepsilon}(X,X)-g(X,X)\to0$
by the previous proposition. So $\check{g}_{\varepsilon}(X,X)>c\, g(X,X)>0$
for any nonzero $X\in T\Sigma|_{K}$ for all $\varepsilon$ small
(depending on $K$), showing positive definiteness.

The other properties follow because by the above construction $g_{\varepsilon}\prec g$
(since $g_{\varepsilon}(p)=\check{g}_{\tilde{\varepsilon}(\varepsilon,p)}(p)$
and $\check{g}_{\varepsilon}\prec g$): By Prop.\ \ref{prop:glob hyp def+equivalences},
global hyperbolicity is equivalent to the existence of a Cauchy hypersurface
and by definition any Cauchy hypersurface for $g$ also has to be
a Cauchy hypersurface for any $g'\prec g$. This shows that $\left(M,g_{\varepsilon}\right)$
is globally hyperbolic. Similarly $\Sigma$ being $g$-FCC implies
$g_{\varepsilon}$-FCC and $g$-acausality of $\Sigma$ implies $g_{\varepsilon}$-acausality.
\end{proof}
From now on $g_{\varepsilon}$ will always denote smooth approximating
metrics as constructed above, in particular satisfying Prop.\ \ref{prop:basic props of approx},
Lem.\ \ref{lem: FCC} and $g_{\varepsilon}\prec g$. The next Lemma
shows properties of the Ricci curvature $\mathbf{Ric}_{\varepsilon}$
of this approximations (which is basically \citep[Lem.~3.2]{hawkingc11},
except also explicitly covering the case $\kappa\neq0$, and the proof
proceeds similarly).
\begin{lem}
\label{lem: ricci}Let $g\in\mathcal{C}^{1,1}$ and $h$ be a background
Riemannian metric. Suppose that $\mathbf{Ric}_{g}(X,X)\geq-n\,\kappa\, g(X,X)$
for any local smooth $g$-timelike vector field $X\in\mathfrak{X}(U)$.
Then for any compact set $K\subset M$, $C>0$ and $\delta>0$ there
exists $\varepsilon_{0}=\varepsilon_{0}(K,C,\delta)$ such that
\begin{equation}
\mathbf{Ric}_{\varepsilon}(X,X)\geq(n-1)\,(\kappa-\delta)\quad\forall X\in TM|_{K}\:\mathrm{with}\: g_{\varepsilon}(X,X)=-1\:\mathrm{and}\:\left\Vert X\right\Vert _{h}<C\label{eq:ricci for eps metric}
\end{equation}
 for all $\varepsilon<\varepsilon_{0}$.\end{lem}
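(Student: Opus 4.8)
The plan is to mimic the Riemannian argument from Lemma \ref{lem:Riem approx ricci}, but now working with $g_\varepsilon$-timelike vectors rather than vectors of fixed Riemannian length, so the main new issue is that ``timelike'' is a condition that depends on $\varepsilon$. First I would reduce to a local statement: cover $K$ by finitely many charts, so it suffices to prove the estimate on a single compact $K$ contained in a chart domain $U_\alpha$. Next, exactly as in the proof of Lemma \ref{lem:Riem approx ricci}, I would use the Friedrichs-lemma argument (quoting \citep[Lem.~3.2]{hawkingc11}) to conclude that $\mathbf{Ric}_\varepsilon - \tilde{\mathbf{Ric}}_\varepsilon \to 0$ uniformly on $K$, where $\tilde{\mathbf{Ric}}_\varepsilon$ is the smoothing of the (almost everywhere defined, locally bounded) tensor field $\mathbf{Ric}_g$ built componentwise as in \eqref{eq:iota from distribs to gen func on X}; here one uses that $g_\varepsilon = \check g_\varepsilon$ agrees on $K$ with a convolution-type approximation for $\varepsilon$ small, so the only dangerous terms are those quadratic in the second derivatives of the metric, and these are handled by the variant of the Friedrichs lemma.

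Setting $A := \mathbf{Ric}_g + (n-1)\kappa\, g$ (a locally bounded, symmetric $(0,2)$-tensor field) and $A_\varepsilon := \mathbf{Ric}_\varepsilon + (n-1)\kappa\, g_\varepsilon$, the convergence $g_\varepsilon\to g$ in $\mathcal C^1$ together with the above gives $A_\varepsilon - \tilde A_\varepsilon \to 0$ uniformly on $K$, where $\tilde A_\varepsilon$ is the componentwise smoothing of $A$. Then for $X\in TM|_K$ with $\|X\|_h < C$ one estimates, just as in \eqref{eq:blubb},
\[
\bigl|A_\varepsilon(X,X) - \tilde A_\varepsilon(X,X)\bigr| \leq n\,C'^2\,\|X\|_h^2\,\sup_{i,j\leq n}\sup_{p\in K}\bigl|A_{\varepsilon,ij}(p) - \tilde A_{\varepsilon,ij}(p)\bigr| \leq \tfrac{\delta}{2}\,(n-1)
\]
for $\varepsilon$ small (depending on $K$, $C$, $\delta$), where $C'$ absorbs the $h$-norm versus coordinate-norm comparison on $K$. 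So it remains to show $\tilde A_\varepsilon(X,X) \geq -\tfrac{\delta}{2}(n-1)$ for all such $X$ and all $\varepsilon$ small; equivalently, since $\tilde A_\varepsilon$ is a finite sum of terms $\zeta_\alpha\,\psi_\alpha^*\bigl((\tilde\chi_\alpha A_{ij})*\rho_\varepsilon\bigr)$, it suffices to control $\bigl((\tilde\chi_\alpha A_{ij})*\rho_\varepsilon\bigr)(p)X_p^iX_p^j$ for $p\in\psi_\alpha(\operatorname{supp}\zeta_\alpha)$ and $X_p\in\mathbb R^n$ with bounded norm.

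The key point — and the step I expect to be the main obstacle — is that the curvature bound in the hypothesis only gives $A_{ij}(x)Y^iY^j \geq 0$ for $g$-timelike $Y$, not for all $Y$, so unlike the Riemannian case one cannot simply pull $X_p^iX_p^j$ inside the convolution and be done. The remedy, which follows \citep[Lem.~3.2]{hawkingc11}, is to exploit that $g_\varepsilon\prec g$: if $X_p$ satisfies $g_\varepsilon(X_p,X_p) = -1 < 0$ then $X_p$ is automatically $g$-timelike at $p$, and moreover, by uniform $\mathcal C^0$-convergence $g_\varepsilon\to g$ and compactness, there is a fixed $g$-timelike cone (slightly wider) containing all such $X_p$ uniformly for $p\in K$; one then checks that the constant vector field $\tilde X\colon x\mapsto X_p$ is $g$-timelike on the whole $\varepsilon$-ball around $p$ (shrinking the support radius of $\rho_\varepsilon$ if necessary, using that $g$ is continuous so its lightcones vary continuously). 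With that in hand, $x\mapsto \tilde\chi_\alpha(x)A_{ij}(x)X_p^iX_p^j$ is non-negative in $L^\infty_{\mathrm{loc}}$ on the relevant neighborhood, hence its convolution with $\rho_\varepsilon\geq0$ is non-negative at $p$, giving $\tilde A_\varepsilon(X,X)\geq 0$ (up to the small error already absorbed). Combining this with the error estimate above yields \eqref{eq:ricci for eps metric}. A minor technical point to watch is the normalization: the hypothesis is stated with $g_\varepsilon(X,X) = -1$, so one works with unit $g_\varepsilon$-timelike vectors throughout and the trace is over the induced metric, but the restriction $\|X\|_h < C$ is exactly what makes the finitely many error estimates uniform, so no further subtlety arises there.
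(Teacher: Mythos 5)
Your proposal is correct and follows essentially the same route as the paper: reduce to a chart, invoke the Friedrichs-lemma argument of \citep[Lem.~3.2]{hawkingc11} to get $\mathbf{Ric}_\varepsilon - \tilde{\mathbf{Ric}}_\varepsilon \to 0$ uniformly on $K$, absorb the error term using the bound $\|X\|_h < C$, and conclude $\tilde A_\varepsilon(X,X)\geq 0$ by observing that the constant extension of a $g_\varepsilon$-unit timelike vector with bounded $h$-norm stays $g$-timelike on a small enough ball around $p$ (by uniform continuity of $g$ on $K$), so the curvature hypothesis applies a.e.\ there and the convolution preserves non-negativity. As a side note, your sign choice $A := \mathbf{Ric}_g + (n-1)\kappa\,g$ is the one under which $A(X,X)\geq 0$ actually follows from the $CCC$ hypothesis $\mathbf{Ric}(X,X) \geq -(n-1)\kappa\,g(X,X)$; the paper writes $A := \mathbf{Ric} - (n-1)\kappa\,g$, which appears to be a sign slip.
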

\begin{proof}
Fix $K\subset M$ (w.l.o.g.\ contained in a chart domain), $C>0$
and $\delta>0$. As in the proof of Lem.\ \ref{lem:Riem approx ricci}
we proceed similarly to \citep[Lem.~3.2]{hawkingc11}. By the argument
given there $g_{\varepsilon}-\tilde{g}_{\varepsilon}\to0$ in $\mathcal{C}^{2}$
(note that by construction $g_{\varepsilon}=\check{g}_{\varepsilon}$
on $K$ for $\varepsilon$ small). As in (\ref{eq:R-R*rho_eps goes to zero-1})
we have $\mathbf{Ric}_{\tilde{g}_{\varepsilon}}-\tilde{\mathbf{Ric}}_{\varepsilon}\to0$
uniformly on compact sets and so 
\begin{equation}
\mathbf{Ric}_{\varepsilon}-\tilde{\mathbf{Ric}}_{\varepsilon}\to0\quad\text{uniformly on compact sets.}\label{eq:R-R*rho_eps goes to zero}
\end{equation}

Now we define $A_{\varepsilon}:=\mathbf{Ric}_{\varepsilon}-\left(n-1\right)\kappa g_{\varepsilon}$
and $A:=\mathbf{Ric}-\left(n-1\right)\kappa g$. Clearly
$A_{\varepsilon}-\tilde{A}_{\varepsilon}\to0$ uniformly on compact
sets and thus (for $\varepsilon$ small enough) 
\[
\left|A_{\varepsilon}(X,X)-\tilde{A}_{\varepsilon}(X,X)\right|\leq c\,\sup_{i,j\leq n}\sup_{p\in K}\left|A_{ij}(p)-\tilde{A}_{\varepsilon,ij}(p)\right|<\delta(n-1)
\]
for all $X\in TM|_{K}$ with $\left\Vert X\right\Vert _{h}\leq C$.
So if we can show that $\tilde{A}_{\varepsilon}(X,X)\geq0$ for all
$X\in TM|_{K}$ with $\left\Vert X\right\Vert _{h}\leq C$ and $g_{\varepsilon}(X,X)=-1$
the claim follows. 

As in Lem.\ \ref{lem:Riem approx ricci} it now suffices to show
this for every term of $\tilde{A}_{\varepsilon}$ of the form $\zeta_{\alpha}\psi_{\alpha}^{*}((\tilde{\chi}_{\alpha}A_{ij})*\rho_{\varepsilon})$.
Again we may assume $M=\mathbb{R}^{n}$ and $\tilde{A}_{\varepsilon}=A*\rho_{\varepsilon}$.
Now choose $\varepsilon_{0}$ such that $\left|g_{\varepsilon}(X,X)-g\left(X,X\right)\right|<\frac{1}{2}$
for all $X\in TM|_{K}$ with $\left\Vert X\right\Vert _{h}\leq C$
and all $\varepsilon<\varepsilon_{0}$. Since $g$ is uniformly continuous
on $K$ there exists some $\varepsilon_{0}>r>0$ such that for any
$p,x\in K$ with $\left\Vert x-p\right\Vert _{h}<r$ and any $X_{p}\in T_{p}M=\mathbb{R}^{n}$
with $\left\Vert X_{p}\right\Vert _{h}\leq C$ we have $\left|g(p)(X_{p},X_{p})-g(x)(X_{p},X_{p})\right|<\frac{1}{2}$.
This implies that for any $p\in K$ and $X_{p}\in\mathbb{R}^{n}$
with $\left\Vert X_{p}\right\Vert _{h}\leq C$ and $g_{\varepsilon}(p)(X_{p},X_{p})=-1$
the constant vector field $\tilde{X}:\, x\mapsto X_{p}$ is $g$ timelike
on on the open ball $B_{p}(r)$ and thus by our assumption $A(\tilde{X},\tilde{X})=\mathbf{Ric}(\tilde{X},\tilde{X})-\left(n-1\right)\kappa g(\tilde{X},\tilde{X})\geq0$
almost everywhere on $B_{p}(r)$. So for $\varepsilon<r$ we get
\[
\tilde{A}_{\varepsilon}(p)(X_{p},X_{p})=(A*\rho_{\varepsilon})(p)(X_{p},X_{p})=(A(\tilde{X},\tilde{X})*\rho_{\varepsilon})(p)\geq0,
\]
since $\rho_{\varepsilon}\geq0$ and $\mathrm{supp}\rho_{\varepsilon}\subset B_{0}(\varepsilon)$.\end{proof}
\begin{rem}
Note that the condition $\left\Vert X\right\Vert _{h}<C$ in the inequality
(\ref{eq:ricci for eps metric}) was not necessary in the Riemannian
case (see Lem.\ \ref{lem:Riem approx ricci}) since $g$ itself was
Riemannian, but is vital for Lorentzian metrics and without it, the
result is probably not true: For example, if $M=\mathbb{R}^{3}$ with
$g=\mathrm{diag}(-1,1,1)$ and $g_{\varepsilon}=\mathrm{diag}(-1-\varepsilon x^{2}y^{2}z^{2},1,1)$
then $g_{\varepsilon}\to g$ even in $\mathcal{C}^{\infty}$ and $\mathbf{Ric}_{g}(X,X)\geq0$,
but for $p=(1,1,1)\in M$, $N\in\mathbb{N}$ and any $\varepsilon>0$
there exists $X=X(N,\varepsilon)\in T_{p}M=\mathbb{R}^{3}$ such that
$g_{\varepsilon}(p)(X,X)=-1$ but still
\[
\mathbf{Ric}_{g_{\varepsilon}}(p)(X,X)<-N.
\]
However, these $X(N,\varepsilon)$ do not satisfy $\left\Vert X(N,\varepsilon)\right\Vert _{h}<C$
for any $C>0$ independent of $\varepsilon$ and $N$. A straightforward
calculation gives
\[
\mathbf{Ric}_{g_{\varepsilon}}(p)=\frac{1}{\left(1+\varepsilon\right)^{2}}\left(\begin{array}{ccc}
\left(1+\varepsilon\right)2\varepsilon & 0 & 0\\
0 & -\varepsilon & -\varepsilon\left(2+\varepsilon\right)\\
0 & -\varepsilon\left(2+\varepsilon\right) & -\varepsilon
\end{array}\right).
\]
Now let $X=(x,y,y)\in T_{p}M$ and demand $-1=g_{\varepsilon}(p)(X,X)=\left(-1-\varepsilon\right)x^{2}+2y^{2}$.
Then
\begin{align*}
\mathbf{Ric}_{g_{\varepsilon}}(p)(X,X) & =\frac{2\varepsilon}{\left(1+\varepsilon\right)^{2}}\left[1-(1+\varepsilon)y^{2}\right]
\end{align*}
which diverges to $-\infty$ as $y\to\infty$ for any fixed $\varepsilon$.\end{rem}
\begin{lem}
\label{lem: mean curvature}Let $g\in\mathcal{C}^{1,1}$ and assume
that the mean curvature of $\Sigma\subset M$ is bounded from above
by $\beta$. Then there exist approximations $g_{\varepsilon}$ such
that for any compact set $A\subset\Sigma$ and $\eta>0$ there exists
$\varepsilon_{0}$ such that $H_{\varepsilon}|_{A}<\beta+\eta$
for all $\varepsilon<\varepsilon_{0}$.\end{lem}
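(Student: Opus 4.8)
The plan is to reduce the statement about the mean curvature $H_\varepsilon$ of $\Sigma$ with respect to $g_\varepsilon$ to a $\mathcal{C}^1$-convergence argument, since the mean curvature only involves first derivatives of the metric (together with a choice of unit normal). Concretely, fix a compact set $A\subset\Sigma$, which we may assume is contained in a single chart domain adapted to $\Sigma$ (otherwise cover $A$ by finitely many such pieces and take the smallest $\varepsilon_0$). Using the approximations $g_\varepsilon$ already constructed (Prop.\ \ref{prop:basic props of approx} and Lem.\ \ref{lem: FCC}), we know $\Sigma$ is $g_\varepsilon$-spacelike for $\varepsilon$ small and $g_\varepsilon\to g$ in $\mathcal{C}^1$. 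Let $\mathbf{n}$ be the $g$-future-directed timelike unit normal of $\Sigma$ and $\mathbf{n}_\varepsilon$ the $g_\varepsilon$-future-directed $g_\varepsilon$-unit normal. The first step is to observe that $\mathbf{n}_\varepsilon\to\mathbf{n}$ in $\mathcal{C}^0$ (in fact in $\mathcal{C}^1$) on $A$: in the adapted chart the normal direction is determined algebraically by the condition of being $g_\varepsilon$-orthogonal to $T\Sigma$ and $g_\varepsilon$-unit, and since $g_\varepsilon\to g$ in $\mathcal{C}^1$ and $g|_{T\Sigma}$ is nondegenerate, solving these (smoothly $\varepsilon$-dependent) equations gives $\mathbf{n}_\varepsilon\to\mathbf{n}$ uniformly on $A$ together with its first derivatives.

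The second step is to write $H_\varepsilon$ explicitly. By Rem.\ \ref{rem: conventions for shape op and mean curv}, $H_\varepsilon=\mathrm{tr}_{g_\varepsilon|_{T\Sigma}}S_{\mathbf{n}_\varepsilon}$ where $S_{\mathbf{n}_\varepsilon}(V)=\mathrm{tan}_\Sigma\nabla^\varepsilon_V\mathbf{n}_\varepsilon$, and $\nabla^\varepsilon$ is the Levi-Civita connection of $g_\varepsilon$. In the adapted chart, $H_\varepsilon$ is a fixed smooth (rational) expression in the components of $g_\varepsilon$, of $\partial g_\varepsilon$ (entering through the Christoffel symbols of $g_\varepsilon$ and through $\partial\mathbf{n}_\varepsilon$), of the components of $\mathbf{n}_\varepsilon$, and of the (fixed, $\varepsilon$-independent) parametrization of $\Sigma$. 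Since $g_\varepsilon\to g$ in $\mathcal{C}^1$ on the compact set $A$, the Christoffel symbols $\Gamma^\varepsilon$ converge uniformly on $A$ to the Christoffel symbols of $g$ (which exist and are continuous as $g\in\mathcal{C}^{1,1}$), and combined with $\mathbf{n}_\varepsilon\to\mathbf{n}$ in $\mathcal{C}^1$ and $g_\varepsilon|_{T\Sigma}\to g|_{T\Sigma}$ uniformly with uniformly nondegenerate inverse, we conclude $H_\varepsilon\to H$ uniformly on $A$. Hence there is $\varepsilon_0$ such that $\sup_A|H_\varepsilon-H|<\eta$ for all $\varepsilon<\varepsilon_0$, and therefore $H_\varepsilon|_A\le H|_A+\eta\le\beta+\eta$ on $A$, since $H\le\beta$ by hypothesis. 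This proves the claim.

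The main obstacle is the bookkeeping needed to see that $\mathbf{n}_\varepsilon\to\mathbf{n}$ in $\mathcal{C}^1$ rather than just $\mathcal{C}^0$; the point is that $\mathbf{n}_\varepsilon$ is obtained by a smooth (hence $\mathcal{C}^1$) operation — normalization and orthogonal projection off $T\Sigma$ — applied to $g_\varepsilon$, and smooth operations are continuous for the $\mathcal{C}^1$-topology, so $\mathbf{n}_\varepsilon\to\mathbf{n}$ in $\mathcal{C}^1$ follows from $g_\varepsilon\to g$ in $\mathcal{C}^1$ once one checks the normalization is nonsingular near $\Sigma$, which holds because $g|_{T\Sigma}$ is Riemannian and $\Sigma$ is $g$-spacelike. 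Everything else is the same kind of uniform-convergence-on-compacta estimate already used in the proofs of Lem.\ \ref{lem:riem approx} and Lem.\ \ref{lem: ricci}, and (unlike the Ricci curvature case) no Friedrichs-lemma argument is needed here since no second derivatives of the metric enter the mean curvature.
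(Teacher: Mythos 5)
Your proof is correct and follows essentially the same route as the paper: both reduce the claim to $\mathcal{C}^{1}$-convergence of the unit normal $\mathbf{n}_{\varepsilon}\to\mathbf{n}$ (you via the algebraic characterization, the paper via the explicit formula $\mathbf{n}_{\varepsilon}=\mathrm{grad}_{\varepsilon}f/\left|\mathrm{grad}_{\varepsilon}f\right|_{g_{\varepsilon}}$ for a local defining submersion $f$ of $\Sigma$, which is the same observation), combined with uniform convergence of the Christoffel symbols, to get $H_{\varepsilon}\to H$ uniformly on compact sets. Your closing remark that no Friedrichs-type argument is needed because $H$ involves only first derivatives of the metric is exactly the reason the paper's proof is so short.
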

\begin{proof}
Since $H=\mathrm{tr}_{g|_{T\Sigma}}\, S_{\mathbf{n}}$ (see Rem.\
\ref{rem: conventions for shape op and mean curv}) and the Christoffel
symbols of $g_{\varepsilon}$ converge to those of $g$ uniformly
on compact sets it suffices to show that the the $g_{\varepsilon}$
unit normal vector field $\mathbf{n}_{\varepsilon}$ to $\Sigma$
converges to $\mathbf{n}$ in $\mathcal{C}^{1}$. Because $\Sigma$
is a smooth hypersurface it is locally given as the zero set of a
submersion $f:U\to\mathbb{R}^{n-1}$ and hence 
\begin{equation}
\mathbf{n}_{\varepsilon}=\frac{\mathrm{grad}_{\varepsilon}f}{\left|\mathrm{grad}_{\varepsilon}f\right|_{g_{\varepsilon}}}\to\frac{\mathrm{grad}f}{\left|\mathrm{grad}f\right|_{g}}=\mathbf{n}\label{eq:n als grad f}
\end{equation}
in $\mathcal{C}^{1}$, proving the claim.
\end{proof}
We need two further properties of this approximations.
\begin{prop}
\label{prop: f}Let $K\subset TM$ be compact and $T>0$ such that
all $g$-geodesics starting in $K$ exist for all $t\leq T$. Then
there exists $\varepsilon_{0}>0$ such that for all $\varepsilon_{0}\geq\varepsilon\geq0$
every $g_{\varepsilon}$-geodesic starting in $K$ exists until at
least time $T$ and the function
\begin{align*}
f:\left[0,\varepsilon_{0}\right]\times\left[0,T\right]\times K & \to TM\\
\left(\varepsilon,t,v\right) & \mapsto\dot{\gamma}_{v}^{\varepsilon}(t),
\end{align*}
where $\gamma_{v}^{\varepsilon}$ denotes the $g_{\varepsilon}$-geodesic
with $\dot{\gamma}_{v}^{\varepsilon}(0)=v$, is continuous. \end{prop}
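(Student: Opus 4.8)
The plan is to reduce everything to a statement about ordinary differential equations with parameters and invoke the standard continuous-dependence theorems, once the domain issues are handled. First I would work in local coordinates: the geodesic equation for $g_\varepsilon$ is the second-order ODE $\ddot x^k = -\Gamma^k_{ij}(\varepsilon;x)\dot x^i\dot x^j$, which we rewrite as a first-order system on $TM$ with right-hand side $F(\varepsilon,\cdot)$ depending on the Christoffel symbols $\Gamma^k_{ij}$ of $g_\varepsilon$. By Prop.~\ref{prop:basic props of approx} (items (1) and (2)) the metrics $g_\varepsilon$ converge to $g$ in $\mathcal{C}^1$ with second derivatives bounded uniformly on compact sets; hence the $\Gamma^k_{ij}(\varepsilon;\cdot)$ are continuous in $(\varepsilon,x)$ jointly, converge uniformly on compact sets to the (continuous, since $g\in\mathcal{C}^{1,1}$) Christoffel symbols of $g$ as $\varepsilon\to 0$, and are locally uniformly bounded and locally uniformly Lipschitz in $x$ (uniformly in $\varepsilon$, from the uniform second-derivative bounds). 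So $F$ is a continuous, locally Lipschitz-in-the-space-variable vector field on $[0,\varepsilon_0^{\mathrm{init}}]\times TM$.

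The main obstacle is the domain of existence: a priori the $g_\varepsilon$-geodesic $\gamma_v^\varepsilon$ might escape to infinity before time $T$, and even if it exists we need a \emph{single} $\varepsilon_0$ working for all $\varepsilon\le\varepsilon_0$ and all $v\in K$ simultaneously. The key step is therefore a uniform a priori bound. By hypothesis, for each $v\in K$ the $g$-geodesic $\gamma_v^0$ exists on $[0,T]$; by continuous dependence on initial data for the (single, fixed) vector field $F(0,\cdot)$ and compactness of $K$, the set $\mathcal{K}:=\{\dot\gamma_v^0(t):v\in K,\ t\in[0,T]\}$ is a compact subset of $TM$. Choose a relatively compact open neighbourhood $\mathcal{U}$ of $\mathcal{K}$ in $TM$ with compact closure $\overline{\mathcal{U}}$. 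On the compact set $[0,\varepsilon_0^{\mathrm{init}}]\times\overline{\mathcal{U}}$ the field $F$ is bounded, so solutions starting in $\mathcal{K}$ cannot leave $\overline{\mathcal{U}}$ too fast; combining this with the convergence $F(\varepsilon,\cdot)\to F(0,\cdot)$ uniformly on $\overline{\mathcal{U}}$ and the uniform Lipschitz bound, a standard Gronwall argument shows: there is $\varepsilon_0>0$ so that for every $\varepsilon\le\varepsilon_0$ and every $v\in K$ the solution $\dot\gamma_v^\varepsilon$ stays inside $\overline{\mathcal{U}}$ on $[0,T]$, hence (by the escape-time criterion) exists on all of $[0,T]$. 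Since $TM$ may not be covered by one chart, one applies this argument along finitely many charts covering the compact image $\overline{\mathcal{U}}$, taking the minimum of the finitely many $\varepsilon_0$'s and (if needed) iterating over finitely many subintervals of $[0,T]$ so that each piece of each trajectory stays in one chart; finiteness of $K$'s covering and of the time partition keeps this uniform.

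Finally, once all the relevant solutions exist on $[0,T]$ and remain in the fixed compact set $\overline{\mathcal{U}}$, joint continuity of $f(\varepsilon,t,v)=\dot\gamma_v^\varepsilon(t)$ follows from the standard theorem on continuous dependence of ODE solutions on parameters and initial conditions: the right-hand side $F$ is continuous jointly in $(\varepsilon,\text{state})$ and locally Lipschitz in the state variable uniformly in $\varepsilon$, the initial condition $v\mapsto v\in TM$ depends continuously (in fact is the identity) on $v\in K$, and the time interval $[0,T]$ is fixed; hence $(\varepsilon,t,v)\mapsto\dot\gamma_v^\varepsilon(t)$ is continuous. (Gluing the finitely many chart-wise continuity statements is immediate since they agree on overlaps.) This gives the claimed $\varepsilon_0$ and continuity of $f$, completing the proof. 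I expect the Gronwall/escape-time bookkeeping across charts and time-subintervals to be the only genuinely delicate point; everything else is a direct appeal to Prop.~\ref{prop:basic props of approx} and classical ODE theory.
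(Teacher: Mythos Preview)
Your proposal is correct and follows essentially the same route as the paper: both reduce to standard ODE continuous-dependence results (the paper cites Dieudonn\'e \cite[10.5.6]{Dieudonne_vol1}) using that the Christoffel symbols are locally Lipschitz, converge locally uniformly as $\varepsilon\to 0$, and have uniform Lipschitz bounds; and both handle the global statement by covering the $g$-trajectories with finitely many charts and iterating the local estimate over finitely many time-subintervals. Your version is organized slightly differently---you first package all $g$-trajectories into one compact set $\mathcal{K}$ and work inside a single neighbourhood $\overline{\mathcal{U}}$, whereas the paper fixes $v\in K$ and argues along $\dot\gamma_v$---but this is a cosmetic difference and the substance is identical.
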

\begin{proof}
This follows from a local argument using a standard result on the
comparison of solutions to ODE (\citep[10.5.6 and 10.5.6.1]{Dieudonne_vol1}):
Note that the $\Gamma_{g,ij}^{k}$ are locally Lipschitz continuous,
the $\Gamma_{\varepsilon,ij}^{k}(p)$ depend smoothly on $\varepsilon$
and $p$ for $\varepsilon>0$ and $\Gamma_{\varepsilon,ij}^{k}\to\Gamma_{g,ij}^{k}$
locally uniformly for $\varepsilon\to0$. Given any $v\in K$ we choose chart domains $U_i \subset TM$ ($i=0,\dots ,m$) covering $\dot{\gamma}_v([0,T])$ and times $t_i$ such that $\dot{\gamma}_v([t_i,t_{i+1}])\subset U_i$. Let $k^i$ be an upper bound for the Lipschitz constants of the derivatives of $g$ and $g_\varepsilon$ in $U_i$ and $\alpha^i$ be chosen such that $|\Gamma_{\varepsilon,lj}^{k}-\Gamma_{g,lj}^{k}|<\alpha^i$ on $U_i$. Then
by \citep[10.5.6 and 10.5.6.1]{Dieudonne_vol1} for any $\tilde v\in U_0\cap K$ with $\left\Vert v^0-\tilde v^0 \right\Vert _{e}<\mu^0$ one has that for $\mu^0$ and $\alpha^0$ sufficiently small $\dot{\gamma}^{\varepsilon,0}$ exists until at least $t_1$ and  $\left\Vert \dot{\gamma}^0_v(t)-\dot{\gamma}^{\varepsilon,0}_{\tilde v}(t)\right\Vert _{e} \leq \mu e^{tk}+\alpha \frac{e^{tk}-1}{k}$ for all $t\in[0,t_1]$. Continuing this in $U_1$ (with initial data $\dot{\gamma}_v(t_1)$ and $\dot{\gamma}^{\varepsilon}_{\tilde v}(t_1)$, which will be close if $v$ and $\tilde v$ were), $U_2$ and so forth gives the claim.
\end{proof}
\begin{defn}
[Unit normal bundle] We write $S^{+}N\Sigma$ (or sometimes also
$S_{0}^{+}N_{0}\Sigma$) for the (future) unit normal bundle to $\Sigma$,
i.e.\, 
\[
S^{+}N\Sigma:=\left\{ v\in TM|_{\Sigma}:\, v\, f.p.,\, g(v,w)=0\,\forall w\in T_{\pi(v)}\Sigma\:\text{and}\, g(v,v)=-1\right\} \subset TM|_{\Sigma}
\]
and analogously $S_{\varepsilon}^{+}N_{\varepsilon}\Sigma$ for the
(future) unit normal bundle to $\Sigma$ w.r.t.\ the metric $g_{\varepsilon}$.
For any $A\subset\Sigma$ we further define $S^{+}NA\equiv S_{0}^{+}N_{0}A:=\left\{ v\in S^{+}N\Sigma:\,\pi(v)\in A\right\} $
and analogously $S_{\varepsilon}^{+}N_{\varepsilon}A$.
\end{defn}
For compact $A\subset\Sigma$ each $S_{\varepsilon}^{+}N_{\varepsilon}A$
is compact for any $\varepsilon\geq0$ (since the respective future
pointing unit normal vector fields $\mathbf{n}_{\varepsilon}$ are
continuous and $S_{\varepsilon}^{+}N_{\varepsilon}A=\mathbf{n}_{\varepsilon}(A)$
by definition). The following lemma shows that this remains true for
their union over $0\leq\varepsilon\leq\varepsilon_{0}$.
\begin{lem}
\label{lem: starting vectors ompact}Let $A\subset\Sigma$ be compact.
Then for any neighborhood $U$ of $S^{+}NA$ in $TM|_{\Sigma}$ there
exists $\varepsilon_{0}(U,A)>0$ such that 
\[
\bigcup_{0\leq\varepsilon\leq\varepsilon_{0}}S_{\varepsilon}^{+}N_{\varepsilon}A\subset U\subset TM|_{\Sigma}
\]
and is compact.\end{lem}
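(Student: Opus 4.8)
The plan is to exhibit $\bigcup_{0\le\varepsilon\le\varepsilon_0}S_\varepsilon^+N_\varepsilon A$ as the continuous image of a compact parameter set. Recall (as noted just before the statement) that $S_\varepsilon^+N_\varepsilon A=\mathbf{n}_\varepsilon(A)$, where $\mathbf{n}_\varepsilon$ is the future pointing $g_\varepsilon$-unit normal field along $\Sigma$, with the convention $g_0:=g$ and $\mathbf{n}_0:=\mathbf{n}$. By Lem.~\ref{lem: FCC} there is $\varepsilon_0'>0$ such that $\Sigma$ is spacelike for $g_\varepsilon$ whenever $\varepsilon\le\varepsilon_0'$, so $\mathbf{n}_\varepsilon$ is defined on all of $\Sigma$ for these $\varepsilon$ and we may set
\[
N\colon[0,\varepsilon_0']\times A\longrightarrow TM|_\Sigma,\qquad N(\varepsilon,p):=\mathbf{n}_\varepsilon(p).
\]
Granting that $N$ is continuous, the lemma follows at once: for any $\varepsilon_0\le\varepsilon_0'$ the set $\bigcup_{0\le\varepsilon\le\varepsilon_0}S_\varepsilon^+N_\varepsilon A=N([0,\varepsilon_0]\times A)$ is compact as the continuous image of a compact set; and since we may assume $U$ open (a neighborhood of $S^+NA$ contains an open one), the preimage $N^{-1}(U)$ is open in $[0,\varepsilon_0']\times A$ and contains $\{0\}\times A$, because $N(\{0\}\times A)=\mathbf{n}(A)=S^+NA\subset U$. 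By the tube lemma (using compactness of $A$) it therefore contains $[0,\varepsilon_0]\times A$ for some $\varepsilon_0>0$, which is exactly the claimed inclusion.

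It thus remains to check the joint continuity of $N$, which is the only real point. Cover the compact set $A$ by finitely many chart domains $U_i$ on which $\Sigma$ appears as the zero set of a submersion $f_i\colon U_i\to\mathbb{R}^{n-1}$; as in \eqref{eq:n als grad f} one has, on $U_i$,
\[
N(\varepsilon,p)=\pm\,\frac{\mathrm{grad}_{\varepsilon}f_i(p)}{\left|\mathrm{grad}_{\varepsilon}f_i(p)\right|_{g_\varepsilon(p)}},
\]
with the sign determined (locally constantly in $(\varepsilon,p)$, since the future cones vary continuously) by the time orientation. Now $(\varepsilon,p)\mapsto g_\varepsilon(p)$ is continuous on $[0,\varepsilon_0']\times M$: for $\varepsilon>0$ this is built into the construction (the approximations depend smoothly on $\varepsilon$, cf.\ Prop.~\ref{prop:basic props of approx} and \citep[Lem.~2.4]{KSSV}), and continuity at $\varepsilon=0$ is precisely the locally uniform ($\mathcal{C}^1$, in particular $\mathcal{C}^0$) convergence $g_\varepsilon\to g$. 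Hence $(\varepsilon,p)\mapsto(g_\varepsilon(p))^{-1}$ and $(\varepsilon,p)\mapsto\mathrm{grad}_\varepsilon f_i(p)=(g_\varepsilon(p))^{-1}df_i(p)$ are continuous, and since $\Sigma$ is \emph{uniformly} $g_\varepsilon$-spacelike over the compact set $A$ for $\varepsilon\le\varepsilon_0'$ (this is what the proof of Lem.~\ref{lem: FCC} yields on compacta), $\mathrm{grad}_\varepsilon f_i$ is $g_\varepsilon$-timelike there with $g_\varepsilon$-norm bounded away from $0$; therefore the displayed quotient is continuous in $(\varepsilon,p)$. Patching the finitely many local expressions gives continuity of $N$ on $[0,\varepsilon_0']\times A$.

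The main obstacle is exactly this continuity across the boundary value $\varepsilon=0$: one needs that the normalization in \eqref{eq:n als grad f} does not degenerate, which is guaranteed by Lem.~\ref{lem: FCC} forcing $\Sigma$ to stay (uniformly on compacta) spacelike for the approximating metrics, and that $(\varepsilon,p)\mapsto g_\varepsilon(p)$ remains continuous at $\varepsilon=0$ even after the globalization step — both already arranged in the preceding lemmas. Everything else is a routine tube-lemma/compactness argument.
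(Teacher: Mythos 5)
Your proof is correct and takes essentially the same approach as the paper's: both write $\bigcup_{0\le\varepsilon\le\varepsilon_0}S_\varepsilon^+N_\varepsilon A$ as the image of $[0,\varepsilon_0]\times A$ under the map $(\varepsilon,p)\mapsto\mathbf{n}_\varepsilon(p)$ and deduce the claim from its joint continuity, which the paper also grounds in the gradient-quotient formula (\ref{eq:n als grad f}). You simply spell out the details that the paper leaves implicit, namely the tube-lemma step for the inclusion in $U$ and the verification that continuity survives at $\varepsilon=0$.
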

\begin{proof}
By definition of the unit normal bundles $S_{\varepsilon}^{+}N_{\varepsilon}A$
we have $\bigcup_{0\leq\varepsilon\leq\varepsilon_{0}}S_{\varepsilon}^{+}N_{\varepsilon}A=n([0,\varepsilon_{0}],A)$
where $n:[0,1]\times\Sigma\to TM|_{\Sigma}$ is defined by $n(\varepsilon,p):=\mathbf{n}_{\varepsilon}(p)$,
so the assertion follows from continuity of $n$ (which in turn follows
directly from (\ref{eq:n als grad f})).  
\end{proof}

\subsection{\label{sub:The-cut-locus}The cut locus of $\Sigma$ has measure
zero}

As a further preparation we will now show that for an acausal, spacelike,
FCC hypersurface $\Sigma$ in a globally hyperbolic spacetime with
$\mathcal{C}^{1,1}$-metric the (future) cut locus $\mathrm{Cut}^{+}(\Sigma)\subset M$
has measure zero. This will be vital in the proof of Lem.\ \ref{lem:volume convergence}.
\begin{defn}
[Cut function]Let $\left(M,g\right)$ with $g\in\mathcal{C}^{1,1}$
be globally hyperbolic and $\Sigma\subset M$ be an acausal, spacelike,
FCC hypersurface. The function 
\begin{align*}
s_{\Sigma}^{+}:S^{+}N\Sigma & \to\bar{\mathbb{R}}\\
s_{\Sigma}^{+}(v) & :=\sup\left\{ t>0:\,\tau_{\Sigma}(\gamma_{v}(t))=L(\gamma_{v}|_{\left[0,t\right]})\right\} 
\end{align*}
is called the cut function. 
\end{defn}
We first show measurability of the cut function. 
\begin{lem}
[Measurability of the cut function]\label{lem: cut function measurable}The
cut function is measurable with respect to the completion $\mathcal{B}_{\mu_{g}}$
of the Borel-$\sigma$-algebra of $S^{+}N\Sigma$ w.r.t.\ the measure
$\mu_{g}$ induced by the metric $g$.\end{lem}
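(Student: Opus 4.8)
The plan is to show that $s_{\Sigma}^{+}$ is a pointwise limit (or more precisely a suitable liminf/limsup) of functions that are manifestly measurable, using the characterization of the cut value in terms of the time separation function $\tau_{\Sigma}$, which is continuous by Proposition \ref{prop: time sep continuous}. Concretely, I would first fix the observation that for $v\in S^{+}N\Sigma$ the geodesic $\gamma_{v}$ is maximizing on $[0,t]$ if and only if $\tau_{\Sigma}(\gamma_{v}(t))=t$ (since $\gamma_v$ has unit speed, $L(\gamma_v|_{[0,t]})=t$), and that the set of such $t$ is an interval $[0,s_{\Sigma}^{+}(v)]$ or $[0,s_{\Sigma}^{+}(v))$ — this monotonicity is the standard fact that a subsegment of a maximizing segment is maximizing, which holds here exactly as in the smooth case. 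Hence $s_{\Sigma}^{+}(v)=\sup\{t>0:\tau_{\Sigma}(\gamma_{v}(t))=t\}$ with the convention $\sup\emptyset=0$ if $v$ is nowhere maximizing past $0$ (which does not actually occur since geodesics are locally maximizing, but one need not dwell on this).

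Next I would introduce, for each fixed rational $q>0$ (or each $t$ in a fixed countable dense set of times up to which the relevant geodesics exist — one can work on compact pieces of $S^{+}N\Sigma$ and use Proposition \ref{prop: f} to get a uniform existence time), the function $v\mapsto\tau_{\Sigma}(\gamma_{v}(t))$. This is a composition of the continuous map $(t,v)\mapsto\gamma_{v}(t)=\pi(f(0,t,v))$ from Proposition \ref{prop: f} with the continuous function $\tau_{\Sigma}$ from Proposition \ref{prop: time sep continuous}, hence jointly continuous in $(t,v)$. Therefore the set $G:=\{(t,v): \tau_{\Sigma}(\gamma_{v}(t))=t\}$ is closed in its domain, and $s_{\Sigma}^{+}(v)$ is the supremum of the slice $G_{v}$. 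I would write $s_{\Sigma}^{+}(v)=\sup_{t\in\mathbb{Q}_{>0}} t\cdot\mathbf{1}_{G}(t,v)$ using density of the rationals together with the interval structure and continuity: if $\gamma_v$ is maximizing up to $s_{\Sigma}^{+}(v)$ then by continuity of $t\mapsto\tau_\Sigma(\gamma_v(t))-t$ and the interval property it is maximizing up to every rational $t<s_{\Sigma}^{+}(v)$, so the rational supremum equals $s_{\Sigma}^{+}(v)$. Each $v\mapsto t\cdot\mathbf{1}_{G}(t,v)$ is upper semicontinuous (since $G_{t}:=\{v:(t,v)\in G\}$ is closed), hence Borel measurable, and a countable supremum of Borel measurable functions is Borel measurable; a fortiori it is measurable with respect to the completion $\mathcal{B}_{\mu_g}$.

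The only mildly delicate points are bookkeeping ones rather than genuine obstacles: first, $S^{+}N\Sigma$ is only a $\mathcal{C}^{0}$-submanifold of $TM|_\Sigma$ in this regularity (the normal $\mathbf{n}$ is merely continuous since $g\in\mathcal{C}^{1,1}$), so I would be careful to state that $s_{\Sigma}^{+}$ is measurable with respect to the Borel $\sigma$-algebra of $S^{+}N\Sigma$ as a topological space, which is all that is claimed; and second, the domain of the map $(t,v)\mapsto\gamma_v(t)$ is an open set (geodesics need not be complete), so I would cover $S^{+}N\Sigma$ by countably many compact sets $A_j$ and apply Proposition \ref{prop: f} on each to get a uniform existence time $T_j>0$, noting that $s_{\Sigma}^{+}$ is measurable iff its restriction to each $A_j$ is, and that for $t\ge$ the existence time one can simply not include that $t$ in the supremum (the geodesic having stopped being defined forces $s_{\Sigma}^{+}(v)$ to be at most the existence time anyway, or one argues the cut time is always finite on a globally hyperbolic FCC background — but this is not needed for measurability). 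The main conceptual input is thus just the joint continuity furnished by Propositions \ref{prop: f} and \ref{prop: time sep continuous} together with the interval structure of the set of maximizing parameters; everything else is the standard measurability-of-a-countable-sup argument.
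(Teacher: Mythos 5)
Your approach is correct but genuinely different from the paper's, and in fact yields a stronger conclusion. The paper defines the set-valued map $F(v):=\{t:(v,t)\in\mathcal{D},\ \tau_\Sigma(\gamma_v(t))=L(\gamma_v|_{[0,t]})\}$, shows $\mathrm{graph}(F)$ is Borel by exactly the same continuity argument you give, and then feeds this into a measurable-selection/projection theorem (Prop.\ \ref{prop:measurable sup}, which invokes the Suslin-space projection theorem from the appendix). That machinery only produces measurability with respect to the \emph{universal} $\sigma$-algebra, hence the completion $\mathcal{B}_{\mu_g}$ in the statement. You instead exploit the monotone structure of the set of maximizing parameters --- the reverse triangle inequality $\tau_\Sigma(\gamma_v(t))\geq\tau_\Sigma(\gamma_v(s))+(t-s)$ together with $\tau_\Sigma(\gamma_v(s))\geq s$ shows $G_v=\{t>0:(t,v)\in G\}$ is an interval, so $s_\Sigma^+(v)=\sup_{t\in\mathbb{Q}_{>0}}t\cdot\mathbf{1}_G(t,v)$. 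This reduces the problem to a countable supremum of Borel-measurable functions and thus gives \emph{Borel} measurability of $s_\Sigma^+$, bypassing the projection theorem entirely. What the paper's route buys is that it works without the interval observation (you only need $\mathrm{graph}(F)$ Borel); what your route buys is a sharper conclusion with more elementary tools.

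Two small inaccuracies in your write-up, neither of which affects the outcome. First, the slice $G_t=\{v:(t,v)\in G\}$ is \emph{not} in general closed in $S^+N\Sigma$: $G$ is only relatively closed in the open flow domain $\mathcal{D}$, so $G_t$ is the intersection of a closed set with the open set $\{v:(t,v)\in\mathcal{D}\}$ and its boundary may contain vectors $v$ whose geodesic fails to exist up to time $t$. This costs you the claimed upper semicontinuity, but not Borel measurability: $G$ is Borel (closed-in-open), hence its slices are Borel, and that is all the countable-sup argument needs. Second, the unit normal $\mathbf{n}$ is $\mathcal{C}^{1,1}$ (not merely $\mathcal{C}^0$): it is built from $g$ and a smooth defining submersion, not from the Christoffel symbols, so $S^+N\Sigma$ is in fact a $\mathcal{C}^{1,1}$-submanifold of $TM|_\Sigma$. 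Again this is harmless for a measurability statement. The point about uniform existence times from Prop.\ \ref{prop: f} is also a bit of a red herring here --- one does not need a positive lower bound on existence times, only that the domain $\mathcal{D}$ of the geodesic flow is open and that $(t,v)\mapsto\gamma_v(t)$ is continuous on it, which is standard ODE theory for Lipschitz right-hand sides; the paper's own proof uses exactly this.
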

\begin{proof}
To begin with we rewrite the cut function in a form that makes it
possible to use Prop.\ \ref{prop:measurable sup}. Define the set-valued
map $F:S^{+}N\Sigma\to\mathcal{P}(\mathbb{R})$ by
\[
F(v):=\left\{ t\in\mathbb{R}:\,\left(v,t\right)\in\mathcal{D}\,\mathrm{and}\,\tau_{\Sigma}(\gamma_{v}(t))=L(\gamma_{v}|_{\left[0,t\right]})\right\} ,
\]
where $\mathcal{D}$ denotes the maximal domain of definition of the
flow of the (normal) exponential map. Note that $\mathcal{D}$ is
open. Then (using Prop.\ \ref{prop: time sep continuous})
\[
s_{\Sigma}^{+}(v)=\sup\left\{ t:\, t\in F(v)\right\} .
\]
Since $\mathbb{R}$ is a Suslin space (see Ex.\ \ref{ex: R suslin})
and $f=\mathrm{pr}_{\mathbb{R}}:S^{+}N\Sigma\times\mathbb{R}\to\mathbb{R}$
is continuous (so in particular measurable) it only remains to show
that
\[
\mathrm{graph}(F)=\left\{ \left(v,t\right)\in\mathcal{D}:\,\tau_{\Sigma}(\gamma_{v}(t))=L(\gamma_{v}|_{\left[0,t\right]})\right\} 
\]
is measurable. This in turn follows immediately if we can show that
both the map $\left(v,t\right)\mapsto\tau_{\Sigma}(\gamma_{v}(t))$
and $(v,t)\mapsto L(\gamma_{v}|_{\left[0,t\right]})$ are continuous
on $\mathcal{D}$. The first continuity follows from continuity of
$\tau_{\Sigma}$ on $M$ (see Lem.\ \ref{prop: time sep continuous})
and continuity of $\left(v,t\right)\mapsto\gamma_{v}(t)$ on $\mathcal{D}$
(by continuous dependence of ODE solutions on the initial data). For
the second one, note that $\left(v,t\right)\mapsto\int_{0}^{t}g(\gamma_{v}(\tau))(\dot{\gamma}_{v}(\tau),\dot{\gamma}_{v}(\tau))d\tau$
is continuous because the integrand is. \end{proof}
\begin{rem}
Note that $\mathcal{B}_{\mu_{g}}$ is actually \ independent of $g$:
If $\tilde{g}$ is any other semi-Riemannian metric on $M$
then $\mathcal{B}_{\mu_{g}}=\mathcal{B}_{\mu_{\tilde{g}}}$because
the Borel sets of measure zero are the same for $\mu_{g}$ and $\mu_{\tilde{g}}$
as locally any such measure is given by the Lebesque measure multiplied
by a positive function (cf.\ \citep[16.22.2]{vol3_Dieudonne}).

Also, for smooth metrics measurability is a direct consequence of
lower semi-continuity of the cut function, but the proof of lower
semi-continuity uses the characterization of the cut points as either
conjugate points or meeting points of two maximizing geodesics (see,
e.g., \citep[Prop.~9.7]{BEE96}), which one does not have in the $\mathcal{C}^{1,1}$
case and it is unclear whether lower semi-continuity even remains
true for $\mathcal{C}^{1,1}$-metrics.\end{rem}
\begin{defn}
[Cut locus]The tangential (future) cut locus is defined as
\[
\mathrm{Cut}_{T}^{+}(\Sigma):=\left\{ s_{\Sigma}^{+}(v)v:\, v\in S^{+}N\Sigma\:\:\mathrm{and}\,\left(v,s_{\Sigma}^{+}(v)\right)\in\mathcal{D}\right\} \subset N\Sigma.
\]
The (future) cut locus is defined as the image of the tangential
cut locus under the normal exponential map:
\[
\mathrm{Cut}^{+}(\Sigma):=\exp^{N}(\mathrm{Cut}_{T}^{+}(\Sigma)).
\]
\end{defn}
\begin{prop}
\label{prop:cut locus has measure zero}Let $\left(M,g\right)$ with
$g\in\mathcal{C}^{1,1}$ be globally hyperbolic and $\Sigma\subset M$
be an acausal, spacelike, FCC hypersurface. Then the future cut locus
$\mathrm{Cut}^{+}(\Sigma)\subset M$ has measure zero.\end{prop}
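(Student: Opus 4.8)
The plan is to reduce the statement, using the measurability of the cut function established in Lemma~\ref{lem: cut function measurable}, to two elementary facts that remain valid in $\mathcal C^{1,1}$-regularity: that a measurable real-valued function has a null graph, and that locally Lipschitz maps between equidimensional manifolds preserve sets of measure zero. In the smooth case one instead invokes lower semicontinuity of $s_\Sigma^+$ to realize the tangential cut locus as the graph of a nice (semicontinuous) function; as observed in the remark following Lemma~\ref{lem: cut function measurable}, that route is not available here, which is why the following slightly roundabout argument is needed.

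First I would pass to polar coordinates on the future part of the punctured normal bundle via
\[
\Phi : S^+N\Sigma \times (0,\infty) \longrightarrow N\Sigma, \qquad \Phi(v,t)=t\,v,
\]
which is a $\mathcal C^{1,1}$-diffeomorphism onto the (open) set of future-pointing, nonzero $g$-normal vectors to $\Sigma$, since the future unit normal field $\mathbf n$ is $\mathcal C^{1,1}$ and $S^+N\Sigma\cong\Sigma$ via $\mathbf n$. Restricting to the measurable set $\{v: 0<s_\Sigma^+(v)<\infty\}$ one has $\mathrm{Cut}_T^+(\Sigma)\subset\Phi\big(\mathrm{graph}\,s_\Sigma^+\big)$. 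By Lemma~\ref{lem: cut function measurable} the function $s_\Sigma^+$ is $\mathcal B_{\mu_g}$-measurable, so both $\{(v,t):t<s_\Sigma^+(v)\}$ and $\{(v,t):t\le s_\Sigma^+(v)\}$ lie in the product $\sigma$-algebra $\mathcal B_{\mu_g}(S^+N\Sigma)\otimes\mathcal B(\mathbb R)$, and hence so does $\mathrm{graph}\,s_\Sigma^+$; since $\mu_g$ on $S^+N\Sigma\cong\Sigma$ and Lebesgue measure on $\mathbb R$ are $\sigma$-finite, Tonelli's theorem yields $(\mu_g\otimes\lambda)(\mathrm{graph}\,s_\Sigma^+)=\int\lambda(\{s_\Sigma^+(v)\})\,d\mu_g(v)=0$. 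As $\Phi$ is locally Lipschitz it carries this null set to a null set, so $\mathrm{Cut}_T^+(\Sigma)$ has measure zero in the $n$-dimensional manifold $N\Sigma$ --- a notion independent of the charts used, since locally Lipschitz coordinate changes preserve Lebesgue-null sets.

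Finally I would push this forward under the normal exponential map. Since $g\in\mathcal C^{1,1}$ has locally Lipschitz Christoffel symbols, the geodesic flow depends locally Lipschitz-continuously on its initial data (by the ODE comparison estimates already used in Prop.~\ref{prop: f}), so $\exp^N$ is locally Lipschitz on its domain $\mathcal D\supset\mathrm{Cut}_T^+(\Sigma)$ (compare the use of the local Lipschitz continuity of $\exp_p$ in the proof of Theorem~\ref{thm:grmov volume}). A locally Lipschitz map between two $n$-dimensional manifolds sends sets of measure zero to sets of measure zero, whence $\mathrm{Cut}^+(\Sigma)=\exp^N(\mathrm{Cut}_T^+(\Sigma))$ has measure zero in $M$, as claimed. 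The only genuinely delicate ingredient is Lemma~\ref{lem: cut function measurable}; granting it, the point to watch is that mere measurability of $s_\Sigma^+$, with no semicontinuity, still forces its graph to be null --- which is exactly what Tonelli supplies once $\sigma$-finiteness is in place --- after which the two push-forwards by the locally Lipschitz maps $\Phi$ and $\exp^N$ complete the argument.
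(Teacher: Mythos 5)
Your proof is correct and takes essentially the same route as the paper's: identify $S^+N\Sigma\times(0,\infty)$ with $N\Sigma\setminus\{0\}$, use measurability of $s_\Sigma^+$ together with a Fubini/Tonelli argument to show the graph (and hence $\mathrm{Cut}_T^+(\Sigma)$) is null, and push the null set through the locally Lipschitz normal exponential map. The only cosmetic difference is that you unpack inline the two appendix propositions the paper cites (measurability of the graph of a measurable function, and that such a graph is a product-null set), and you are slightly more explicit that the identification $\Phi(v,t)=tv$ is locally Lipschitz and hence transfers null sets — a point the paper leaves implicit.
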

\begin{proof}
First note that $S^{+}N\Sigma\times(0,\infty)\cong N\Sigma\setminus\left\{ 0\right\} $
via $\left(v,t\right)\mapsto tv$. Using this identification we have
\[
\mathrm{Cut}_{T}^{+}(\Sigma)=\left\{ \left(v,s_{\Sigma}^{+}(v)\right):v\in S^{+}N\Sigma\:\:\mathrm{and}\,\left(v,s_{\Sigma}^{+}(v)\right)\in\mathcal{D}\right\} =\mathrm{graph}(s_{\Sigma}^{+})\cap\mathcal{D}.
\]
So from measurability of the cut function (Lem.\ \ref{lem: cut function measurable})
and Prop.\ \ref{prop: graph is measurable} and Prop.\ \ref{prop: graph has measure zero}
from the appendix, we obtain that the tangential cut locus $\mathrm{Cut}_{T}^{+}(\Sigma)\subset N\Sigma$
has measure zero.

Now the normal exponential map $\exp^{N}:N\Sigma\to M$ is a locally
Lipschitz continuous map from the $n-1+1=n$-dimensional manifold
$N\Sigma$ to the $n$-dimensional manifold $M$, hence its chart
representations (with relatively compact domains) can be extended
to Lipschitz continuous maps from $\mathbb{R}^{n}\to\mathbb{R}^{n}$.
Using a compact exhaustion $K_{n}$ of $N\Sigma$ and covering
each $K_{n}$ by finitely many charts (with relatively compact domains)
we see from Prop.\ \ref{prop: L stetig maps zero measure to zero} that $\exp^{N}(K_{n}\cap\mathrm{Cut}_{T}^{+}(\Sigma))$ has
measure zero. Thus $\mathrm{Cut}^{+}(\Sigma)=\bigcup_{n}\exp^{N}(K_{n}\cap\mathrm{Cut}_{T}^{+}(\Sigma))$
has measure zero.
\end{proof}

\subsection{\label{sub:The-comparison-manifolds}The comparison manifolds}

For any given $\kappa,\beta$ we use the comparison manifolds $M_{\kappa,\beta}$
defined in \citep[Sec.~4.2]{TG}. To make this work more self contained
we will briefly review their definition and properties.

These comparison manifolds were constructed to satisfy $CCC(\kappa,\beta)$
with equality in both the Ricci as well as the mean curvature estimates
and are given by certain warped products $M_{\kappa,\beta}=\left(a_{\kappa,\beta},b_{\kappa,\beta}\right)\times N_{\kappa,\beta}$
for $0\in(a_{\kappa,\beta},b_{\kappa,\beta})\subset\mathbb{R}$, where
$N_{\kappa,\beta}$ is the $(n-1)$-dimensional simply connected Riemannian manifold
with constant sectional curvature of $0$, $1$, or $-1$, depending
on $\kappa$ and $\beta$ (so either $\mathbb{R}^{n-1}$, the unit
sphere $S^{n-1}$, or hyperbolic space $H^{n-1}$), with metric
\[
g_{\kappa,\beta}=-dt^{2}+f_{\kappa,\beta}(t)^{2}h,
\]
where $h$ denotes the standard Riemannian metric on $N_{\kappa,\beta}$
and $f_{\kappa,\beta}:\left(a_{\kappa,\beta},b_{\kappa,\beta}\right)\to\mathbb{R}$
is a positive smooth function. 

The warping functions for each pair $\kappa,\beta$ are summarized
in the table below, which is based on \citep[Table 1]{TG}, but we
use that the mean curvature $H_{0}$ of the hypersurface $\Sigma_{\kappa,\beta}:=\{0\}\times N_{\kappa,\beta}\subset M_{\kappa,\beta}$
is equal to $\beta$ to express their constant $b$ in terms of $\beta$
and we also include the respective constants $b_{\kappa,\beta}$ that
specify the upper bound of the interval where $f_{\kappa,\beta}^{2}>0$.
Also note that our base manifold is assumed to be $n$-dimensional
(whereas it is $(n+1)$-dimensional in \citep{TG}) and for notational
simplicity some of the $f_{\kappa,\beta}$ listed in Table \ref{tab:Warping-functions-for}
are strictly negative instead of positive. (Of course one may replace
them with $-f_{\kappa,\beta}$ to obtain positive warping functions.)
\begin{table}
\begin{centering}
\begin{tabular}{cccccc}
$\kappa$ & $\beta$ & $N_{\kappa,\beta}$ & $b$ & $f_{\kappa,\beta}(t)$ & $b_{\kappa,\beta}$\tabularnewline
\midrule
\midrule 
$\kappa<0$ & $\frac{\left|\beta\right|}{(n-1)\sqrt{\left|\kappa\right|}}<1$ & $S^{n-1}$ & $\tanh^{-1}(\frac{\beta}{(n-1)\sqrt{\left|\kappa\right|}})$ & $\frac{1}{\sqrt{\left|\kappa\right|}}\cosh(\sqrt{\left|\kappa\right|}t+b)$ & $\infty$\tabularnewline
\midrule
 & $\frac{\left|\beta\right|}{(n-1)\sqrt{\left|\kappa\right|}}=1$ & $\mathbb{R}^{n-1}$ & $0$ & $\exp(\mathrm{sgn}(\beta)\sqrt{\left|\kappa\right|}t)$ & $\infty$\tabularnewline
\midrule
 & $\frac{\beta}{(n-1)\sqrt{\left|\kappa\right|}}>1$ & $H^{n-1}$ & $\coth^{-1}(\frac{\beta}{(n-1)\sqrt{\left|\kappa\right|}})$ & $\frac{1}{\sqrt{\left|\kappa\right|}}\sinh(\sqrt{\left|\kappa\right|}t+b)$ & $\infty$\tabularnewline
\midrule
 & $\frac{\beta}{(n-1)\sqrt{\left|\kappa\right|}}<-1$ & $H^{n-1}$ & $\coth^{-1}(\frac{\beta}{(n-1)\sqrt{\left|\kappa\right|}})$ & $\frac{1}{\sqrt{\left|\kappa\right|}}\sinh(\sqrt{\left|\kappa\right|}t+b)$ & $-\frac{b}{\sqrt{\left|\kappa\right|}}$\tabularnewline
\midrule
\midrule 
$\kappa=0$ & $\beta=0$ & $\mathbb{R}^{n-1}$ & $0$ & $1$ & $\infty$\tabularnewline
\midrule
 & $\beta>0$ & $H^{n-1}$ & $\frac{n-1}{\beta}$ & $t+b$ & $\infty$\tabularnewline
\midrule
 & $\beta<0$ & $H^{n-1}$ & $\frac{n-1}{\beta}$ & $t+b$ & $-\frac{n-1}{\beta}$\tabularnewline
\midrule
\midrule 
$\kappa>0$ & $\beta\in\mathbb{R}\setminus\{0\}$ & $H^{n-1}$ & $\cot^{-1}(\frac{\beta}{(n-1)\sqrt{\kappa}})$ & $\frac{1}{\sqrt{\kappa}}\sin(\sqrt{\kappa}t+b)$ & $\frac{-b+\frac{\pi}{2}(1+\mathrm{sgn}(\beta))}{\sqrt{\kappa}}$\tabularnewline
\midrule
 & $\beta=0$ & $H^{n-1}$ & $\frac{\pi}{2}$ & $\frac{1}{\sqrt{\kappa}}\cos(\sqrt{\kappa}t)$ & $\frac{\pi}{2\sqrt{\kappa}}$\tabularnewline
\bottomrule
\end{tabular}
\par\end{centering}

\medskip{}

\caption{\label{tab:Warping-functions-for}Warping functions for different
values of $\kappa,\beta$.}
\end{table}

Since $\lim_{t\nearrow b_{\kappa,\beta}}f_{\kappa,\beta}(t)=0$ if
$b_{\kappa,\beta}<\infty$ we may define continuous functions $\tilde{f}_{\kappa,\beta}:[0,\infty)\to\mathbb{R}$
by extending $f_{\kappa,\beta}$ by zero if necessary, so 
\begin{equation}
\tilde{f}_{\kappa,\beta}(t):=\begin{cases}
f_{\kappa,\beta}(t) & t<b_{\kappa,\beta}\\
0 & t\geq b_{\kappa,\beta}.
\end{cases}\label{eq:def of f tilde}
\end{equation}
We now investigate some of the circumstances under which convergence
of $\kappa\nearrow\kappa_{0}$ and $\beta\searrow\beta_{0}$ (since
the approximating metrics we will use satisfy Lem.\ \ref{lem: ricci}
and Lem.\ \ref{lem: mean curvature}) implies pointwise convergence
of the corresponding $\tilde{f}_{\kappa,\beta}$ or at least of the
functions $t\mapsto\frac{\tilde{f}_{\kappa,\beta}(t)}{f_{\kappa,\beta}(0)}$.
Clearly the map $\left(\kappa,\beta\right)\mapsto\tilde{f}_{\kappa,\beta}$
is continuous (w.r.t.\ pointwise convergence) on $\left(\mathbb{R}_{>0}\times\mathbb{R}\setminus\{0\}\right)\cup\{(\kappa,\beta)\in\mathbb{R}^{2}:\,\kappa<0\,\mathrm{and}\,\left|\beta\right|\neq(n-1)\sqrt{|\kappa|}\}$.
For the remaining points, simple calculations show the following:
\begin{lem}
\label{lem:convergence of f tilde}If either
\begin{enumerate}
\item $\kappa_{0}=0$, $\beta_{0}\neq0$ and $\kappa\nearrow0$ and $\beta\to\beta_{0}$,
\item $\kappa_{0}=0$, $\beta_{0}=0$ and $\kappa\nearrow0$ and $\beta:=(n-1)\sqrt{|\kappa|}\searrow0$,
or
\item $\kappa_{0}>0$, $\beta_{0}=0$ and $\kappa\to\kappa_{0}$ and $\beta\searrow0$,
\item $\kappa_{0}<0,$ $\beta_{0}=(n-1)\sqrt{|\kappa_{0}|}$ and $\kappa\nearrow\kappa_{0}$
and $\beta:=(n-1)\sqrt{|\kappa|}\searrow\beta_{0}$
\end{enumerate}
then $\tilde{f}_{\kappa,\beta}\to\tilde{f}_{\kappa_{0},\beta_{0}}$
pointwise. And if
\[
\kappa_{0}<0,\,\beta_{0}=-(n-1)\sqrt{|\kappa_{0}|}\:\mathrm{and}\:\kappa\nearrow\kappa_{0}\:\mathrm{and}\:\beta\searrow\beta_{0}
\]
then $\tilde{f}_{\kappa,\beta}\not\to\tilde{f}_{\kappa_{0},\beta_{0}}$
but still $\frac{\tilde{f}_{\kappa,\beta}(t)}{f_{\kappa,\beta}(0)}\to\frac{\tilde{f}_{\kappa_{0},\beta_{0}}(t)}{f_{\kappa_{0},\beta_{0}}(0)}$
for all $t\geq0$. 

Furthermore, for any $\kappa\leq0$ and $\beta\in\mathbb{R}$ one
has $\left|\frac{\tilde{f}_{\kappa,\beta}(t)}{f_{\kappa,\beta}(0)}\right|\leq\max\left\{ \left|\frac{\tilde{f}_{\kappa,\beta}(T)}{f_{\kappa,\beta}(0)}\right|,1\right\} $
for all $t\leq T$ because $\left|\tilde{f}_{\kappa,\beta}\right|$
is monotone or convex and for $\kappa>0$ and $\beta\in\mathbb{R}$ we have
$\left|\tilde{f}_{\kappa,\beta}(t)\right|\leq \frac{1}{\sqrt{\kappa}}$ for all $t\in\mathbb{R}$.
\end{lem}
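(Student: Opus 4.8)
The plan is to reduce every assertion to a single elementary fact about Table~\ref{tab:Warping-functions-for}: on the interval $(a_{\kappa,\beta},b_{\kappa,\beta})$ each warping function listed there solves $f''=-\kappa f$ with $f'(0)/f(0)=\beta/(n-1)$ (because the mean curvature of $\Sigma_{\kappa,\beta}$ equals $\beta$), so that, with $\mathrm{sn}_\kappa$ as in Section~\ref{sec:Volume-comparison-for-Riemannian} and $\mathrm{sn}_\kappa'$ its derivative ($\cos(\sqrt{\kappa}\,t)$, $1$, or $\cosh(\sqrt{|\kappa|}\,t)$ according to the sign of $\kappa$),
\[
\frac{\tilde{f}_{\kappa,\beta}(t)}{f_{\kappa,\beta}(0)}=\mathrm{sn}_\kappa'(t)+\frac{\beta}{n-1}\,\mathrm{sn}_\kappa(t)\qquad\text{for }0\le t<b_{\kappa,\beta},
\]
where $b_{\kappa,\beta}$ is the first positive zero of the right-hand side ($=\infty$ if there is none) and $\tilde{f}_{\kappa,\beta}\equiv0$ beyond it; all of this is read off Table~\ref{tab:Warping-functions-for} row by row. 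Since $\kappa\mapsto\mathrm{sn}_\kappa$ and $\kappa\mapsto\mathrm{sn}_\kappa'$ are continuous for locally uniform convergence --- as already used in the proof of Theorem~\ref{thm:grmov volume} --- the right-hand side depends continuously on $(\kappa,\beta)$, and this is precisely the claimed convergence of $t\mapsto\tilde{f}_{\kappa,\beta}(t)/f_{\kappa,\beta}(0)$ in each of the listed cases, once one has determined, for each limit pair $(\kappa_0,\beta_0)$, which row the nearby $(\kappa,\beta)$ occupy so that the two sides line up.

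For the unnormalized convergence in cases (1)--(4) it then remains to check that $f_{\kappa,\beta}(0)\to f_{\kappa_0,\beta_0}(0)$ and $b_{\kappa,\beta}\to b_{\kappa_0,\beta_0}$. In cases (2) and (4) one sits on the row $|\beta|/((n-1)\sqrt{|\kappa|})=1$, where $f_{\kappa,\beta}(t)=\exp(\mathrm{sgn}(\beta)\sqrt{|\kappa|}\,t)$ and $b_{\kappa,\beta}=\infty$, and continuity in $\kappa$ is immediate; in case (3) one is on the row $\kappa>0$, $\beta\neq0$, where $b=\cot^{-1}(\beta/((n-1)\sqrt{\kappa}))\to\pi/2$, giving $f_{\kappa,\beta}(0)\to1/\sqrt{\kappa_0}$ and $b_{\kappa,\beta}\to\pi/(2\sqrt{\kappa_0})=b_{\kappa_0,0}$; in case (1) with $\beta_0>0$ (resp.\ $\beta_0<0$) one lands, for $\kappa\nearrow0$, on the row $\beta/((n-1)\sqrt{|\kappa|})>1$ (resp.\ $<-1$), where $b=\coth^{-1}(\beta/((n-1)\sqrt{|\kappa|}))\to0$ and a short estimate with the hyperbolic addition theorem gives $f_{\kappa,\beta}(0)\to(n-1)/\beta_0=f_{0,\beta_0}(0)$ and $b_{\kappa,\beta}\to b_{0,\beta_0}$. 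Given these, the display yields $\tilde{f}_{\kappa,\beta}(t)\to\tilde{f}_{\kappa_0,\beta_0}(t)$ for $0\le t<b_{\kappa_0,\beta_0}$, and for $t\ge b_{\kappa_0,\beta_0}$ one uses $b_{\kappa,\beta}\to b_{\kappa_0,\beta_0}$: if $t>b_{\kappa_0,\beta_0}$ then eventually $b_{\kappa,\beta}<t$, so $\tilde{f}_{\kappa,\beta}(t)=0=\tilde{f}_{\kappa_0,\beta_0}(t)$, while at $t=b_{\kappa_0,\beta_0}$ one has either $\tilde{f}_{\kappa,\beta}(t)=0$ or $\tilde{f}_{\kappa,\beta}(t)=f_{\kappa,\beta}(t)\to f_{\kappa_0,\beta_0}(b_{\kappa_0,\beta_0})=0$, so in both cases $\tilde{f}_{\kappa,\beta}(t)\to0=\tilde{f}_{\kappa_0,\beta_0}(t)$.

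For the exceptional pair $\kappa_0<0$, $\beta_0=-(n-1)\sqrt{|\kappa_0|}$, the nearby $(\kappa,\beta)$ (with $\kappa\nearrow\kappa_0$, $\beta\searrow\beta_0$) satisfy $-1<\beta/((n-1)\sqrt{|\kappa|})<0$ and hence sit on the row $N_{\kappa,\beta}=S^{n-1}$, where $b=\tanh^{-1}(\beta/((n-1)\sqrt{|\kappa|}))\to-\infty$ since the argument tends to $-1$. Thus $f_{\kappa,\beta}(0)=\tfrac{1}{\sqrt{|\kappa|}}\cosh b\to\infty$ and $f_{\kappa,\beta}(t)=\tfrac{1}{\sqrt{|\kappa|}}\cosh(\sqrt{|\kappa|}\,t+b)\to\infty$ for every fixed $t$, so $\tilde{f}_{\kappa,\beta}\not\to\tilde{f}_{\kappa_0,\beta_0}$ (which equals $t\mapsto e^{-\sqrt{|\kappa_0|}\,t}$); but the display still holds, and $\mathrm{sn}_\kappa'(t)+\tfrac{\beta}{n-1}\mathrm{sn}_\kappa(t)=\cosh(\sqrt{|\kappa|}\,t)+\tfrac{\beta}{(n-1)\sqrt{|\kappa|}}\sinh(\sqrt{|\kappa|}\,t)\to\cosh(\sqrt{|\kappa_0|}\,t)-\sinh(\sqrt{|\kappa_0|}\,t)=e^{-\sqrt{|\kappa_0|}\,t}=\tilde{f}_{\kappa_0,\beta_0}(t)/f_{\kappa_0,\beta_0}(0)$.

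Finally, the two a priori bounds are direct case-checking against Table~\ref{tab:Warping-functions-for}. For $\kappa\le0$: when $b_{\kappa,\beta}=\infty$, $f_{\kappa,\beta}$ keeps a constant sign on $[0,\infty)$ and satisfies $f_{\kappa,\beta}''=|\kappa|\,f_{\kappa,\beta}$, so $|\tilde{f}_{\kappa,\beta}|=|f_{\kappa,\beta}|$ is convex there; when $b_{\kappa,\beta}<\infty$ (the two rows $\kappa=0$, $\beta<0$ and $\kappa<0$, $\beta/((n-1)\sqrt{|\kappa|})<-1$), $|f_{\kappa,\beta}|$ is convex on $[0,b_{\kappa,\beta})$ with limit $0$ at $b_{\kappa,\beta}$, hence $|\tilde{f}_{\kappa,\beta}|$ is (weakly) decreasing on $[0,\infty)$. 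Either way, with $g(t):=|\tilde{f}_{\kappa,\beta}(t)/f_{\kappa,\beta}(0)|$ one has $g(0)=1$ and $g(t)\le\max\{g(0),g(T)\}=\max\{1,g(T)\}$ for $0\le t\le T$ (by convexity of $g$ on $[0,T]$ in the first case, and because $g$ lies between $g(0)$ and $g(T)$ in the monotone case), which is the asserted inequality. For $\kappa>0$ the two rows give $|f_{\kappa,\beta}(t)|=\tfrac{1}{\sqrt{\kappa}}|\sin(\sqrt{\kappa}\,t+b)|$ resp.\ $\tfrac{1}{\sqrt{\kappa}}|\cos(\sqrt{\kappa}\,t)|$, both $\le1/\sqrt{\kappa}$, and truncation only replaces values by $0$, so $|\tilde{f}_{\kappa,\beta}(t)|\le1/\sqrt{\kappa}$. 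Beyond this bookkeeping there is no real obstacle; the only steps that need genuine care --- and the most likely source of slips --- are pinning down which row $(\kappa,\beta)$ occupies near each boundary pair, in particular spotting when $b_{\kappa,\beta}$ is finite, and the convergence at and past the moving truncation point, which the squeeze above takes care of.
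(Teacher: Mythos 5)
The paper itself does not spell out a proof of this lemma (it says only that ``simple calculations show the following''), so there is no argument in the text to compare yours against. Your proof fills in that gap and is correct. The key observation---that every row of Table~\ref{tab:Warping-functions-for} solves the same initial-value problem $f''=-\kappa f$, $f(0)$ free, $f'(0)/f(0)=\beta/(n-1)$, whence
\[
\frac{\tilde f_{\kappa,\beta}(t)}{f_{\kappa,\beta}(0)}=\mathrm{sn}_\kappa'(t)+\tfrac{\beta}{n-1}\,\mathrm{sn}_\kappa(t)\quad\text{for }0\le t<b_{\kappa,\beta}
\]
---unifies the case analysis that a raw row-by-row check would otherwise require, and the joint continuity of $(\kappa,\beta,t)\mapsto\mathrm{sn}_\kappa'(t)+\tfrac{\beta}{n-1}\mathrm{sn}_\kappa(t)$ then does most of the work. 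I verified the identification of the rows occupied by the approximating pairs in each of cases (1)--(4) and the exceptional case, the limits $f_{\kappa,\beta}(0)\to f_{\kappa_0,\beta_0}(0)$ (in particular $\tfrac{n-1}{\sqrt{\beta^2-(n-1)^2|\kappa|}}\to\tfrac{n-1}{|\beta_0|}$ in case (1), with sign), the convergence $b_{\kappa,\beta}\to b_{\kappa_0,\beta_0}$ where needed, and the squeeze at and past the moving truncation point; all are sound. The final bounds also check out: for $\kappa\le0$ the extended function $|\tilde f_{\kappa,\beta}|$ is convex on $[0,\infty)$ when $b_{\kappa,\beta}=\infty$ and monotone decreasing when $b_{\kappa,\beta}<\infty$ (exactly the rows $\kappa=0,\beta<0$ and $\kappa<0,\ \beta/((n-1)\sqrt{|\kappa|})<-1$), giving $g(t)\le\max\{g(0),g(T)\}$ with $g(0)=1$; for $\kappa>0$ the bound $|\tilde f_{\kappa,\beta}|\le1/\sqrt\kappa$ is immediate from the sine/cosine form. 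This is, if anything, cleaner than the direct calculation the paper presumably had in mind, since the normalized ODE formula makes the continuity in $(\kappa,\beta)$ transparent and isolates the truncation at $b_{\kappa,\beta}$ as the only delicate point.
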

This convergence result can be used to show convergence of areas and
volumes of future spheres and balls in $M_{\kappa,\beta}$ above a
subset of $\Sigma_{\kappa,\beta}=\{0\}\times N_{\kappa,\beta}\subset M_{\kappa,\beta}$
(which is an acausal, spacelike, FCC hypersurface in $M_{\kappa,\beta}$).

\begin{defn}
[Future spheres and balls] For any $t>0$ and $A\subset\Sigma$ we
define the spheres $S_{A}^{+}(t)$ and balls $B_{A}^{+}(t)$ of time
$t$ above $A$ by
\begin{align*}
S_{A}^{+}(t): & =\{p\in I^{+}(\Sigma):\,\exists q\in A\,\mathrm{with}\, d(q,p)=\tau_{\Sigma}(p)=t\}\:\mathrm{and}\\
B_{A}^{+}(t): & =\bigcup_{s\in\left(0,t\right)}S_{A}^{+}(s)
\end{align*}

\end{defn}
Using these definitions and Lem.\ \ref{lem:convergence of f tilde}
we show:
\begin{cor}
For any $\left(\kappa,\beta\right)\in\mathbb{R}^{2}$ there exist
sequences $\delta_{n}>0$ and $\eta_{n}>0$ converging to zero such
that 
\begin{equation}
\frac{\mathrm{area}_{\kappa-\delta_{n},\beta+\eta_{n}}S_{B_{n}}^{+}(t)}{\mathrm{area}_{\kappa-\delta_{n},\beta+\eta_{n}}B_{n}}\to\frac{\mathrm{area}_{\kappa,\beta}S_{B}^{+}(t)}{\mathrm{area}_{\kappa,\beta}B}\label{eq:area convergence in comparison space}
\end{equation}
and
\begin{equation}
\frac{\mathrm{vol}_{\kappa-\delta_{n},\beta+\eta_{n}}B_{B_{n}}^{+}(t)}{\mathrm{area}_{\kappa-\delta_{n},\beta+\eta_{n}}B_{n}}\to\frac{\mathrm{vol}_{\kappa,\beta}B_{B}^{+}(t)}{\mathrm{area}_{\kappa,\beta}B}\label{eq:volume convergence in comparison space}
\end{equation}
for all $t>0$ and any measurable $B_{n}\subset\Sigma_{\kappa-\delta_{n},\beta+\eta_{n}}$
and measurable $B\subset\Sigma_{\kappa,\beta}$.\end{cor}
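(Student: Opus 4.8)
The plan is to compute both left-hand sides explicitly from the warped-product structure of the comparison manifolds, reducing the statement to a convergence question for the scalar functions $t\mapsto\tilde f_{\kappa,\beta}(t)/f_{\kappa,\beta}(0)$, which is then answered by Lemma~\ref{lem:convergence of f tilde}. First I would identify the future spheres and balls above a subset $B\subset\Sigma_{\kappa,\beta}=\{0\}\times N_{\kappa,\beta}$. The integral curves $s\mapsto(s,y)$ of $\partial_{t}$ are unit-speed geodesics starting orthogonally to $\Sigma_{\kappa,\beta}$, and for any future directed causal curve $\tau\mapsto(t(\tau),y(\tau))$ from $\Sigma_{\kappa,\beta}$ to a point $p=(t_{1},y_{1})$ with $0<t_{1}<b_{\kappa,\beta}$ one has $L=\int\sqrt{\dot t^{2}-f_{\kappa,\beta}(t)^{2}h(\dot y,\dot y)}\,d\tau\le\int\dot t\,d\tau=t_{1}$, with equality precisely when $\dot y\equiv0$. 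Hence $\tau_{\Sigma_{\kappa,\beta}}(p)=t_{1}$, and the only curve from $\Sigma_{\kappa,\beta}$ realising this value is (a reparametrisation of) the vertical geodesic from $(0,y_{1})$; therefore, for every $B$,
\[
S_{B}^{+}(t)=\{t\}\times B\ \ (=\emptyset\ \text{for }t\ge b_{\kappa,\beta}),\qquad B_{B}^{+}(t)=\bigl(0,\min\{t,b_{\kappa,\beta}\}\bigr)\times B .
\]

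Since the induced metric on $\{t\}\times N_{\kappa,\beta}$ is $f_{\kappa,\beta}(t)^{2}h$ and the volume density of $g_{\kappa,\beta}$ is $|f_{\kappa,\beta}(t)|^{n-1}\,dt\wedge d\mathrm{vol}_{h}$, a direct computation gives (with $\tilde f_{\kappa,\beta}$ from \eqref{eq:def of f tilde} absorbing the case $t\ge b_{\kappa,\beta}$, and using that $f_{\kappa,\beta}$ has constant sign) that for any measurable $B$ of finite nonzero area the ratios in \eqref{eq:area convergence in comparison space} and \eqref{eq:volume convergence in comparison space} equal
\[
\frac{\mathrm{area}_{\kappa,\beta}S_{B}^{+}(t)}{\mathrm{area}_{\kappa,\beta}B}=\Bigl(\tfrac{\tilde f_{\kappa,\beta}(t)}{f_{\kappa,\beta}(0)}\Bigr)^{n-1},\qquad\frac{\mathrm{vol}_{\kappa,\beta}B_{B}^{+}(t)}{\mathrm{area}_{\kappa,\beta}B}=\int_{0}^{t}\Bigl(\tfrac{\tilde f_{\kappa,\beta}(s)}{f_{\kappa,\beta}(0)}\Bigr)^{n-1}ds ,
\]
which are independent of $B$. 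So it remains to choose $\delta_{n},\eta_{n}\searrow0$ so that $\tilde f_{\kappa-\delta_{n},\beta+\eta_{n}}(s)/f_{\kappa-\delta_{n},\beta+\eta_{n}}(0)\to\tilde f_{\kappa,\beta}(s)/f_{\kappa,\beta}(0)$ for all $s\ge0$, with enough control to pass the limit under $\int_{0}^{t}$.

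For the choice of sequences: on the open set where $(\kappa,\beta)\mapsto\tilde f_{\kappa,\beta}$ is pointwise continuous, any $\delta_{n},\eta_{n}\searrow0$ work; at the remaining $(\kappa,\beta)$ one takes the sequences allowed by Lemma~\ref{lem:convergence of f tilde}, in particular setting $\eta_{n}:=(n-1)\sqrt{|\kappa-\delta_{n}|}-\beta>0$ in the cases that force $\beta=(n-1)\sqrt{|\kappa|}$. In every case Lemma~\ref{lem:convergence of f tilde} provides the desired pointwise convergence of $\tilde f_{\kappa-\delta_{n},\beta+\eta_{n}}/f_{\kappa-\delta_{n},\beta+\eta_{n}}(0)$ --- for the single point $\kappa<0$, $\beta=-(n-1)\sqrt{|\kappa|}$, where $\tilde f$ itself does not converge, one uses precisely the normalised-convergence statement of the lemma, which is all the formulas above require. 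Raising to the power $n-1$ yields \eqref{eq:area convergence in comparison space} for each fixed $t$. For \eqref{eq:volume convergence in comparison space} I would invoke dominated convergence: by the uniform estimates in Lemma~\ref{lem:convergence of f tilde}, the integrands are bounded by $\max\{|\tilde f_{\kappa-\delta_{n},\beta+\eta_{n}}(t)/f_{\kappa-\delta_{n},\beta+\eta_{n}}(0)|,\,1\}^{n-1}$ on $[0,t]$ when $\kappa-\delta_{n}\le0$, and by $(\sqrt{\kappa-\delta_{n}}\,|f_{\kappa-\delta_{n},\beta+\eta_{n}}(0)|)^{-(n-1)}$ when $\kappa-\delta_{n}>0$ (which only occurs when $\kappa>0$, so $\kappa-\delta_{n}$ stays bounded away from $0$); since $f_{\kappa-\delta_{n},\beta+\eta_{n}}(0)\to f_{\kappa,\beta}(0)\ne0$ and the value at $t$ already converges, both majorants are bounded uniformly in $n$, so the limit passes inside the integral.

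The step I expect to require the most care is the bookkeeping at the exceptional pairs $(\kappa,\beta)$: checking case by case that strictly positive $\delta_{n},\eta_{n}\to0$ can be chosen satisfying the hypotheses of Lemma~\ref{lem:convergence of f tilde}, that a possible change of the fibre $N_{\kappa-\delta_{n},\beta+\eta_{n}}$ (e.g.\ $S^{n-1}$ instead of $\mathbb{R}^{n-1}$) is immaterial because the ratios depend only on the scalars $\tilde f_{\kappa,\beta}/f_{\kappa,\beta}(0)$, and that in the one case where $\tilde f_{\kappa,\beta}$ does not converge the normalised quantity --- exactly what enters the ratio formulas --- still does. The warped-product identification of $S_{B}^{+}(t)$ and $B_{B}^{+}(t)$ is otherwise routine, but worth spelling out because $b_{\kappa,\beta}$ may be finite while the claim is asserted for all $t>0$.
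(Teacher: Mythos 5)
Your argument is correct and follows essentially the same route as the paper: reduce both ratios to the normalized quantity $\tilde f_{\kappa,\beta}/f_{\kappa,\beta}(0)$, apply Lemma~\ref{lem:convergence of f tilde} for pointwise convergence (choosing the sequences $\delta_n,\eta_n$ as dictated by the exceptional cases of that lemma), and close the volume case via $\mathrm{vol}_{\kappa,\beta}B_B^+(t)=\int_0^t\mathrm{area}_{\kappa,\beta}S_B^+(\tau)\,d\tau$ and dominated convergence using the uniform bounds from the same lemma. The only deviation is that you rederive the identity $\mathrm{area}_{\kappa,\beta}S_B^+(t)/\mathrm{area}_{\kappa,\beta}B=(\tilde f_{\kappa,\beta}(t)/f_{\kappa,\beta}(0))^{n-1}$ from the warped-product structure, whereas the paper simply cites \citep[eq.~(15)]{TG}.
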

\begin{proof}
The first statement is an immediate consequence of the previous Lemma
and $\frac{\mathrm{area}_{\kappa,\beta}S_{B}^{+}(t)}{\mathrm{area}_{\kappa,\beta}B}=(\frac{\tilde{f}_{\kappa,\beta}(t)}{f_{\kappa,\beta}(0)})^{n-1}$
for any $\kappa,\beta$ (see \citep[eq.~(15)]{TG} and note that $S_{B}^{+}(t)=\emptyset$
for $t\geq b_{\kappa,\beta}$). For (\ref{eq:volume convergence in comparison space})
note that and $\mathrm{vol}_{\kappa,\beta}B_{B}^{+}(t)=\int_{0}^{t}\mathrm{area}_{\kappa,\beta}S_{B}^{+}(\tau)d\tau$
and that we may apply dominated convergence since for $\kappa\leq0$
and $\beta\in\mathbb{R}$ one has 
\[
\left|\frac{\tilde{f}_{\kappa-\delta_{n},\beta+\eta_{n}}(\tau)}{f_{\kappa-\delta_{n},\beta+\eta_{n}}(0)}\right|\leq\max\left\{ \left|\frac{\tilde{f}_{\kappa-\delta_{n},\beta+\eta_{n}}(T)}{f_{\kappa-\delta_{n},\beta+\eta_{n}}(0)}\right|,1\right\} \to\max\left\{ \left|\frac{\tilde{f}_{\kappa,\beta}(T)}{f_{\kappa,\beta}(0)}\right|,1\right\} 
\]
for all $\tau\leq t$ and for $\kappa>0,\beta\in\mathbb{R}$ one has
$\left|\frac{\tilde{f}_{\kappa-\delta_{n},\beta+\eta_{n}}(\tau)}{f_{\kappa-\delta_{n},\beta+\eta_{n}}(0)}\right|\leq\frac{(\kappa-\delta_n)^{-\frac{1}{2}}}{\left|f_{\kappa-\delta_{n},\beta+\eta_{n}}(0)\right|}\to\frac{1}{\sqrt{\kappa}\left|f_{\kappa,\beta}(0)\right|}$.\end{proof}
\begin{rem}
The reason we only show this for specific sequences $\delta_{n}$
and $\eta_{n}$ lies in our somewhat incomplete treatment of the dependence
of $\nicefrac{\tilde{f}_{\kappa,\beta}}{\tilde{f}_{\kappa,\beta}(0)}$
on $\kappa,\beta$ in Lem.\ \ref{lem:convergence of f tilde}: While
it does seem reasonable that the result remains true for all such
sequences, that would require many additional cases of possible convergence
to be checked in Lem.\ \ref{lem:convergence of f tilde}, which is
rather tedious and completely unnecessary for the rest of this work.

\end{rem}

\subsection{\label{sub:Volume-Comparison}Volume Comparison}

We first need to show area and volume comparison statements for the
approximating metrics and to do so we need to define future spheres
that avoid the cut locus.
\begin{defn}
For $t>0$ let
\[
\EuScript S_{A}^{+}(t):=S_{A}^{+}(t)\setminus\mathrm{Cut}^{+}(\Sigma).
\]
Similarly, but using the approximations $g_{\varepsilon}$ (from section
\ref{sub:Desired-properties-of}), the $g_{\varepsilon}$-time separation
$\tau_{\varepsilon,\Sigma}$ and the $\varepsilon$-cut locus, we
define $\EuScript S_{\varepsilon,A}^{+}(t)$. 
\end{defn}
Using results from \citep{TG} we are now able to prove area and volume comparison
statements for the approximating metrics.
\begin{prop}
[Area comparison for approximations]\label{prop: area for approx}Let
$\kappa,\beta\in\mathbb{R}$, $g\in\mathcal{C}^{1,1}$ and assume
$\left(M,g,\Sigma\right)$ is globally hyperbolic and satisfies $CCC(\kappa,\beta)$.
Let $A\subset\Sigma$ be compact, $\eta,\delta>0$, $B\subset\Sigma_{\kappa-\delta,\beta+\eta}$
(with finite, non-zero area) and $T>0$ such that all timelike, f.d.\
unit speed $g$-geodesics starting in $A$ orthogonally to $\Sigma$
exist until at least $T$. Then there exists $\varepsilon_{0}>0$
(depending on $\eta,\delta,A,T$) such that for all $\varepsilon<\varepsilon_{0}$
the function 
\[
t\mapsto\frac{\mathrm{area}_{\varepsilon}\,\EuScript S_{\varepsilon,A}^{+}(t)}{\mathrm{area}_{\kappa-\delta,\beta+\eta}S_{B}^{+}(t)},
\]
is nonincreasing on $(0,T]$ if $T<b_{\kappa-\delta,\beta+\eta}$
or on $(0,b_{\kappa-\delta,\beta+\eta})$ if $T\geq b_{\kappa-\delta,\beta+\eta}$.\end{prop}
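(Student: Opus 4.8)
The strategy is to reduce the $\mathcal{C}^{1,1}$ statement to the smooth area comparison theorem of \citep[Thm.~9 or its area-precursor]{TG} applied to the approximating metrics $g_{\varepsilon}$ constructed in section \ref{sub:Desired-properties-of}, which by construction are globally hyperbolic, make $\Sigma$ a smooth spacelike acausal FCC hypersurface (Lem.\ \ref{lem: FCC}), and carry the curvature data in an approximate sense (Lem.\ \ref{lem: ricci}, Lem.\ \ref{lem: mean curvature}). First I would fix the compact set $A$, the parameters $\eta,\delta>0$, and $T>0$, and then locate the relevant compact subsets of $TM$: by Lem.\ \ref{lem: starting vectors ompact} the union $\bigcup_{0\le\varepsilon\le\varepsilon_1}S^{+}_{\varepsilon}N_{\varepsilon}A$ lies in a compact neighborhood $U$ of $S^{+}NA$ for $\varepsilon_1$ small, and by Prop.\ \ref{prop: f} (applied to a compact $K\subset TM$ containing this union) there is $\varepsilon_0\le\varepsilon_1$ so that for $\varepsilon\le\varepsilon_0$ all $g_{\varepsilon}$-geodesics starting in $K$ exist until time $T$ and the flow map $f$ is jointly continuous in $(\varepsilon,t,v)$. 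This guarantees everything takes place inside a fixed compact region.

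**Curvature bounds on the approximants.** With $K$ as above, Lem.\ \ref{lem: ricci} gives, after possibly shrinking $\varepsilon_0$, that $\mathbf{Ric}_{\varepsilon}(X,X)\ge -(n-1)(\kappa+\tfrac{\delta}{2})$ for all $g_{\varepsilon}$-unit-timelike $X\in TM|_{K}$ with $h$-norm bounded by the constant $C$ coming from the compactness of $U$ (this is exactly where the $h$-bound hypothesis of Lem.\ \ref{lem: ricci} is used: unit $g_{\varepsilon}$-timelike vectors over the compact image of the geodesic flow have uniformly bounded $h$-length); similarly Lem.\ \ref{lem: mean curvature} yields $H_{\varepsilon}|_{A}<\beta+\tfrac{\eta}{2}$ for $\varepsilon$ small. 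Since the comparison warping function $f_{\kappa-\delta,\beta+\eta}$ depends only on $(\kappa-\delta,\beta+\eta)$, for $\varepsilon$ small enough the triple $(M,g_{\varepsilon},\Sigma)$ satisfies $CCC(\kappa-\delta,\beta+\eta)$ on the compact region swept out by the geodesics emanating from $A$. Applying the smooth area comparison result of \citep{TG} to $(M,g_{\varepsilon},\Sigma)$ with comparison parameters $(\kappa-\delta,\beta+\eta)$ then shows that $t\mapsto \mathrm{area}_{\varepsilon}\,\EuScript S^{+}_{\varepsilon,A}(t)/\mathrm{area}_{\kappa-\delta,\beta+\eta}S^{+}_{B}(t)$ is nonincreasing on the asserted interval. (The smooth theorem is naturally formulated on the portion of the normal exponential image before the cut locus, which is exactly why the sets $\EuScript S^{+}_{\varepsilon,A}(t)=S^{+}_{\varepsilon,A}(t)\setminus\mathrm{Cut}^{+}_{\varepsilon}(\Sigma)$ appear; the Jacobian/Riccati computation underlying \citep{TG} lives precisely there.)

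**The main obstacle.** The delicate point is not the monotonicity for each fixed $\varepsilon$ — that is the smooth theorem — but verifying that the curvature hypotheses hold uniformly on a single compact region independent of $\varepsilon$, and that the region genuinely contains all of $\EuScript S^{+}_{\varepsilon,A}(t)$ for $t\le T$ simultaneously for all small $\varepsilon$. Concretely, one must check that for $\varepsilon\le\varepsilon_0$ every cut-free point of $S^{+}_{\varepsilon,A}(t)$ is of the form $\exp^{N_{\varepsilon}}_{p}(tv)$ with $v\in S^{+}_{\varepsilon}N_{\varepsilon}A$ and $(v,s)\in\mathcal{D}_{\varepsilon}$ for all $s\le t$, so that it lies in $f(\{\varepsilon\}\times[0,T]\times K)$, which is contained in a fixed compact set; this is where Prop.\ \ref{prop: f} and Lem.\ \ref{lem: starting vectors ompact} are combined. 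A second, smaller technical issue is ensuring the $h$-norm bound needed to invoke Lem.\ \ref{lem: ricci}: one takes $C:=\sup\{\|w\|_{h}: w\in f([0,\varepsilon_0]\times[0,T]\times K)\}<\infty$, finite by compactness and continuity of $f$, and notes that the velocity vectors along the geodesics, being $g_{\varepsilon}$-unit, have $h$-norm at most $C$. Once these uniformities are in place the proof is a direct quotation of the smooth statement, so I would present it compactly: set up $K$, $C$, $\varepsilon_0$; record that $(M,g_{\varepsilon},\Sigma)$ satisfies $CCC(\kappa-\delta,\beta+\eta)$ in the relevant region; invoke \citep{TG}.
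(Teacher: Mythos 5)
Your overall strategy — set up a fixed compact region using Lem.\ \ref{lem: starting vectors ompact} and Prop.\ \ref{prop: f}, feed that into Lem.\ \ref{lem: ricci} and \ref{lem: mean curvature} to get the weakened curvature bounds there, and then reduce to the smooth area comparison theorem of \citep{TG} — is exactly the paper's approach. The setup portion of your proposal matches the paper essentially line by line.

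However, there is a substantive gap in the final reduction. You write that once the uniformities are in place ``the proof is a direct quotation of the smooth statement.'' It is not: the hypotheses of \citep[Thm.~8]{TG} demand \emph{global} bounds ($CCC(\kappa-\delta,\beta+\eta)$ on all of $M$ for the smooth metric), whereas Lem.\ \ref{lem: ricci} only delivers the Ricci bound on a compact subset of $TM$ and Lem.\ \ref{lem: mean curvature} only on compact subsets of $\Sigma$. So $(M,g_\varepsilon,\Sigma)$ does \emph{not} satisfy $CCC(\kappa-\delta,\beta+\eta)$ and the theorem cannot be invoked verbatim. The paper makes this point explicitly (``\citep{TG} requires global bounds while we only have them on compact subsets''), and therefore does not quote the statement but re-runs the \emph{proof} of \citep[Thm.~8]{TG} (which feeds back to \citep[Thm.~7]{TG}): it passes to exhausting compact sets $K_{\varepsilon,j}\subset\EuScript S^+_{\varepsilon,A}(t_2)$, flows them by $-\mathrm{grad}(\tau_{\varepsilon,\Sigma})$, writes $\frac{d}{dt}\log(\mathrm{area}\,K_{\varepsilon,j}(t))$ as a mean-curvature integral, and then establishes the pointwise comparison $H_{\varepsilon,t}(q)\le H_{\kappa-\delta,\beta+\eta}(t)$ via the Riccati argument. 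The crucial observation — which is the mathematical content you need to state, rather than defer — is that this Riccati argument uses the Ricci bound \emph{only along the unique maximizing unit-speed $g_\varepsilon$-geodesic} from $\Sigma$ to $q$, and its velocity vector lies entirely in the compact set $\tilde K=f([0,\tilde\varepsilon_0]\times[0,T]\times K_{\varepsilon_0})$ where Lem.\ \ref{lem: ricci} applies (with the $h$-norm bound $C$ supplied by compactness of $\tilde K$), and the mean-curvature bound is used only at $\gamma_\varepsilon(0)\in A$, where Lem.\ \ref{lem: mean curvature} applies. You allude to this (``the Jacobian/Riccati computation underlying \citep{TG} lives precisely there'') but then undercut it by calling the remaining step a quotation; a complete proof must open up \citep[Thm.~7–8]{TG} and verify that local bounds suffice.

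One cosmetic point: your displayed Ricci inequality $\mathbf{Ric}_\varepsilon(X,X)\ge -(n-1)(\kappa+\tfrac{\delta}{2})$ has the wrong sign and wrong shift compared with Lem.\ \ref{lem: ricci}, whose conclusion for $g_\varepsilon(X,X)=-1$ is $\mathbf{Ric}_\varepsilon(X,X)\ge (n-1)(\kappa-\delta)$; the convention $\mathbf{Ric}(X,X)\ge -(n-1)\kappa\,g(X,X)$ in Def.\ \ref{def: CCC} flips the sign when evaluated on unit timelike vectors. This is presumably a slip, but worth fixing since the direction of the Ricci bound is what drives the comparison.
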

\begin{proof}
We would like to use \citep[Thm.~8]{TG}, however we have to argue
that the bounds on Ricci and mean curvature from Lemma \ref{lem: ricci}
and \ref{lem: mean curvature} are sufficient to show this for smooth
metrics (\citep{TG} requires global bounds while we only have them
on compact subsets of $TM$ respectively $\Sigma$). 

First we note that by compactness of $S^{+}NA$ there exists a neighborhood
$U$ of $S^{+}NA$ in $TM$ such that all $g$-geodesics starting
in $U$ exist until at least $T$. Then by Lem.\ \ref{lem: starting vectors ompact},
for $\varepsilon_{0}$ small the set $K_{\varepsilon_{0}}:=\bigcup_{0\leq\varepsilon\leq\varepsilon_{0}}S_{\varepsilon}^{+}N_{\varepsilon}A\subset TM$
is compact and contained in $U$, hence any $g$-geodesic starting
in it exists until $T$. So by Prop.\ \ref{prop: f} there exists
$\varepsilon_{0}>\tilde{\varepsilon}_{0}>0$ such that $f\left(\left[0,\tilde{\varepsilon}_{0}\right]\times\left[0,T\right]\times K_{\varepsilon_{0}}\right)=:\tilde{K}\subset TM$
is compact, in particular $h$-norm bounded by a constant $C$. 

From here the proof proceeds analogously to \citep[Thm.~8]{TG}. Let
$0<t_{1}<t_{2}<\min(T,b_{\kappa-\delta,\beta+\eta})$ and choose a
sequence of compact sets $K_{\varepsilon,j}\subset\EuScript S_{\varepsilon,A}^{+}(t_{2})$
with $\mathrm{area}\, K_{\varepsilon,j}\nearrow\mathrm{area}\,\EuScript S_{\varepsilon,A}^{+}(t_{2})$.
Now, as in \citep{TG}, we get sets
\[
K_{\varepsilon,j}(t):=\Phi_{\varepsilon,t-t_{2}}(K_{\varepsilon,j})\subset\EuScript S_{\varepsilon,K_{i}}^{+}(t),
\]
where $\Phi$ is the flow of $-\mathrm{grad}(\tau_{\varepsilon,\Sigma})$,
and
\[
\frac{d}{dt}\log(\mathrm{area}\, K_{\varepsilon,j}(t))=\frac{1}{\mathrm{area}\, K_{\varepsilon,j}(t)}\int_{K_{\varepsilon,j}(t)}H_{\varepsilon,t}(q)d\mu_{\varepsilon,t}(q).
\]
Next we show that for $\varepsilon$ small enough (depending on $\eta$,
$\delta$, $A$ and $\tilde{K}$) 
\begin{equation}
H_{\varepsilon,t}(q)\leq H_{\kappa-\delta,\beta+\eta}(t)\label{eq:mean curvature comparison}
\end{equation}
for all $q\in K_{\varepsilon,j}(t)$. This proceeds similarly to \citep[Thm.~7]{TG}:
For any $q\in\EuScript S_{\varepsilon,K_{j}}^{+}(t)$ the unique,
maximizing, unit speed $g_{\varepsilon}$-geodesic $\gamma_{\varepsilon}$
connecting $q$ to $\Sigma$ satisfies $\dot{\gamma}_{\varepsilon}(0)\in S_{\varepsilon}^{+}N_{\varepsilon}A$
and we have $H(\gamma(0))\leq\beta+\eta$ (by Lem.\ \ref{lem: mean curvature})
and $\dot{\gamma}_{\varepsilon}\subset\tilde{K}$ and hence $\mathbf{Ric}_{\varepsilon}(\dot{\gamma}_{\varepsilon}(s),\dot{\gamma}_{\varepsilon}(s))\geq n\,(\kappa-\delta)$
for all $s\in[0,t]$ (by Lem.\ \ref{lem: ricci}).  Note that this
is all that is needed to apply the Riccati comparison argument used
in the proof of Thm.\ 7 and it is the only place where the curvature
estimates enter the proof. So we get (\ref{eq:mean curvature comparison}).
The remainder of the proof is completely analogous to \citep[Thm.~8]{TG}.\end{proof}
\begin{prop}
[Volume comparison for approximations] \label{prop: volume for approx}Let
$\kappa,\beta\in\mathbb{R}$, $g\in\mathcal{C}^{1,1}$ and assume
$\left(M,g,\Sigma\right)$ is globally hyperbolic and satisfies $CCC(\kappa,\beta)$.
Let $A\subset\Sigma$ be compact, $\eta,\delta>0$, $B\subset\Sigma_{\kappa-\delta,\beta+\eta}$
(with finite, non-zero area) and $T>0$ such that all timelike, f.d.\
unit speed geodesics starting orthogonally to $A$ exist until at
least $T$. Then there exists $\varepsilon_{0}>0$ (depending on $\eta,\delta,A,T$)
such that for all $\varepsilon<\varepsilon_{0}$ the function 
\[
t\mapsto\frac{\mathrm{vol}_{\varepsilon}\, B_{\varepsilon,A}^{+}(t)}{\mathrm{vol}_{\kappa-\delta,\beta+\eta}B_{B}^{+}(t)}
\]
is nonincreasing on $(0,T]$ if $T<b_{\kappa-\delta,\beta+\eta}$
and on $(0,\infty)$ if $T\geq b_{\kappa-\delta,\beta+\eta}$.\end{prop}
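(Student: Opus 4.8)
The plan is to deduce this from the area comparison for approximations (Prop.\ \ref{prop: area for approx}) by the same integration step that turns the Bishop--Gromov area estimate into a volume estimate, i.e.\ the reduction of \citep[Thm.~9]{TG} to \citep[Thm.~8]{TG}. The first thing I would record is that, since $g_{\varepsilon}$ is \emph{smooth} and $(M,g_{\varepsilon})$ is globally hyperbolic with $\Sigma$ a $g_{\varepsilon}$-acausal, spacelike, FCC hypersurface (Lem.\ \ref{lem: FCC}), the $g_{\varepsilon}$-cut locus of $\Sigma$ has measure zero (classically, or by Prop.\ \ref{prop:cut locus has measure zero} with $g$ replaced by $g_{\varepsilon}$). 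Hence $B_{\varepsilon,A}^{+}(t)$ agrees up to a $\mu_{\varepsilon}$-null set with the smooth region swept out by the non-cut maximal $g_{\varepsilon}$-geodesics issuing orthogonally from $A$, on which $\tau_{\varepsilon,\Sigma}$ is smooth with $g_{\varepsilon}$-unit timelike gradient, and the coarea formula (exactly as used in \citep{TG}) yields
\[
\mathrm{vol}_{\varepsilon}\,B_{\varepsilon,A}^{+}(t)=\int_{0}^{t}\mathrm{area}_{\varepsilon}\,\EuScript S_{\varepsilon,A}^{+}(s)\,ds,\qquad \mathrm{vol}_{\kappa-\delta,\beta+\eta}B_{B}^{+}(t)=\int_{0}^{t}\mathrm{area}_{\kappa-\delta,\beta+\eta}S_{B}^{+}(s)\,ds.
\]
Finiteness of the left-hand integral for $t\le T$ comes from Prop.\ \ref{prop: f} and Lem.\ \ref{lem: starting vectors ompact}, which confine the relevant geodesic segments to a compact subset of $TM$; finiteness of the comparison volume comes from $\mathrm{area}\,B<\infty$ together with the bounds on $\tilde f_{\kappa-\delta,\beta+\eta}/f_{\kappa-\delta,\beta+\eta}(0)$ from Lem.\ \ref{lem:convergence of f tilde}.

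Next I would invoke the elementary monotonicity lemma underlying Bishop--Gromov: if $a,b:(0,R)\to[0,\infty)$ are locally bounded and measurable, $\int_{0}^{t}b>0$ for all $t\in(0,R)$, and $s\mapsto a(s)/b(s)$ is nonincreasing (taking $a=0$ where $b=0$), then $t\mapsto\int_{0}^{t}a\,/\int_{0}^{t}b$ is nonincreasing; this is the usual cross-multiplication computation, reducing to $\int_{0}^{t_{1}}a\int_{t_{1}}^{t_{2}}b\ge\int_{t_{1}}^{t_{2}}a\int_{0}^{t_{1}}b$. Taking $\varepsilon_{0}$ as in Prop.\ \ref{prop: area for approx} and putting $a(s)=\mathrm{area}_{\varepsilon}\,\EuScript S_{\varepsilon,A}^{+}(s)$, $b(s)=\mathrm{area}_{\kappa-\delta,\beta+\eta}S_{B}^{+}(s)=(\tilde f_{\kappa-\delta,\beta+\eta}(s)/f_{\kappa-\delta,\beta+\eta}(0))^{n-1}\,\mathrm{area}\,B$, which is positive precisely on $(0,b_{\kappa-\delta,\beta+\eta})$, Prop.\ \ref{prop: area for approx} gives that $a/b$ is nonincreasing on $(0,T]$ (if $T<b_{\kappa-\delta,\beta+\eta}$) respectively on $(0,b_{\kappa-\delta,\beta+\eta})$. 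In the case $T<b_{\kappa-\delta,\beta+\eta}$ the lemma then immediately gives that $t\mapsto\mathrm{vol}_{\varepsilon}B_{\varepsilon,A}^{+}(t)/\mathrm{vol}_{\kappa-\delta,\beta+\eta}B_{B}^{+}(t)$ is nonincreasing on $(0,T]$.

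For the case $T\ge b_{\kappa-\delta,\beta+\eta}$ the lemma gives monotonicity on $(0,b_{\kappa-\delta,\beta+\eta})$, and it remains to glue in $[b_{\kappa-\delta,\beta+\eta},\infty)$. On that range the comparison volume is constant, equal to $\int_{0}^{b_{\kappa-\delta,\beta+\eta}}b$, since $S_{B}^{+}(s)=\emptyset$ for $s\ge b_{\kappa-\delta,\beta+\eta}$. The numerator is constant there for the parallel reason: the Riccati comparison behind Prop.\ \ref{prop: area for approx} (i.e.\ \citep[Thm.~7]{TG}) gives $H_{\varepsilon,t}(q)\le H_{\kappa-\delta,\beta+\eta}(t)$ along every maximal $g_{\varepsilon}$-geodesic issuing orthogonally from $A$, while a glance at Table \ref{tab:Warping-functions-for} shows $H_{\kappa-\delta,\beta+\eta}(t)=(n-1)f_{\kappa-\delta,\beta+\eta}'(t)/f_{\kappa-\delta,\beta+\eta}(t)\to-\infty$ as $t\nearrow b_{\kappa-\delta,\beta+\eta}$ (a simple zero of $f_{\kappa-\delta,\beta+\eta}$); hence each such geodesic develops a focal point at or before $b_{\kappa-\delta,\beta+\eta}$ and stops maximizing the distance to $\Sigma$, so $\EuScript S_{\varepsilon,A}^{+}(s)=\emptyset$ for $s>b_{\kappa-\delta,\beta+\eta}$ and $\mathrm{vol}_{\varepsilon}B_{\varepsilon,A}^{+}(t)$ stabilizes at $t=b_{\kappa-\delta,\beta+\eta}$. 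Since both volume functions are continuous in $t$, a quotient that is nonincreasing on $(0,b_{\kappa-\delta,\beta+\eta})$ and constant on $[b_{\kappa-\delta,\beta+\eta},\infty)$ with matching value is nonincreasing on all of $(0,\infty)$.

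The step I expect to be the real work is the coarea identity $\mathrm{vol}_{\varepsilon}B_{\varepsilon,A}^{+}(t)=\int_{0}^{t}\mathrm{area}_{\varepsilon}\EuScript S_{\varepsilon,A}^{+}(s)\,ds$ together with the negligibility of the $g_{\varepsilon}$-cut locus: this is exactly the place where smoothness of $g_{\varepsilon}$ is essential and where one must verify that the regular part of $B_{\varepsilon,A}^{+}(t)$ is genuinely a smooth domain foliated by the $\EuScript S_{\varepsilon,A}^{+}(s)$ on which the coarea formula is applicable, all of which is available for smooth metrics from the analysis in \citep{TG}. The monotonicity lemma, the focal-point stabilization, and the gluing at $b_{\kappa-\delta,\beta+\eta}$ are then routine.
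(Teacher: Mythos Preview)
Your proposal is correct and follows essentially the same route as the paper: reduce to the area comparison via the coarea formula (as in \citep[Thm.~9]{TG}) for $T<b_{\kappa-\delta,\beta+\eta}$, and for $T\ge b_{\kappa-\delta,\beta+\eta}$ argue that the spheres become empty past $b_{\kappa-\delta,\beta+\eta}$ (the paper cites the argument of \citep[Thm.~10]{TG}, which is precisely the Riccati/focal-point reasoning you spell out), so both numerator and denominator stabilize. Your write-up is more explicit about the integration lemma and the cut-locus negligibility, but the structure and key inputs are the same.
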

\begin{proof}
For $T<b_{\kappa-\delta,\beta+\eta}$ this follows from the area comparison
result by using the coarea formula (see \citep[Thm.~9]{TG} for details).
Since $b_{\kappa-\delta,\beta+\eta}$ is defined by being the upper
bound of the maximal interval of positivity of the warping function
$f_{\kappa-\delta,\beta+\eta}$, we either have $b_{\kappa-\delta,\beta+\eta}=\infty$
or $b_{\kappa-\delta,\beta+\eta}<\infty$ and $\lim_{t\nearrow b_{\kappa-\delta,\beta+\eta}}f_{\kappa-\delta,\beta+\eta}(t)=0$.
In the second case an argument completely analogous to the area comparison
proof of \citep[Thm.~10]{TG} shows that $S_{\varepsilon,\Sigma}^{+}(t)=\emptyset$
for $t>b_{\kappa-\delta,\beta+\eta}$. Hence by the coarea formula
(see \citep[Prop.~3]{TG}) $t\mapsto\text{vol}_\varepsilon(B_{\varepsilon,A}^{+}(t))$
remains constant for $t>b_{\kappa-\delta,\beta+\eta}$ and thus $\frac{\mathrm{vol}_{\varepsilon}\, B_{\varepsilon,A}^{+}(t)}{\mathrm{vol}_{\kappa-\delta,\beta+\eta}B_{B}^{+}(t)}$
remains nonincreasing for $t>b_{\kappa-\delta,\beta+\eta}$.
\end{proof}
The plan is now to use Prop.\ \ref{prop: volume for approx} and
first let $\varepsilon\to0$ and then $\delta,\eta\to0$. To make
the proof more readable, we first show that $\mathrm{vol}_{\varepsilon}\, B_{\varepsilon,A}^{+}(t)\to\mathrm{vol}\, B_{A}^{+}(t)$
in a separate Lemma.
\begin{lem}
[Volume convergence]\label{lem:volume convergence} Let $g\in\mathcal{C}^{1,1}$,
assume $(M,g)$ is globally hyperbolic and let $\Sigma\subset M$
be an acausal, spacelike, FCC hypersurface. Let $A\subset\Sigma$
be compact with $\mu_{\Sigma}(\partial A)=0$ (where $\partial A$
is the boundary of $A$ in $\Sigma$) and $T>0$ such that all timelike,
f.d., unit speed geodesics starting orthogonally to $A$ exist until
at least $T$. Then for any $0<t\leq T$ we have
\[
\mathrm{vol}_{\varepsilon}\, B_{\varepsilon,A}^{+}(t)\to\mathrm{vol}\, B_{A}^{+}(t)
\]
for $\varepsilon\to0$. \end{lem}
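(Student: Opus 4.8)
The plan is to write $B^+_{\varepsilon,A}(t)$ and $B^+_A(t)$, up to sets of measure zero, as diffeomorphic images under the relevant normal exponential maps of an explicitly described region in the unit normal bundle, and then pass to the limit $\varepsilon\to0$ using dominated convergence together with the fact (Prop.~\ref{prop:cut locus has measure zero}) that $\mathrm{Cut}^+(\Sigma)$ has measure zero. More precisely, since $\tau_\Sigma$ is continuous (Prop.~\ref{prop: time sep continuous}) and every $p\in I^+(\Sigma)\setminus\mathrm{Cut}^+(\Sigma)$ lies on a unique maximizing geodesic meeting $\Sigma$ orthogonally, the map $(v,s)\mapsto\exp^N(sv)$ restricted to $\{(v,s):v\in S^+NA,\,0<s<\min(s^+_\Sigma(v),t)\}$ is a bijection onto $B^+_A(t)\setminus\mathrm{Cut}^+(\Sigma)$, and on this set it is a local Lipschitz homeomorphism with a.e.-defined Jacobian; hence by the area formula
\[
\mathrm{vol}\,B^+_A(t)=\int_{S^+NA}\int_0^{\min(s^+_\Sigma(v),t)}\mathcal{A}(v,s)\,ds\,d\mu(v),
\]
where $\mathcal{A}(v,s)$ is the Jacobian density of $\exp^N$ (a measurable, locally bounded function by Rademacher), and the analogous formula holds for $\mathrm{vol}_\varepsilon\,B^+_{\varepsilon,A}(t)$ with $s^+_{\varepsilon,\Sigma}$, $\mathcal{A}_\varepsilon$, $\mu_\varepsilon$ in place of their $g$-counterparts.

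The second step is to control the three ingredients $\mathcal{A}_\varepsilon$, $s^+_{\varepsilon,\Sigma}$, $\mu_\varepsilon$ as $\varepsilon\to0$. The densities converge pointwise, $\mathcal{A}_\varepsilon(v,s)\to\mathcal{A}(v,s)$ for a.e.\ $(v,s)$, because the $g_\varepsilon$-geodesic flow converges to the $g$-geodesic flow in $\mathcal{C}^0$ together with first derivatives in the initial data (this is essentially Prop.~\ref{prop: f} applied also to the derivative of the flow, using the locally uniform convergence of the Christoffel symbols and Gronwall-type ODE comparison), so the Jacobian determinants of $\exp^N_\varepsilon$ converge a.e.\ and are uniformly bounded on the relevant compact set $\tilde K$ of tangent vectors supplied by Prop.~\ref{prop: f}; similarly $\mu_\varepsilon\to\mu$ on $S^+NA$ since $\mathbf n_\varepsilon\to\mathbf n$ in $\mathcal{C}^1$ (Lem.~\ref{lem: mean curvature} and (\ref{eq:n als grad f})). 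For the cut functions, one does not get pointwise convergence, but one does get the crucial inequality $\limsup_{\varepsilon\to0}s^+_{\varepsilon,\Sigma}(v)\le s^+_\Sigma(v)$ for all $v$ with $(v,s^+_\Sigma(v))\in\mathcal D$: if $s>s^+_\Sigma(v)$ then $\gamma_v|_{[0,s]}$ is not $g$-maximizing, so $\tau_\Sigma(\gamma_v(s))>L(\gamma_v|_{[0,s]})$, and by continuity of $\tau_\Sigma$ and $\tau_{\varepsilon,\Sigma}\to\tau_\Sigma$ (which follows from $g_\varepsilon\prec g$ and the estimates of Section~\ref{sub:Desired-properties-of}) the same strict inequality holds for $\gamma^\varepsilon_v$ for $\varepsilon$ small, so $s^+_{\varepsilon,\Sigma}(v)\le s$. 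Since $s^+_\Sigma(v)$ is finite for every $v\in S^+NA$ (by global hyperbolicity and the fact that $\gamma_v$ must leave any given neighborhood's causal shadow, or simply because $\tau_\Sigma$ is finite-valued and continuous along $\gamma_v$ up to time $T$), this gives $\limsup_\varepsilon \mathbf 1_{(0,\min(s^+_{\varepsilon,\Sigma}(v),t))}(s)\le \mathbf 1_{(0,\min(s^+_\Sigma(v),t))}(s)$ off the $\mu\otimes ds$-null set $\{s=s^+_\Sigma(v)\}$ (null because $\mathrm{Cut}^+_T(\Sigma)$, which is $\mathrm{graph}(s^+_\Sigma)$ in the normal bundle, has measure zero — this is exactly where Prop.~\ref{prop:cut locus has measure zero} is used). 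An analogous lower-semicontinuity estimate on the other side, $\liminf_\varepsilon s^+_{\varepsilon,\Sigma}(v)\ge s^+_\Sigma(v)$ a.e., is obtained by a limit-curve argument (Lem.~\ref{lem:Limit curve}): given $v_\varepsilon\to v$ realizing the $\varepsilon$-cut, extract a maximizing $g$-limit geodesic and conclude. Together these yield $s^+_{\varepsilon,\Sigma}(v)\to s^+_\Sigma(v)$ for $\mu$-a.e.\ $v\in S^+NA$.

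Finally I would combine these: on $S^+NA\times[0,t]$ the integrands $\mathbf 1_{\{s<\min(s^+_{\varepsilon,\Sigma}(v),t)\}}\mathcal{A}_\varepsilon(v,s)$ converge $\mu\otimes ds$-a.e.\ to $\mathbf 1_{\{s<\min(s^+_\Sigma(v),t)\}}\mathcal{A}(v,s)$ and are dominated by a fixed $L^1$ bound (a uniform bound on $\mathcal{A}_\varepsilon$ over $\tilde K$ times the indicator of the fixed compact parameter set), and $\mu_\varepsilon\to\mu$ weakly enough (indeed in the appropriate strong sense, since $d\mu_\varepsilon=J_\varepsilon\,d\mathcal H^{n-1}$ with $J_\varepsilon\to J$ in $\mathcal{C}^0$ on $A$) to let us interchange limit and integral; hence $\mathrm{vol}_\varepsilon\,B^+_{\varepsilon,A}(t)\to\mathrm{vol}\,B^+_A(t)$. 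The hypothesis $\mu_\Sigma(\partial A)=0$ enters to ensure that the boundary contributes nothing and that $S^+NA$ is the closure of $S^+N(\mathrm{int}\,A)$ up to null sets, so that the a.e.\ statements are not spoiled at $\partial A$. The main obstacle is the handling of the cut functions: establishing $s^+_{\varepsilon,\Sigma}\to s^+_\Sigma$ a.e.\ requires both the easy ``$\limsup\le$'' direction (continuity of time separations plus $g_\varepsilon\prec g$) and the harder ``$\liminf\ge$'' direction via limit curves, and it is only because $\mathrm{Cut}^+(\Sigma)$ is null (Prop.~\ref{prop:cut locus has measure zero}) that the possible failure of convergence exactly on the graph of $s^+_\Sigma$ does no harm.
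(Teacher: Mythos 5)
Your proof takes a genuinely different route from the paper: you pull the volume integral back to the unit normal bundle via the normal exponential map and the area formula for Lipschitz maps, whereas the paper's proof stays in spacetime and establishes pointwise a.e.\ convergence of the characteristic functions $\chi_{B_{\varepsilon,A}^+(t)}\to\chi_{B_A^+(t)}$ by a five-case analysis, then applies dominated convergence. The decomposition into the normal bundle is natural, and your treatment of the cut functions (the easy $\limsup\leq$ direction via continuity of time separations and $g_\varepsilon\prec g$, and the $\liminf\geq$ direction via Lemma~\ref{lem:Limit curve}) is close in spirit to what the paper does in spacetime; using that the tangential cut locus is the graph of $s^+_\Sigma$ and hence $\mu$-null is also a legitimate reuse of the ingredients of Prop.~\ref{prop:cut locus has measure zero}.

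However, there is a genuine gap in the step claiming that the Jacobian densities $\mathcal{A}_\varepsilon(v,s)$ of $\exp^N_\varepsilon$ converge a.e.\ to $\mathcal{A}(v,s)$. You assert that this follows from Prop.~\ref{prop: f} ``applied also to the derivative of the flow, using the locally uniform convergence of the Christoffel symbols,'' but the derivative of the exponential map obeys the Jacobi equation, whose coefficient is the curvature --- i.e.\ the \emph{derivatives} of the Christoffel symbols. Since $g_\varepsilon\to g$ only in $\mathcal{C}^1$ and the second derivatives are merely locally uniformly bounded (Prop.~\ref{prop:basic props of approx}), the tensors $R_\varepsilon$ do \emph{not} converge in $\mathcal{C}^0$, and Prop.~\ref{prop: f} gives no control on the linearized flow. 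Indeed, for a $\mathcal{C}^{1,1}$ metric $\exp^N$ is only known to be locally Lipschitz, with an a.e.-defined Jacobian by Rademacher, and pointwise a.e.\ convergence of the approximate Jacobians is precisely the kind of second-order statement that this regularity class does not afford (one also cannot straightforwardly transport a.e.\ convergence of $R_\varepsilon$ along the $\varepsilon$-dependent geodesics $\gamma_v^\varepsilon$). This is exactly why the paper works downstream with characteristic functions, which requires only the $\mathcal{C}^0$-level inputs: Prop.~\ref{prop: f}, the length estimates from \citep[Lem.~4.2]{hawkingc11}, and Lemma~\ref{lem:Limit curve}. A secondary unaddressed point is that $s^+_{\varepsilon,\Sigma}$ lives on $S^+_\varepsilon N_\varepsilon\Sigma$ rather than on $S^+N\Sigma$, so a reparametrization via $\mathbf n_\varepsilon\leftrightarrow\mathbf n$ is needed before the integrands can be compared on a fixed domain; this is fixable, but the Jacobian convergence is not, at least not by the argument you give.
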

\begin{proof}
From Prop.\ \ref{prop: f} and Lem.\ \ref{lem: starting vectors ompact}
it follows in a similar way as in the beginning of the proof of Prop.\
\ref{prop: area for approx} that $\bigcup_{0\leq\varepsilon\leq\varepsilon_{0}}\bar{B}_{\varepsilon,A}^{+}(T)$
(where $\bar{B}_{A}^{+}(t):=\{p\in I^{+}(\Sigma):\exists q\in A\, s.t.\,\tau_{\Sigma}(p)=d(p,q)\leq t\}$
for $t>0$) is contained in the compact set $K=f([0,\varepsilon_{0}]\times[0,T]\times K_{\varepsilon_{0}})$
(with $f$ as in Prop.\ \ref{prop: f}). Now fix $0<t\leq T$ then
$B_{\varepsilon,A}^{+}(t)\subset\bar{B}_{\varepsilon,A}^{+}(T)\subset K$
for all $0\leq\varepsilon\leq\varepsilon_{0}$ and $B_{\varepsilon,A}^{+}(t)\subset I_{\varepsilon}^{+}(\Sigma)\subset I^{+}(\Sigma)$
(note that we chose $g_{\varepsilon}$ such that $g_{\varepsilon}\prec g$).
So it only remains to show that the functions $\chi_{B_{\varepsilon,A}^{+}(t)}\,\sqrt{\left|\det g_{\varepsilon,ij}\right|}\to\chi_{B_{A}^{+}(t)}\,\sqrt{\left|\det g_{ij}\right|}$
pointwise almost everywhere on $K\cap I^{+}(\Sigma)$ and then apply
dominated convergence. This is clear for $\sqrt{\left|\det g_{\varepsilon,ij}\right|}$,
so we only have to look at the characteristic functions. 

First note that $\mu(\mathrm{Cut}^{+}(\Sigma))=0$ by Prop.\ \ref{prop:cut locus has measure zero},
so it suffices to show convergence a.e.\ on $(K\cap I^{+}(\Sigma))\setminus\mathrm{Cut}^{+}(\Sigma)$.
For any $p\in(K\cap I^{+}(\Sigma))\setminus\mathrm{Cut}^{+}(\Sigma)$
there exists a \emph{unique} (up to reparametrization) causal curve
$\gamma^{p}$ maximizing the distance from $p$ to $\Sigma$ (and
this curve is a geodesic starting orthogonally to $\Sigma$): Existence
follows from Lem.\ \ref{prop: time sep continuous} and if there
were two different maximizing geodesics none of them could be maximizing past $p$ (since locally
any maximizing timelike curve has to be an unbroken geodesic, see
\citep[Thm.~6]{Minguzzi_convexnbhdsLipConSprays}) and hence $p\in\mathrm{Cut}^{+}(\Sigma)$
by the definition of the cut locus. This allows us to split $(K\cap I^{+}(\Sigma))\setminus\mathrm{Cut}^{+}(\Sigma)$
into five (not necessarily mutually distinct) subsets:
\begin{enumerate}
\item we have $L\left(\gamma^{p}\right)<t$ and $\gamma^{p}(0)\in A^{\circ}$
(where $A^{\circ}$ is the interior of $A$ as a subset of $\Sigma$),
i.e.\ $p\in B_{A^{\circ}}^{+}(t)$, or
\item $L\left(\gamma^{p}\right)$ arbitrary and $\gamma^{p}(0)\notin A$,
in particular $p\notin\bar{B}_{A}^{+}(t)$, or
\item $L\left(\gamma^{p}\right)>t$ and $\gamma^{p}(0)$ arbitrary, so again
$p\notin\bar{B}_{A}^{+}(t)$, or
\item $L\left(\gamma^{p}\right)=t$ and $\gamma^{p}(0)\in A$, i.e.\ $p\in S_{A}^{+}(t)$,
or
\item $L\left(\gamma^{p}\right)\leq t$ and $\gamma^{p}(0)\in\partial A=A\setminus A^{\circ}$,
i.e.\ $p\in\bar{B}_{\partial A}^{+}(t)$
\end{enumerate}
We now show that in cases $\left(1\right)-\left(3\right)$ the characteristic
functions converge in $p$:

In case $\left(3\right)$ we have $L_{g}(\gamma^{p})>t$. But then
for $\varepsilon$ small this $\gamma^{p}$ is also $g_{\varepsilon}$
timelike and Lem.\ 4.2 from \citep{hawkingc11} gives that for any
small $\delta>0$ there exists $\varepsilon_0$ such that for
all $\varepsilon\leq\varepsilon_0$
\begin{equation}
\tau_{\varepsilon,\Sigma}(p)\geq L_{\varepsilon}(\gamma^{p})>L_{g}(\gamma^{p})-\delta>t.\label{eq:length estimate 1}
\end{equation}
 Thus $p\notin\bar{B}_{\varepsilon,A}^{+}(t)$ for $\varepsilon$
small.

Now for case $\left(1\right)$, let $p\in B_{A^{\circ}}^{+}(t)\subset B_{A}^{+}(t)$.
Let $\gamma_{\varepsilon}$ be a $g_{\varepsilon}$-geodesic between
$q_{\varepsilon}\in\Sigma$ and $p$ with $L_{\varepsilon}(\gamma_{\varepsilon})=\tau_{\varepsilon,\Sigma}(p)$.
From $q_{\varepsilon}\in J^{-}(p)\cap\Sigma$, it follows that $\gamma_{\varepsilon}\subset J^{-}(p)\cap J^{+}(J^{-}(p)\cap\Sigma)$
for all $\varepsilon$, which is compact by Rem.\ \ref{rem: J-capSig compact}.
This allows us to use Lem.\ 4.2 from \citep{hawkingc11} to obtain
that for any small $\delta>0$ there exists $\varepsilon_{0}$ such
that 
\begin{equation}
\tau_{\varepsilon,\Sigma}(p)=L_{\varepsilon}(\gamma_{\varepsilon})<L_{g}(\gamma_{\varepsilon})+\delta\leq\tau_{\Sigma}(p)+\delta.\label{eq:length estimate}
\end{equation}
 for all $\varepsilon\leq\varepsilon_{0}$. This shows that if
$\tau_{\Sigma}(p)<t$, then $\tau_{\varepsilon,\Sigma}(p)<t$ for
small $\varepsilon$. Now let $U\subset A^{\circ}$ be a neighborhood
of $\gamma^{p}(0)$ in $\Sigma$. It remains to show that $q_{\varepsilon}\in U\subset A^{\circ}$
for small $\varepsilon$. Assume the contrary and let $\gamma_{\varepsilon_{j}}$
be a subsequence with $q_{\varepsilon_{j}}\notin U$. By our limit
curve Lemma \ref{lem:Limit curve}, we may assume (after reparametrizing
and passing to a further subsequence) that $\gamma_{\varepsilon_{j}}$
converges to a causal curve $\tilde{\gamma}$ from $q:=\tilde{\gamma}(0)=\lim q_{\varepsilon_{j}}\notin U$
to $p$ with $L_{g}(\tilde{\gamma})\geq\limsup_{j\to\infty}L_{g}(\gamma_{\varepsilon_{j}})$.
Using (\ref{eq:length estimate}) and (\ref{eq:length estimate 1})
gives
\[
L_{g}(\tilde{\gamma})\geq\limsup_{j\to\infty}L_{g}(\gamma_{\varepsilon_{j}})\geq\limsup_{j\to\infty}\tau_{\varepsilon_{j},\Sigma}(p)-\delta\geq L_{g}(\gamma^{p})-2\delta=\tau_{\Sigma}(p)-2\delta
\]
for any $\delta>0$ and letting $\delta\to0$ shows that $\tilde{\gamma}$
is also maximizing the distance between $p$ and $\Sigma$, giving
a contradiction, since $\tilde{\gamma}(0)\neq\gamma^{p}(0)$ but $\gamma^{p}$
is the unique causal curve realizing the distance from $\Sigma$ to
$p$ by definition. Altogether, $p\in B_{\varepsilon,A^{\circ}}^{+}(t)\subset B_{\varepsilon,A}^{+}(t)$
for small enough $\varepsilon$.

Next we look at case $\left(2\right)$, i.e., $\gamma^{p}(0)\notin A$
(and thus $p\notin B_{A}^{+}(t)$). Let $U$ be a neighborhood of
$\gamma^{p}(0)$ in $\Sigma$ with $U\cap A=\emptyset$ (this exists
since $A$ is closed). By the argument presented when dealing with
case $\left(1\right)$, we have that for $\varepsilon$ small enough
$\gamma_{\varepsilon}(0)\in U$, hence not in $A$ and so $p\notin B_{\varepsilon,A}^{+}(t)$.

It remains deal with cases $\left(4\right)$ and $\left(5\right)$.
Here we show that both $S_{A}^{+}(t)$ and $\bar{B}_{\partial A}^{+}(t)$
are contained in sets of measure zero.

Regarding $S_{A}^{+}(t)$, let $\tilde{\mathbf{n}}$ be a $\mathcal{C}^{1,1}$-extension
of $\mathbf{n}$ to some small neighborhood $U$ of $A$ (in $M$)
and consider the map $h:\, p\mapsto\exp_{p}(t\tilde{\mathbf{n}}(p))$.
For $U$ small enough this is well defined on $U$ (by a standard
ODE argument) and because the exponential map is locally Lipschitz
continuous, this map is as well. Now since $S_{A}^{+}(t)\subset h(A)$,
$\mu(A)=0$ (because $A\subset\Sigma$), $A$ is compact and any Lipschitz
map from $\mathbb{R}^{n}\to\mathbb{R}^{n}$ maps sets of (Lebesgue-)measure
zero to sets of measure zero (see Prop.\ \ref{prop: L stetig maps zero measure to zero}
in the appendix), we have that $h(A)$ has measure zero.

Finally, for $\bar{B}_{\partial A}^{+}(t)$, note that $\partial A\subset A$
and hence all f.d., unit-speed, normal geodesics starting in $\partial A$
exist until at least $T\geq t$, so $\partial A\times[0,t]\subset\mathcal{D}$
and $\bar{B}_{\partial A}^{+}(t)\subset\exp^{N}\left([0,t]\cdot\partial A\right)$.
Now since $[0,t]\cdot\partial A\subset N\Sigma$ has measure zero
(because by assumption $\mu_{\Sigma}(\partial A)=0$) and is compact
(by compactness of $A$) and $\exp^{N}$ is locally Lipschitz, the
desired result follows again from Prop.\ \ref{prop: L stetig maps zero measure to zero}.

Altogether this shows that indeed $\chi_{B_{\varepsilon,A}^{+}(t)}\to\chi_{B_{A}^{+}(t)}\,$
pointwise almost everywhere.
\end{proof}
We are now ready to prove Thm.\ \ref{thm: volume}.
\begin{proof}[Proof of Thm.\ \ref{thm: volume}]
Let $0<t_{1}<t_{2}\leq T$. It suffices to show that 
\[\mathrm{vol}\, B_{A}^{+}(t_{2})\leq\mathrm{vol}\, B_{A}^{+}(t_{1})\:\frac{\mathrm{vol}_{\kappa,\beta}B_{B}^{+}(t_{2})}{\mathrm{vol}_{\kappa,\beta}B_{B}^{+}(t_{1})}.
\]
By Lem.\ \ref{lem:volume convergence} we have
\[
\mathrm{vol}_{\varepsilon}\, B_{\varepsilon,A}^{+}(t)\to\mathrm{vol}\, B_{A}^{+}(t)
\]
for all $t\in(0,T]$. So using Prop.\ \ref{prop: volume for approx}
and letting $\varepsilon\to0$ shows that for all $\eta,\delta>0$
and $B_{\delta,\eta}\subset\Sigma_{\kappa-\delta,\beta+\eta}$ (with
$0<\mathrm{area}_{\kappa-\delta,\beta+\eta}B_{\delta,\eta}<\infty$)
\[
\mathrm{vol}\, B_{A}^{+}(t_{2})\leq\mathrm{vol}\, B_{A}^{+}(t_{1})\:\frac{\mathrm{vol}_{\kappa-\delta,\beta+\eta}B_{B_{\delta,\eta}}^{+}(t_{2})}{\mathrm{vol}_{\kappa-\delta,\beta+\eta}B_{B_{\delta,\eta}}^{+}(t_{1})}
\]
Now by (\ref{eq:volume convergence in comparison space}) there exist
sequences $\delta_{n},\eta_{n}\to0$ such that
\[
\mathrm{vol}_{\kappa-\delta_{n},\beta+\eta_{n}}B_{B_{n}}^{+}(t)\to\mathrm{vol}_{\kappa,\beta}B_{B}^{+}(t)\:\frac{\mathrm{area}_{\kappa-\delta_{n},\beta+\eta_{n}}B_{n}}{\mathrm{area}_{\kappa,\beta}B},
\]
for all $t>0$ which implies
\[
\frac{\mathrm{vol}_{\kappa-\delta_{n},\beta+\eta_{n}}B_{B_{n}}^{+}(t_{2})}{\mathrm{vol}_{\kappa-\delta_{n},\beta+\eta_{n}}B_{B_{n}}^{+}(t_{1})}\to\frac{\mathrm{vol}_{\kappa,\beta}B_{B}^{+}(t_{2})}{\mathrm{vol}_{\kappa,\beta}B_{B}^{+}(t_{1})}.
\]
So $t\mapsto\frac{\mathrm{vol}\, B_{A}^{+}(t)}{\mathrm{vol}_{\kappa,\beta}B_{B}^{+}(t)}$
is indeed nonincreasing on $(0,T]$.
\end{proof}

\section{\label{sec:Applications}Applications}

\subsection{\label{sub:Myers'-theorem-for}Myers' theorem for $\mathcal{C}^{1,1}$-metrics}

We will use the volume comparison result Thm.\ \ref{thm:grmov volume}
to give a proof of Myers' theorem for $\mathcal{C}^{1,1}$-metrics.
\begin{theorem}
\label{thm:myers}Let $\left(M,g\right)$ be a complete $n$-dimensional
Riemannian manifold with $\mathcal{C}^{1,1}$-metric $g$ such that
$\mathbf{Ric}\geq\left(n-1\right)\kappa\, g$ for some $\kappa>0$.
Then $\text{diam}\left(M\right)\leq\frac{\pi}{\sqrt{\kappa}}$.\end{theorem}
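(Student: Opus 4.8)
The plan is to deduce Myers' theorem from the $\mathcal{C}^{1,1}$-Bishop--Gromov comparison (Theorem~\ref{thm:grmov volume}) by the classical volume-ratio argument. Suppose, for contradiction, that $\mathrm{diam}(M)>\frac{\pi}{\sqrt\kappa}$. Then there exist $p,q\in M$ with $d_g(p,q)>\frac{\pi}{\sqrt\kappa}$, so we may pick $r>\frac{\pi}{\sqrt\kappa}$ with $q\in B_p(r)$, hence $\mathrm{vol}\,B_p(r)>0$ (it contains a nonempty open set, e.g.\ a small metric ball around $q$). On the other hand, the comparison space for $\kappa>0$ is the round sphere of radius $\frac{1}{\sqrt\kappa}$, whose diameter is $\frac{\pi}{\sqrt\kappa}$; concretely $\mathrm{sn}_\kappa(s)=\frac{1}{\sqrt\kappa}\sin(\sqrt\kappa s)$ vanishes at $s=\frac{\pi}{\sqrt\kappa}$, so
\[
\mathrm{vol}_\kappa B^\kappa(r)=c\int_0^r \mathrm{sn}_\kappa(s)^{n-1}\,ds = c\int_0^{\pi/\sqrt\kappa}\mathrm{sn}_\kappa(s)^{n-1}\,ds = \mathrm{vol}_\kappa B^\kappa\!\left(\tfrac{\pi}{\sqrt\kappa}\right)
\]
for all $r\ge \frac{\pi}{\sqrt\kappa}$, i.e.\ the comparison volume is finite and constant beyond $\frac{\pi}{\sqrt\kappa}$.

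Now I would feed this into the monotonicity statement of Theorem~\ref{thm:grmov volume}: the function $r\mapsto \frac{\mathrm{vol}\,B_p(r)}{\mathrm{vol}_\kappa B^\kappa(r)}$ is nonincreasing on $(0,\infty)$. Pick $0<r_1<\frac{\pi}{\sqrt\kappa}<r_2<r$ with $q\in B_p(r_1)$ (possible since $d_g(p,q)$ can be approached; more simply, choose any $r_1$ with $\mathrm{vol}\,B_p(r_1)>0$, which certainly holds for $r_1$ slightly larger than $\frac12 d_g(p,q)>\frac{\pi}{2\sqrt\kappa}$ once $\mathrm{diam}(M)>\pi/\sqrt\kappa$, or just use that $B_p(r_1)$ is a nonempty open set for any $r_1>0$). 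Monotonicity gives
\[
\frac{\mathrm{vol}\,B_p(r_1)}{\mathrm{vol}_\kappa B^\kappa(r_1)} \ \geq\ \frac{\mathrm{vol}\,B_p(r_2)}{\mathrm{vol}_\kappa B^\kappa(r_2)}.
\]
Since $M$ is complete, $B_p(r_1)\subsetneq B_p(r_2)$ as sets and the inclusion is strict if $\mathrm{diam}(M)>r_2>r_1$ (there is a point at distance between $r_1$ and $r_2$ from $p$, which has a neighborhood of positive volume inside $B_p(r_2)\setminus B_p(r_1)$), so $\mathrm{vol}\,B_p(r_2)>\mathrm{vol}\,B_p(r_1)$. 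But $\mathrm{vol}_\kappa B^\kappa(r_1)<\mathrm{vol}_\kappa B^\kappa(r_2)=\mathrm{vol}_\kappa B^\kappa(\pi/\sqrt\kappa)$ since $r_1<\pi/\sqrt\kappa$ and $\mathrm{sn}_\kappa>0$ on $(0,\pi/\sqrt\kappa)$; combining, the right-hand ratio is \emph{strictly larger} than the left-hand one, a contradiction. Hence $\mathrm{diam}(M)\le\frac{\pi}{\sqrt\kappa}$.

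The only genuinely delicate point is making sure the strict inequalities above really hold, i.e.\ that assuming $\mathrm{diam}(M)>\pi/\sqrt\kappa$ produces a region $B_p(r_2)\setminus B_p(r_1)$ of positive volume; this is where completeness is used, via the Hopf--Rinow theorem (every point within distance $r$ of $p$ is joined to $p$ by a minimizing geodesic, so if the diameter exceeds $r_2$ there are points at every distance in $(r_1,r_2)$ from $p$, each sitting in an open ball of positive volume contained in $B_p(r_2)$ but disjoint from $B_p(r_1)$). Everything else is the elementary computation of $\mathrm{vol}_\kappa B^\kappa(r)$ and the observation that it is constant for $r\ge\pi/\sqrt\kappa$. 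Note also that, unlike the smooth proof via conjugate points along minimizing geodesics, this argument never needs the geodesic equation at second order and so is insensitive to the low regularity -- all the regularity issues have already been absorbed into Theorem~\ref{thm:grmov volume}.
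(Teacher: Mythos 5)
Your overall route is the right one and matches the paper's: apply the $\mathcal{C}^{1,1}$ Bishop--Gromov result (Theorem~\ref{thm:grmov volume}), note that $\mathrm{vol}_{\kappa}B^{\kappa}(r)$ is finite and constant for $r\geq\pi/\sqrt{\kappa}$, and use completeness (Hopf--Rinow) to manufacture a positive-volume annulus. However, the final inference in your argument is incorrect, and there is a genuine logical gap.

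You choose $0<r_{1}<\pi/\sqrt{\kappa}<r_{2}$ and then note that $\mathrm{vol}\,B_{p}(r_{2})>\mathrm{vol}\,B_{p}(r_{1})$ and $\mathrm{vol}_{\kappa}B^{\kappa}(r_{2})>\mathrm{vol}_{\kappa}B^{\kappa}(r_{1})$. From these two strict inequalities you conclude that
\[
\frac{\mathrm{vol}\,B_{p}(r_{2})}{\mathrm{vol}_{\kappa}B^{\kappa}(r_{2})}>\frac{\mathrm{vol}\,B_{p}(r_{1})}{\mathrm{vol}_{\kappa}B^{\kappa}(r_{1})},
\]
contradicting monotonicity. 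But this does not follow: if both the numerator and the denominator increase strictly, the ratio may increase, decrease, or stay the same (e.g.\ $1/1$ and $2/4$). The monotonicity statement of Theorem~\ref{thm:grmov volume} is perfectly consistent with your pair of inequalities, so no contradiction has been reached. (Incidentally, the parenthetical ``with $q\in B_{p}(r_{1})$'' is impossible under your constraints, since $d(p,q)>\pi/\sqrt{\kappa}>r_{1}$; you seem to notice this, but the replacement you propose does not repair the main issue.)

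The repair is to take $r_{1}=\pi/\sqrt{\kappa}$ rather than $r_{1}<\pi/\sqrt{\kappa}$. Then $\mathrm{vol}_{\kappa}B^{\kappa}(r_{1})=\mathrm{vol}_{\kappa}B^{\kappa}(r_{2})$, so the monotone ratio forces $\mathrm{vol}\,B_{p}(r_{2})\leq\mathrm{vol}\,B_{p}(r_{1})$ for every $r_{2}\geq\pi/\sqrt{\kappa}$; since $r\mapsto\mathrm{vol}\,B_{p}(r)$ is trivially nondecreasing, it must be \emph{constant} on $[\pi/\sqrt{\kappa},\infty)$. Now the existence of a point $q$ with $d_{g}(p,q)>\pi/\sqrt{\kappa}$, together with completeness and continuity of $d_{g}(\cdot,p)$, produces an open set of positive measure contained in $B_{p}(d(p,q)+1)\setminus B_{p}(\pi/\sqrt{\kappa})$, contradicting constancy. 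This is essentially the proof given in the paper.
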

\begin{proof}
Let $S_{\kappa}^{n}$ be the $n$-dimensional sphere of radius $\kappa$
with the standard metric, then $S_{\kappa}^{n}$ has constant sectional
curvature $\kappa$ and $\text{diam}\left(S_{\kappa}^{n}\right)=\frac{\pi}{\sqrt{\kappa}}$.
Clearly $\mathrm{vol}_{\kappa}B^{\kappa}(r)$ is constant in $r$
for $r\geq\frac{\pi}{\sqrt{\kappa}}$. By Thm.\ \ref{thm:grmov volume}
this implies that also
\[
r\mapsto\mathrm{vol}B_{p}(r)
\]
is constant for all $r\geq\frac{\pi}{\sqrt{\kappa}}$ and all $p\in M$.
Fix $p$ and assume there exists $q\in M$ with $d(p,q)>\frac{\pi}{\sqrt{k}}$.
Then by continuity of $d(.,p)$ we find a neighborhood $U$ of $q$
with $\mu(U)\neq0$ such that $d(p,q)+1>d(x,p)>\frac{\pi}{\sqrt{\kappa}}$
for all $x\in U$, so $U\subset B_{p}(d(p,q)+1)$ and $U\cap B_{p}(\frac{\pi}{\sqrt{\kappa}})=\emptyset$.
But this shows that
\[
\mathrm{vol}B_{p}(d(p,q)+1)>\mathrm{vol}B_{p}(\frac{\pi}{\sqrt{\kappa}}),
\]
contradicting $r\mapsto\mathrm{vol}B_{p}(r)$ being constant for all
$r\geq\frac{\pi}{\sqrt{\kappa}}$.
\end{proof}
This result is not very surprising since it is known that there are
generalizations of Myers' theorem even for metric measure spaces (see
Cor.\ 2.6 in \citep{sturm_metricMeasureII}). However, these do not
immediately imply Thm.\ \ref{thm:myers} above, because for metric
measure spaces the needed curvature bound is (by necessity) defined
in a different manner from $\mathbf{Ric}\geq\left(n-1\right)\kappa\, g$
in $L_{\mathrm{loc}}^{\infty}$.

\subsection{\label{sub:Hawking's-singularity-theorem}Hawking's singularity theorem
for $\mathcal{C}^{1,1}$-metrics}

We first show a general result concerning geodesic incompleteness of
globally hyperbolic manifolds.
\begin{theorem}
\label{thm:tau less b_kapa_beta (gen hawking)}Assume that $\left(M,g,\Sigma\right)$
(with $g\in\mathcal{C}^{1,1}$) is globally hyperbolic and satisfies
the $CCC(\kappa,\beta)$ condition with either 
\begin{enumerate}
\item $\kappa>0$, $\beta\in\mathbb{R}$,
\item $\kappa=0$, $\beta<0$ or
\item $\kappa<0$, $\beta<0$ such that $\frac{\beta}{(n-1)\,\sqrt{\left|\kappa\right|}}<-1$.
\end{enumerate}
Then $\tau_{\Sigma}(p)\leq b_{\kappa,\beta}<\infty$ for all $p\in I^{+}(\Sigma)$
and $\left(M,g\right)$ is timelike future geodesically incomplete.\end{theorem}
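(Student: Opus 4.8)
The plan is to prove the bound $\tau_\Sigma\le b_{\kappa,\beta}$ on $I^+(\Sigma)$ by contradiction using the volume comparison Theorem~\ref{thm: volume}, and then read off future timelike geodesic incompleteness. Note first that in each of the cases (1)--(3) one has $b_{\kappa,\beta}<\infty$ with $\lim_{t\nearrow b_{\kappa,\beta}}f_{\kappa,\beta}(t)=0$ (Table~\ref{tab:Warping-functions-for}), so $\tilde f_{\kappa,\beta}$ is bounded and continuous on $[0,b_{\kappa,\beta}]$ and vanishes on $[b_{\kappa,\beta},\infty)$. Suppose then $\tau_\Sigma(p)=:\ell>b_{\kappa,\beta}$ for some $p\in I^+(\Sigma)$. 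By Prop.~\ref{prop: time sep continuous} there is a maximizing, future directed, unit speed timelike geodesic $\gamma:[0,\ell]\to M$ starting orthogonally to $\Sigma$ at $q:=\gamma(0)$. I would then fix $T$ with $b_{\kappa,\beta}<T<\ell$ and, using openness of the domain of the geodesic flow together with continuity of $\mathbf n$ (and $S^+NA=\mathbf n(A)$), choose a closed coordinate ball $A\subset\Sigma$ centered at $q$ — so $\mu_\Sigma(\partial A)=0$ automatically — small enough that every future directed unit speed geodesic starting orthogonally to $A$ exists until at least $T$; also fix any $B\subset\Sigma_{\kappa,\beta}$ of finite nonzero area.

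Theorem~\ref{thm: volume} then says that $t\mapsto \mathrm{vol}\,B_A^+(t)\big/\mathrm{vol}_{\kappa,\beta}B_B^+(t)$ is nonincreasing on $(0,T]$. Using $\mathrm{vol}_{\kappa,\beta}B_B^+(t)=\mathrm{area}_{\kappa,\beta}B\cdot\int_0^t\big(\tilde f_{\kappa,\beta}(s)/f_{\kappa,\beta}(0)\big)^{n-1}\,ds$ and the properties of $\tilde f_{\kappa,\beta}$ above, the denominator is a finite, strictly positive constant $V_0$ for all $t\ge b_{\kappa,\beta}$, so $\mathrm{vol}\,B_A^+(T)\le\mathrm{vol}\,B_A^+(b_{\kappa,\beta})$. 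Since $B_A^+(b_{\kappa,\beta})\subseteq B_A^+(T)$ and both have finite volume (they lie in a common compact set, as in the proof of Lem.~\ref{lem:volume convergence}), this forces $B_A^+(T)\setminus B_A^+(b_{\kappa,\beta})$ to have measure zero.

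The contradiction then comes from showing this difference set contains a nonempty open set. Fix $s_0\in(b_{\kappa,\beta},T)$. Concatenating an arbitrary causal curve from $\Sigma$ to $\gamma(s_0)$ with $\gamma|_{[s_0,\ell]}$ and comparing lengths with $\tau_\Sigma(\gamma(\ell))=\ell$ gives $\tau_\Sigma(\gamma(s_0))=s_0$, and the maximizer from $\Sigma$ to $\gamma(s_0)$ must be unique (namely $\gamma|_{[0,s_0]}$, with footpoint $q\in A^{\circ}$), since otherwise $\gamma$ could not maximize past $\gamma(s_0)$, by the argument in the proof of Lem.~\ref{lem:volume convergence}. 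I would then run a limit curve argument with Lem.~\ref{lem:Limit curve} (as in the proofs of Prop.~\ref{prop: time sep continuous} and Lem.~\ref{lem:volume convergence}): if $x_j\to\gamma(s_0)$, the footpoints of maximizers to $x_j$ subconverge to the footpoint of a maximizer to $\gamma(s_0)$, i.e.\ to $q\in A^{\circ}$, so together with continuity of $\tau_\Sigma$ there is an open neighborhood $W\ni\gamma(s_0)$ with $W\subseteq B_A^+(T)$. On the other hand, continuity of $\tau_\Sigma$ gives $\overline{B_A^+(b_{\kappa,\beta})}\subseteq\{\tau_\Sigma\le b_{\kappa,\beta}\}$ while $\tau_\Sigma(\gamma(s_0))=s_0>b_{\kappa,\beta}$; hence $\mathrm{int}\,B_A^+(T)\setminus\overline{B_A^+(b_{\kappa,\beta})}$ is a nonempty open, thus positive measure, subset of the measure-zero set $B_A^+(T)\setminus B_A^+(b_{\kappa,\beta})$ — contradiction. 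This establishes $\tau_\Sigma\le b_{\kappa,\beta}$ on $I^+(\Sigma)$. Incompleteness then follows immediately: if all future timelike geodesics existed for all parameter values, then for $q\in\Sigma$ the geodesic $t\mapsto\exp^N(t\,\mathbf n(q))$ is future directed timelike, and its value at $t=b_{\kappa,\beta}+1$ would lie in $I^+(\Sigma)$ with $\tau_\Sigma\ge b_{\kappa,\beta}+1$, contradicting the bound.

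The step I expect to be the main obstacle is $\gamma(s_0)\in\mathrm{int}\,B_A^+(T)$ — i.e.\ that the footpoints of maximizers depend continuously on the endpoint near a point off the cut locus. This is precisely where the $\mathcal{C}^{1,1}$-specific inputs enter: the limit curve lemma (Lem.~\ref{lem:Limit curve}) and the uniqueness of maximizers off $\mathrm{Cut}^+(\Sigma)$, the usual conjugate-point characterization of cut points being unavailable in this regularity. The remaining ingredients — the behaviour of the comparison warping functions near $b_{\kappa,\beta}$ (Table~\ref{tab:Warping-functions-for}, Lem.~\ref{lem:convergence of f tilde}) and the elementary measure-theoretic bookkeeping — are routine.
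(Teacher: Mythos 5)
Your proof is correct and follows the same overall strategy as the paper: invoke the volume comparison theorem (Thm.~\ref{thm: volume}) on a compact $A\subset\Sigma$ and derive a contradiction with monotonicity by locating a positive-measure subset of $B_A^+(T)\setminus B_A^+(b_{\kappa,\beta})$ near a point off the cut locus.

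The one genuine (if minor) difference is in how you find a point at which the maximizer from $\Sigma$ is unique. The paper first perturbs $p$ to a nearby $\tilde p\notin\mathrm{Cut}^+(\Sigma)$ with $\tau_\Sigma(\tilde p)>b_{\kappa,\beta}$, using continuity of $\tau_\Sigma$ together with Prop.~\ref{prop:cut locus has measure zero}, and then argues (somewhat implicitly) that the maximizing geodesic through $\tilde p$ extends a little beyond $\tau_\Sigma(\tilde p)$. You instead take an \emph{interior} point $\gamma(s_0)$, $b_{\kappa,\beta}<s_0<\ell=\tau_\Sigma(p)$, of the maximizer $\gamma$ to $p$, and observe that uniqueness of the maximizer to $\gamma(s_0)$ (hence $\gamma(s_0)\notin\mathrm{Cut}^+(\Sigma)$) follows directly because a broken geodesic cannot maximize past the break (\citep[Thm.~6]{Minguzzi_convexnbhdsLipConSprays}), and that $\gamma$ automatically extends past $s_0$. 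This avoids a second, separate appeal to the measure-zero cut-locus result at the level of this theorem (it is of course still used inside Lem.~\ref{lem:volume convergence}), and it sidesteps the small subtlety about the cut function that the paper's choice of $\tilde p$ relies on. Your limit-curve argument for $\gamma(s_0)\in\mathrm{int}\,B_A^+(T)$ and the final accounting ($B_A^+(T)\setminus B_A^+(b_{\kappa,\beta})$ would be a measure-zero set with nonempty interior) are equivalent, up to presentation, to the paper's ``$\tilde U\subset B_A^+(T)\setminus B_A^+(b_{\kappa,\beta})$ has positive measure, so $\mathrm{vol}\,B_A^+(T)>\mathrm{vol}\,B_A^+(b_{\kappa,\beta})$'' step. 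The incompleteness deduction at the end also matches the paper's.
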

\begin{proof}
First note that for these values of $\kappa$ and $\beta$ we have
$b_{\kappa,\beta}<\infty$ (see Table \ref{tab:Warping-functions-for}),
so $\tau_{\Sigma}(p)\leq b_{\kappa,\beta}$ for all $p\in I^{+}(\Sigma)$
implies $L\left(\gamma\right)\leq b_{\kappa,\beta}$ for all timelike,
f.d.\ geodesics $\gamma$ starting in $\Sigma$, which implies incompleteness
of $M$.

Now assume to the contrary that there exists $p\in I^{+}(\Sigma)$
with $\tau_{\Sigma}(p)>b_{\kappa,\beta}$. We first argue that we
may w.l.o.g.\ assume $p\notin\mathrm{Cut}^{+}(\Sigma)$: By continuity
of $\tau_{\Sigma}$ (see Lem.\ \ref{prop: time sep continuous})
there is a neighborhood $U$ of $p$ such that $\tau_{\Sigma}(q)>b_{\kappa,\beta}$
for all $q\in U$ and since $\mathrm{Cut}^{+}(\Sigma)$ has measure
zero (see \ref{prop:cut locus has measure zero}) but $U$ does not
there exists $\tilde{p}\notin\mathrm{Cut}^{+}(\Sigma)$ with $\tau_{\Sigma}(\tilde{p})>b_{\kappa,\beta}$.

Now, if we have $p\notin\mathrm{Cut}^{+}(\Sigma)$ then, by the same
argument as in the proof of Lem.\ \ref{lem:volume convergence},
there exists a \emph{unique} unit-speed geodesic $\gamma^{p}$ from
$\gamma^{p}(0)\in\Sigma$ to $p$ with $L\left(\gamma^{p}\right)=\tau_{\Sigma}(p)>b_{\kappa,\beta}$
(and this geodesic has to start orthogonally to $\Sigma$ by Lem.\
\ref{prop: time sep continuous}). In particular, $\gamma^{p}$ exists
until at least some $T>\tau_{\Sigma}(p)>b_{\kappa,\beta}$. Let $A$
be a neighborhood of $\gamma^{p}(0)$ in $\Sigma$ such that all unit-speed
geodesics starting in $A$ orthogonally to $\Sigma$ also exist until
at least $T$. We may choose $A$ to be compact with $\mu_{\Sigma}(\partial A)=0$
(e.g.\ as the pre-image of a small, closed ball in $\mathbb{R}^{n-1}$
under a chart of $\Sigma$). 

We now show that there exists a neighborhood $U$ of $p$ such that
for any $q\in\tilde{U}:=U\setminus\mathrm{Cut}^{+}(\Sigma)$ we have
$b_{\kappa,\beta}<\tau_{\Sigma}(q)<T$ (which follows immediately
from continuity of $\tau_{\Sigma}$) and that the \emph{unique} unit-speed
geodesic $\gamma^{q}$ from $\gamma^{q}(0)\in\Sigma$ to $q$ with
$L\left(\gamma^{q}\right)=\tau_{\Sigma}(q)$ satisfies $\gamma^{q}(0)\in A$.
This is done via contradiction in a similar way to case $(1)$ in
the proof of Lem.\ \ref{lem:volume convergence}: Let $p^{+}\in I^{+}(p)$,
then there exists a small neighborhood $U$ of $p$ such that $\gamma^{q}(0)\in J^{-}(p^{+})\cap\Sigma$
for all $q\in\tilde{U}$. Assume there exist $p_{j}\in\tilde{U}$
with $p_{j}\to p$ but $\gamma^{p_{j}}(0)\notin A$. Then $\gamma^{p_{j}}\subset J^{-}(p^{+})\cap J^{+}(J^{-}(p^{+})\cap\Sigma)$
and since this set is compact by Rem.\ \ref{rem: J-capSig compact}
our limit curve Lemma \ref{lem:Limit curve} shows that there exists
$\tilde{\gamma}$ with $p=\tilde{\gamma}(1)$ and $\tilde{\gamma}(0)\neq\gamma^{p}(0)$
and
\[
\tau(\tilde{\gamma}(0),p)=L(\tilde{\gamma})\geq\limsup_{j\to\infty}L(\gamma^{p_{j}})=\limsup_{j\to\infty}\tau_{\Sigma}(p_{j})=\tau_{\Sigma}(p),
\]
by continuity of $\tau_{\Sigma}$ (see Lem.\ \ref{prop: time sep continuous}).
So $\tilde{\gamma}$ is also maximizing the distance between $p$
and $\Sigma$, but this a contradiction since $\tilde{\gamma}\neq\gamma^{p}$
and $\gamma^{p}$ was unique since $p\notin\mathrm{Cut}^{+}(\Sigma)$.

We now apply Thm.\ \ref{thm: volume} to obtain that
\[
t\mapsto\frac{\mathrm{vol}\, B_{A}^{+}(t)}{\mathrm{vol}_{\kappa,\beta}B_{B}^{+}(t)}
\]
is nonincreasing on $(0,T]$. Now the set $\tilde{U}$ from above
satisfies $\mu(\tilde{U})\neq0$ and $\tilde{U}\subset B_{A}^{+}(T)$
but $\tilde{U}\cap B_{A}^{+}(b_{\kappa,\beta})=\emptyset$, hence
$\mathrm{vol}\, B_{A}^{+}(T)>\mathrm{vol}\, B_{A}^{+}(b_{\kappa,\beta})$.
On the other hand, {$\mathrm{vol}_{\kappa,\beta}B_{B}^{+}(t)$}
remains constant in $t$ for $t\geq b_{\kappa,\beta}$ by construction
of the comparison spaces. But then
\[
\frac{\mathrm{vol}\, B_{A}^{+}(T)}{\mathrm{vol}_{\kappa,\beta}B_{B}^{+}(T)}>\frac{\mathrm{vol}\, B_{A}^{+}(b_{\kappa,\beta})}{\mathrm{vol}_{\kappa,\beta}B_{B}^{+}(b_{\kappa,\beta})}
\]
which is a contradiction to $t\mapsto\frac{\mathrm{vol}\, B_{A}^{+}(t)}{\mathrm{vol}_{\kappa,\beta}B_{B}^{+}(t)}$
being nonincreasing on $(0,T]$.\end{proof}
\begin{rem}
If $\mathbf{Ric}\geq\kappa(n-1)g$ with $\kappa>0$ the mean curvature
of $\Sigma$ is irrelevant, hence any globally hyperbolic spacetime
satisfying such a curvature bound is necessarily geodesically incomplete:
By \citep[Thm.~4.5]{clemensGlobHYp} there exists a smooth metric
$g'\succ g$ such that $\left(M,g'\right)$ is globally hyperbolic
as well and by \citep[Thm.~1.1]{bernalSanchez_globHypSplitting}
there exists a smooth, spacelike Cauchy hypersurface $\Sigma$ for
$g'$. This $\Sigma$ is then necessarily acausal (\citep[Lem.~14.29 and 14.42]{ONeill_SRG})
and FCC (see \citep[Rem.~1]{TG}) and thus also a smooth, spacelike,
acausal, FCC Cauchy hypersurface $\Sigma$ for $g$ (by arguments
similar to the ones in Lem.\ \ref{lem: FCC}) and $\tau_{\Sigma}\leq\frac{\pi}{\sqrt{\kappa}}$:
On every compact subset $A\subset\Sigma$ the mean curvature is bounded
from above by some $\beta\in\mathbb{R}$ (and this is all that is
actually needed to show Thm.\ \ref{thm: volume} for this fixed $A$)
and since $b_{\kappa,\beta}\nearrow\frac{\pi}{\sqrt{\kappa}}$ for
$\beta\to\infty$ one arrives at a contradiction by the same construction
as in Thm.\ \ref{thm:tau less b_kapa_beta (gen hawking)}. This even
shows that $L(\gamma)\leq2\frac{\pi}{\sqrt{\kappa}}$ for any timelike
curve $\gamma$ since any inextendible timelike curve must meet $\Sigma$.
Of course, the smooth version of this result is well-known and can
be proven without this detour (\citep[Thm.~11.9]{BEE96}).
\end{rem}
If $\left(M,g\right)$ is not globally hyperbolic, we cannot apply
Thm.\ \ref{thm: volume} directly, but if $\left(M,g,\Sigma\right)$
satisfies $CCC(\kappa,\beta)$ with $\kappa,\beta$ as in Thm.\ \ref{thm:tau less b_kapa_beta (gen hawking)}
and $\Sigma$ is additionally compact we can still use it to prove
compactness of the Cauchy development $D^{+}(\Sigma)$.
\begin{lem}
Let $\left(M,g,\Sigma\right)$ with $g\in\mathcal{C}^{1,1}$ satisfy
$CCC(\kappa,\beta)$ with $\kappa,\beta$ as in Thm.\ \ref{thm:tau less b_kapa_beta (gen hawking)}
and $\Sigma$ compact. If $\left(M,g\right)$ is future geodesically
complete then $D^{+}(\Sigma)$ is relatively compact.\end{lem}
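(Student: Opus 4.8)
The plan is to show that $\overline{D^{+}(\Sigma)}$ is contained in the compact set $\exp^{N}(K)$, where $\exp^{N}$ denotes the $g$-normal exponential map of $\Sigma$ and
\[
K:=\left\{ tv:\,v\in S^{+}N\Sigma,\ 0\leq t\leq b_{\kappa,\beta}\right\} .
\]
Since $\Sigma$ is compact, $S^{+}N\Sigma=\mathbf{n}(\Sigma)$ is compact, and $b_{\kappa,\beta}<\infty$ for the admissible pairs $(\kappa,\beta)$ (see Table \ref{tab:Warping-functions-for}), so $K$ is compact. Future geodesic completeness of $(M,g)$ guarantees that every future directed unit timelike geodesic, in particular every $\gamma_{v}$ with $v\in S^{+}N\Sigma$, exists for all positive parameter values, so $\exp^{N}$ is defined on all of $K$; as $g\in\mathcal{C}^{1,1}$ it is locally Lipschitz (cf.\ Prop.\ \ref{prop: f}), hence $\exp^{N}(K)$ is compact, in particular closed. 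So it suffices to prove $D^{+}(\Sigma)\subseteq\exp^{N}(K)$.

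First I would pass to the interior of the Cauchy development. Since $\Sigma$ is acausal and FCC, $\mathrm{edge}(\Sigma)=\emptyset$ by Rem.\ \ref{rem: FCC things}, so $\widetilde{M}:=\mathrm{int}\,D(\Sigma)$ is a globally hyperbolic open neighbourhood of $\Sigma$ in $M$ with Cauchy hypersurface $\Sigma$ (standard causality theory, cf.\ \citep[Ch.~14]{ONeill_SRG}; this carries over to $\mathcal{C}^{1,1}$-metrics using \citep{clemensGlobHYp,KSSV}). In $\widetilde{M}$ the hypersurface $\Sigma$ is still smooth, spacelike and acausal, and it is FCC there since it is compact (hence closed in $\widetilde{M}$, with $J^{-}(p)\cap\Sigma\subseteq\Sigma$ compact); the Ricci and mean curvature bounds are local and hence inherited, so $(\widetilde{M},g,\Sigma)$ again satisfies $CCC(\kappa,\beta)$. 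Moreover any causal curve of $M$ from $\Sigma$ to a point $p\in\mathrm{int}\,D^{+}(\Sigma)$ lies entirely in $\mathrm{int}\,D^{+}(\Sigma)$ (each of its points lies in $J^{+}(\Sigma)\cap J^{-}(p)$; cf.\ \citep[Ch.~14]{ONeill_SRG}), so the time separation $\tau_{\Sigma}$ computed in $\widetilde{M}$ agrees with the one in $M$ on $\mathrm{int}\,D^{+}(\Sigma)$, and a curve realizing it in $\widetilde{M}$ realizes it in $M$ as well.

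Next I would show $\tau_{\Sigma}\leq b_{\kappa,\beta}$ on $\mathrm{int}\,D^{+}(\Sigma)$. Assume to the contrary that $\tau_{\Sigma}(p)>b_{\kappa,\beta}$ for some $p\in\mathrm{int}\,D^{+}(\Sigma)\subseteq I^{+}(\Sigma)$; after the small perturbation used in the proof of Thm.\ \ref{thm:tau less b_kapa_beta (gen hawking)} (using continuity of $\tau_{\Sigma}$ and $\mu(\mathrm{Cut}^{+}(\Sigma))=0$, Prop.\ \ref{prop:cut locus has measure zero}) we may assume $p\notin\mathrm{Cut}^{+}(\Sigma)$, so by Prop.\ \ref{prop: time sep continuous} applied in $\widetilde{M}$ there is a unique maximizing unit speed timelike geodesic $\gamma^{p}$ from $\gamma^{p}(0)\in\Sigma$ to $p$, starting orthogonally and with $L(\gamma^{p})=\tau_{\Sigma}(p)$. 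Fix $T$ with $b_{\kappa,\beta}<\tau_{\Sigma}(p)<T$. Since $M$ is future geodesically complete, $\gamma^{p}$ and every unit speed geodesic of $M$ issuing orthogonally from $\Sigma$ exists in $M$ until at least $T$; and as $\gamma^{p}([0,T])$ is a compact subset of the open set $\mathrm{int}\,D^{+}(\Sigma)$ for $T$ close enough to $\tau_{\Sigma}(p)$, continuous dependence on initial data (Prop.\ \ref{prop: f}) yields a compact neighbourhood $A$ of $\gamma^{p}(0)$ in $\Sigma$ with $\mu_{\Sigma}(\partial A)=0$ such that every unit speed geodesic issuing orthogonally from $A$ stays inside $\mathrm{int}\,D^{+}(\Sigma)\subseteq\widetilde{M}$ up to parameter $T$, hence exists in $\widetilde{M}$ until $T$. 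Thus the hypotheses of Thm.\ \ref{thm: volume} are met for $(\widetilde{M},g,\Sigma)$, so $t\mapsto\mathrm{vol}\,B_{A}^{+}(t)/\mathrm{vol}_{\kappa,\beta}B_{B}^{+}(t)$ is nonincreasing on $(0,T]$, and one derives a contradiction exactly as in the proof of Thm.\ \ref{thm:tau less b_kapa_beta (gen hawking)}: a neighbourhood of $p$ with the (measure zero) cut locus removed has positive measure, is contained in $B_{A}^{+}(T)\setminus B_{A}^{+}(b_{\kappa,\beta})$, so $\mathrm{vol}\,B_{A}^{+}(T)>\mathrm{vol}\,B_{A}^{+}(b_{\kappa,\beta})$, while $\mathrm{vol}_{\kappa,\beta}B_{B}^{+}$ is constant for times $\geq b_{\kappa,\beta}$.

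To conclude, let $p\in D^{+}(\Sigma)$. If $p\in\Sigma$ then $p=\exp^{N}(0_{p})\in\exp^{N}(K)$. If $p\in\mathrm{int}\,D^{+}(\Sigma)$, then by the previous two steps $p=\gamma_{v}(\tau_{\Sigma}(p))=\exp^{N}(\tau_{\Sigma}(p)\,v)$ for some $v\in S^{+}N\Sigma$ with $\tau_{\Sigma}(p)\leq b_{\kappa,\beta}$, so $p\in\exp^{N}(K)$. Finally, if $p\in H^{+}(\Sigma)=D^{+}(\Sigma)\setminus\mathrm{int}\,D^{+}(\Sigma)$, then $p\in\overline{\mathrm{int}\,D^{+}(\Sigma)}$ (cf.\ \citep[Ch.~14]{ONeill_SRG}), so $p=\lim_{j}p_{j}$ with $p_{j}\in\mathrm{int}\,D^{+}(\Sigma)$; writing $p_{j}=\exp^{N}(t_{j}v_{j})$ with $(t_{j},v_{j})\in[0,b_{\kappa,\beta}]\times S^{+}N\Sigma$ and passing to a subsequence with $(t_{j},v_{j})\to(t,v)\in K$, continuity of $\exp^{N}$ gives $p=\exp^{N}(tv)\in\exp^{N}(K)$. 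Hence $D^{+}(\Sigma)\subseteq\exp^{N}(K)$ and therefore $\overline{D^{+}(\Sigma)}$ is compact. The main obstacle is the bookkeeping forced by $(M,g)$ not being globally hyperbolic: the volume comparison machinery has to be run on the globally hyperbolic $\widetilde{M}=\mathrm{int}\,D(\Sigma)$, with the geodesic-existence hypothesis of Thm.\ \ref{thm: volume} supplied by future completeness of the ambient $M$ together with the observation that the relevant geodesic segments stay inside $\mathrm{int}\,D^{+}(\Sigma)$, and the Cauchy horizon $H^{+}(\Sigma)$ (where $\tau_{\Sigma}$ need not be continuous) has to be treated by a limiting argument.
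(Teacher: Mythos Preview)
Your approach is essentially the same as the paper's: pass to the globally hyperbolic Cauchy development, obtain $\tau_\Sigma\le b_{\kappa,\beta}$ there via the volume comparison, and conclude $D^+(\Sigma)\subset\exp^N([0,b_{\kappa,\beta}]\cdot S^+N\Sigma)$, which is compact by future completeness. The paper is considerably shorter because it invokes two facts you work around by hand: (i) for acausal $\Sigma$ with $\mathrm{edge}(\Sigma)=\emptyset$ one has $D(\Sigma)=D(\Sigma)^\circ$ (so $D(\Sigma)$ is already open and globally hyperbolic; this is cited from \citep[Thm.~A.22 and Prop.~A.23]{hawkingc11}), and (ii) Thm.~\ref{thm:tau less b_kapa_beta (gen hawking)} can then be applied \emph{directly} to $(D(\Sigma),g,\Sigma)$ as a black box, yielding $\tau_\Sigma\le b_{\kappa,\beta}$ on all of $D^+(\Sigma)$ without re-running the contradiction argument. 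In particular, once $D(\Sigma)$ is known to be open, every $p\in D^+(\Sigma)\setminus\Sigma$ already lies in $\mathrm{int}\,D^+(\Sigma)$, so your separate limit treatment of ``$H^+(\Sigma)$'' becomes unnecessary.

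One small slip to fix: you write that $\gamma^p([0,T])$ is a compact subset of $\mathrm{int}\,D^+(\Sigma)$, but $\gamma^p(0)\in\Sigma$ and $\Sigma\cap\mathrm{int}\,D^+(\Sigma)=\emptyset$ (any neighbourhood of a point of $\Sigma$ meets $I^-(\Sigma)$, which is disjoint from $D^+(\Sigma)$ by acausality). The correct containment is $\gamma^p([0,T])\subset\widetilde M=\mathrm{int}\,D(\Sigma)$: indeed $\gamma^p(0)\in\Sigma\subset\widetilde M$, for $s\in(0,\tau_\Sigma(p))$ one has $\gamma^p(s)\in I^+(\Sigma)\cap I^-(p)\subset\mathrm{int}\,D^+(\Sigma)$, and $p\in\mathrm{int}\,D^+(\Sigma)$ by assumption. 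Since your final conclusion only needs the geodesics to stay in $\widetilde M$, this does not affect the outcome, but the sentence as written is inaccurate.
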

\begin{proof}
By \citep[Thm.~A.22 and Prop.~A.23]{hawkingc11} $D(\Sigma)=D(\Sigma)^{\circ}$
is globally hyperbolic, so we may apply Thm.\ \ref{thm:tau less b_kapa_beta (gen hawking)}
to $(D(\Sigma),g,\Sigma)$, to obtain $\tau_{\Sigma}(p)\leq b_{\kappa,\beta}$
for all $p\in D^{+}(\Sigma)$ and thus $D^{+}(\Sigma)\subset\exp^{N}([0,b_{\kappa,\beta}]\cdot\Sigma)$,
which is compact.
\end{proof}
The case $\kappa=0$, $\beta<0$ of the previous Lemma provides an
alternative proof of Hawking's singularity theorem for $\mathcal{C}^{1,1}$-metrics:
Already in the smooth case the proof of Hawking's singularity theorem
splits into two distinct parts, namely an analytic bit, which shows
relative compactness of $D^{+}(\Sigma)$, and a part using causality
theory. This second part proceeds in the same way whether one deals
with smooth or merely $\mathcal{C}^{1,1}$ metrics, so we will not
repeat it here (see e.g.\ \citep[Thm.~14.55A and 14.55B]{ONeill_SRG}
for the smooth case or \citep[Thm.~1.1]{hawkingc11} for the $\mathcal{C}^{1,1}$
proof%
\footnote{Note that the future convergence in \citep{hawkingc11} is the negative
of the mean curvature and hence it is bounded from below in the assumptions
of \citep[Thm.~1.1]{hawkingc11}.%
}). Thus we obtain:
\begin{theorem}[{\citep[Thm.~1.1]{hawkingc11}}]
\label{thm:C11 hawking for M not glob hyp}Let $\left(M,g,\Sigma\right)$
with $g\in\mathcal{C}^{1,1}$ satisfy $CCC(\kappa,\beta)$ with $\kappa,\beta$
as in Thm.\ \ref{thm:tau less b_kapa_beta (gen hawking)} and $\Sigma$
compact. Then $\left(M,g\right)$ is future geodesically incomplete.
\end{theorem}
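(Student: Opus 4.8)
The plan is to combine the preceding Lemma with the causality-theoretic half of the classical Hawking argument, run as a proof by contradiction. First I would assume that $(M,g)$ is future geodesically complete. Since $\Sigma$ is acausal and FCC, its edge is empty (Rem.\ \ref{rem: FCC things}) and the Cauchy development $D(\Sigma)=D(\Sigma)^{\circ}$ is globally hyperbolic with $\Sigma$ a compact Cauchy hypersurface for it; moreover $b_{\kappa,\beta}<\infty$ for each of the three admissible pairs $(\kappa,\beta)$ (Table \ref{tab:Warping-functions-for}). Hence the preceding Lemma applies and yields that $D^{+}(\Sigma)$ is relatively compact in $M$ — concretely, $\tau_{\Sigma}\le b_{\kappa,\beta}$ on $D^{+}(\Sigma)$ (by Thm.\ \ref{thm:tau less b_kapa_beta (gen hawking)} applied to $D(\Sigma)^{\circ}$) and therefore $D^{+}(\Sigma)\subset\exp^{N}([0,b_{\kappa,\beta}]\cdot\Sigma)$, a compact set because $\Sigma$ is compact and $\exp^{N}$ is (locally Lipschitz, hence) continuous. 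In particular $\overline{D^{+}(\Sigma)}$ is compact, and since $H^{+}(\Sigma)\subset\overline{D^{+}(\Sigma)}$ the future Cauchy horizon $H^{+}(\Sigma)$ is compact as well.

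Next I would invoke the purely causality-theoretic second half of the smooth proof, which carries over verbatim to $\mathcal{C}^{1,1}$-metrics: either $H^{+}(\Sigma)=\emptyset$, in which case $\overline{D^{+}(\Sigma)}=D^{+}(\Sigma)$ is compact, contradicting that $D^{+}(\Sigma)$ is unbounded along the $\mathbb{R}$-factor of the splitting $D(\Sigma)^{\circ}\cong\mathbb{R}\times\Sigma$; or $H^{+}(\Sigma)\neq\emptyset$, in which case it is a non-empty compact achronal topological hypersurface ruled by past-inextendible null geodesic generators, so one such generator is a past-inextendible causal curve imprisoned in the compact set $H^{+}(\Sigma)$ — impossible in this spacetime. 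This is precisely the argument of \citep[Thm.~14.55A and 14.55B]{ONeill_SRG}, spelled out in $\mathcal{C}^{1,1}$-regularity in \citep[Thm.~1.1]{hawkingc11}; it uses only facts already at our disposal — the characterisations of global hyperbolicity (Prop.\ \ref{prop:glob hyp def+equivalences}), closedness of $J^{\pm}$, the limit-curve lemma (Lem.\ \ref{lem:Limit curve}), the splitting theorem for globally hyperbolic spacetimes with compact Cauchy hypersurface, the null-generator structure of Cauchy horizons, and non-total imprisonment — all of which are available for $\mathcal{C}^{1,1}$-metrics by the results collected in Section \ref{sec:The-Lorentzian-case} and the cited low-regularity causality literature. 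Obtaining a contradiction in both cases shows that $(M,g)$ cannot be future geodesically complete.

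I do not expect a serious obstacle in carrying this out: all of the genuinely analytic input — the Riccati/volume comparison and the resulting focal-point bound $\tau_{\Sigma}\le b_{\kappa,\beta}$ on $D^{+}(\Sigma)$ — has already been established in Thm.\ \ref{thm:tau less b_kapa_beta (gen hawking)} and the preceding Lemma, and nothing new of that kind is needed here. The only point requiring (mild) care is to verify that each causality-theoretic statement used in the smooth argument does have a $\mathcal{C}^{1,1}$-counterpart; this is exactly why the theorem is placed after the causality groundwork of Section \ref{sec:The-Lorentzian-case} has been laid, and why it suffices to invoke, rather than reproduce, this second half of the argument.
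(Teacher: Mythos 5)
Your proof is correct and follows the paper's approach exactly: it first applies the preceding Lemma (itself a consequence of Theorem~\ref{thm:tau less b_kapa_beta (gen hawking)} applied to the globally hyperbolic spacetime $D(\Sigma)^{\circ}$) to obtain that future geodesic completeness would force $D^{+}(\Sigma)$ to be relatively compact, and then invokes the unchanged causality-theoretic half of the classical Hawking argument, citing the same references \citep[Thm.~14.55A, 14.55B]{ONeill_SRG} and \citep[Thm.~1.1]{hawkingc11}. You spell out the second half in slightly more detail than the paper (which declines to repeat it), but the decomposition and the key ingredients are the same.
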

There seem to be several advantages of this new approach. First, it
illustrates the interdependence of the two curvature bounds $\kappa$
and $\beta$ very nicely (see conditions $(1)$ to $(3)$ in Thm.\
\ref{thm:tau less b_kapa_beta (gen hawking)}): The parameter $\beta$
describes the initial focusing ($\beta<0$) or defocusing ($\beta>0$)
of geodesics emanating orthogonally to $\Sigma$ (looking at the comparison
manifolds in Table \ref{tab:Warping-functions-for} we see that $\left|f_{\kappa,\beta}\right|$
is initially decreasing if $\beta<0$ and increasing if $\beta>0$
and by the formula for the areas in the proof of (\ref{eq:area convergence in comparison space})
the same remains true for $\mathrm{area}_{\kappa,\beta}S_{A}^{+}(t)$),
while $\kappa$ describes a global focusing ($\kappa>0$) or defocusing
($\kappa<0$) effect for timelike geodesics. Depending on their relative
strength there exists a time $t=b_{\kappa,\beta}$ where $f_{\kappa,\beta}$
becomes zero (and the comparison manifold becomes singular) or not.
By the volume comparison Theorem \ref{thm: volume} (and its application
in Thm.\ \ref{thm:C11 hawking for M not glob hyp}) this time gives
a universal bound on the maximal time of existence of geodesics starting
orthogonally to $\Sigma$ in globally hyperbolic manifolds satisfying
the respective curvature bounds. While of course this behavior is
also present in the Rauchaudhuri argument used in \citep{hawkingc11}
(and for the smooth case in e.g.\ \citep{Seno1}) and an analogous
argument would also suffice to show cases $(1)$ and $(3)$ from Thm.\
\ref{thm:tau less b_kapa_beta (gen hawking)}, it seems that it is
somewhat more explicit in the comparison treatment given here.

Second, while the proof of Thm.\ \ref{thm: volume} again relies
on approximation arguments, the volume comparison result itself now
provides a tool which works directly in $\mathcal{C}^{1,1}$ and allows
us to prove other important results (e.g., Thm.\ \ref{thm:tau less b_kapa_beta (gen hawking)}
and Thm.\ \ref{thm:C11 hawking for M not glob hyp}) without returning
to the smooth case. 

And, perhaps most importantly, the volume comparison theorem Thm.\
\ref{thm: volume} itself is of considerable interest: As already
pointed out by the authors of \citep{TG}, their results are remarkably
close to the corresponding Riemannian ones and thus might lend themselves
to generalizations of curvature bounds to even lower regularity, a
hope that may be strengthened by the $\mathcal{C}^{1,1}$ version
of their volume comparison result (\citep[Thm.~9]{TG}) proven here.

\subsection*{Acknowledgment}
The author is grateful to James D.E.\ Grant, Michael Kunzinger and
Roland Steinbauer for helpful discussions and feedback to drafts of
this work. This work was supported by the Austrian Science Fund (FWF), project numbers P25326 and P28770. The author is also the recipient of a DOC Fellowship of the Austrian Academy of Sciences at the Institute of Mathematics at the University of Vienna.

\appendix

\section{Some results from measure theory}

To show the measurability of the cut function in Lem.\ \ref{lem: cut function measurable}
we need some tools from measure theory, the main one being the measurable
projection theorem (see \citep[Thm.~III.23]{CVmeasuremultifunc}):
\begin{theorem}
[Measurable Projection] \label{thm:measure projection}Let $\left(\Omega,\mathcal{A}\right)$
be a measurable space and $S$ a Suslin space. If $G$ is measurable
in the product $\sigma$-algebra $\mathcal{A}\otimes\mathcal{B}(S)$
(where $\mathcal{B}(S)$ denotes the Borel-$\sigma$-algebra of $S$),
then its projection $\mathrm{pr}_{\Omega}(G)\subset\Omega$ is universally
measurable.
\end{theorem}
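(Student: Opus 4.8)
The plan is to route everything through the \emph{Suslin operation} (operation~$\mathcal{A}$) and to reduce the topological side to the Baire space $\mathbb{N}^{\mathbb{N}}$. First I would recall that every nonempty Polish space is a continuous image of $\mathbb{N}^{\mathbb{N}}$; since a Suslin space is by definition a continuous (Hausdorff) image of a Polish space, every Suslin space $S$ is a continuous image of $\mathbb{N}^{\mathbb{N}}$ (the case $S=\emptyset$ being trivial). Fixing a continuous surjection $f\colon\mathbb{N}^{\mathbb{N}}\to S$, the map $\mathrm{id}_{\Omega}\times f$ is measurable from $\mathcal{A}\otimes\mathcal{B}(\mathbb{N}^{\mathbb{N}})$ to $\mathcal{A}\otimes\mathcal{B}(S)$ (as $f$ is continuous, hence Borel), so $\tilde{G}:=(\mathrm{id}_{\Omega}\times f)^{-1}(G)$ lies in $\mathcal{A}\otimes\mathcal{B}(\mathbb{N}^{\mathbb{N}})$, and surjectivity of $f$ gives $\mathrm{pr}_{\Omega}(\tilde{G})=\mathrm{pr}_{\Omega}(G)$. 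Hence it is enough to prove the theorem for $S=\mathbb{N}^{\mathbb{N}}$.

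For that case I would split the proof into two lemmas. Write $\mathcal{S}(\mathcal{E})$ for the family of sets obtained by applying the Suslin operation to schemes $\{E_{s}\}_{s\in\mathbb{N}^{<\mathbb{N}}}$ with all $E_{s}$ in a paving $\mathcal{E}$. The first lemma, which is the abstract form of Suslin's classical statement that the projection of a Borel set is analytic, is that $\mathrm{pr}_{\Omega}(G)\in\mathcal{S}(\mathcal{A})$ for every $G\in\mathcal{A}\otimes\mathcal{B}(\mathbb{N}^{\mathbb{N}})$; this is proved by using that $\mathcal{B}(\mathbb{N}^{\mathbb{N}})$ is generated by the clopen cylinders, that $\mathcal{S}(\mathcal{A})$ contains $\mathcal{A}$ and is stable under countable unions and countable intersections, and that projecting off the $\mathbb{N}^{\mathbb{N}}$-factor is compatible with these operations. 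The second lemma is that every member of $\mathcal{S}(\mathcal{A})$ is universally measurable: fix a finite measure $\mu$ on $(\Omega,\mathcal{A})$ and let $\mathcal{A}_{\mu}$ be its completion; then $\mathcal{A}\subseteq\mathcal{A}_{\mu}$, and, by the classical theorem of Szpilrajn--Marczewski, $\mathcal{A}_{\mu}$ is itself closed under the Suslin operation, so $\mathcal{S}(\mathcal{A})\subseteq\mathcal{S}(\mathcal{A}_{\mu})=\mathcal{A}_{\mu}$. Since $\mu$ was an arbitrary finite measure on $(\Omega,\mathcal{A})$, this says exactly that $\mathrm{pr}_{\Omega}(G)$ is universally measurable.

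I expect the main obstacle to be the Szpilrajn--Marczewski step, i.e.\ stability of a completion under the Suslin operation. The way I would prove it: starting from a scheme $\{A_{s}\}\subseteq\mathcal{A}_{\mu}$, I would first make it \emph{regular} (arrange $A_{s\frown i}\subseteq A_{s}$ by replacing $A_{s}$ with the intersection along its initial segments, which does not change the Suslin result), then consider the partial kernels $A(s):=\bigcup\{\bigcap_{n}A_{\sigma|n}:\sigma\supseteq s\}$, so that the target set is $A(\varnothing)$, $A(s)=\bigcup_{i}A(s\frown i)$ and $A(s)\subseteq A_{s}$. The $A(s)$ need not be measurable, but one controls their outer measures: $\mu^{*}\!\bigl(A(s)\setminus\bigcup_{i\le k}A(s\frown i)\bigr)\to 0$ as $k\to\infty$, and from this, choosing $k=k(s)$ large along all finite sequences and taking suitable countable unions and intersections of the measurable sets $A_{s}$, one squeezes $A(\varnothing)$ between a measurable set and a set of the same outer measure, forcing $A(\varnothing)\in\mathcal{A}_{\mu}$. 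This outer-measure bookkeeping is the genuinely delicate point; the reduction of the first paragraph and the stability properties of $\mathcal{S}(\mathcal{A})$ are routine by comparison. For the paper's purposes this theorem is of course simply quoted from \citep[Thm.~III.23]{CVmeasuremultifunc}; the sketch above is how one would establish it from scratch.
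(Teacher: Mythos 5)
The paper does not give its own proof of this theorem: it is quoted verbatim from Castaing--Valadier \citep[Thm.~III.23]{CVmeasuremultifunc}, so there is nothing in the paper to compare your argument against. That said, your sketch follows the standard route to the measurable projection theorem and is essentially correct. The reduction of a general Suslin space $S$ to $\mathbb{N}^{\mathbb{N}}$ via a continuous surjection $f$, together with the observation that $\mathrm{pr}_{\Omega}\bigl((\mathrm{id}_{\Omega}\times f)^{-1}(G)\bigr)=\mathrm{pr}_{\Omega}(G)$, is the right first move; the decomposition into the two lemmas ($\mathrm{pr}_{\Omega}(G)\in\mathcal{S}(\mathcal{A})$ for $G\in\mathcal{A}\otimes\mathcal{B}(\mathbb{N}^{\mathbb{N}})$, and $\mathcal{S}(\mathcal{A})\subseteq\hat{\mathcal{A}}$ by Szpilrajn--Marczewski) is exactly how the theorem is usually organized, and your outline of the Szpilrajn--Marczewski step via a regularized scheme and outer-measure control of the partial kernels $A(s)$ is the classical argument.

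One imprecision deserves a flag. In the first lemma you justify $\mathrm{pr}_{\Omega}(G)\in\mathcal{S}(\mathcal{A})$ by saying that $\mathcal{S}(\mathcal{A})$ is closed under countable unions and intersections and that ``projecting off the $\mathbb{N}^{\mathbb{N}}$-factor is compatible with these operations.'' Projection commutes with arbitrary unions, but it does \emph{not} commute with countable intersections, and that failure is precisely the nontrivial obstacle this lemma has to surmount. In practice one either (a) shows that every $G\in\mathcal{A}\otimes\mathcal{B}(\mathbb{N}^{\mathbb{N}})$ belongs to the Suslin class over rectangles $A\times N_{s}$ ($N_{s}$ a basic clopen cylinder) and then rearranges the scheme so that along every branch the cylinders are nested with $\bigcap_{n}N_{\sigma|n}$ a singleton, making $\mathrm{pr}_{\Omega}\bigl(\bigcap_{n}(A_{\sigma|n}\times N_{\sigma|n})\bigr)=\bigcap_{n}A_{\sigma|n}$ legitimate; or (b) routes through a compact paving (e.g.\ by embedding $\mathbb{N}^{\mathbb{N}}$ into a compact metric space) and uses that for a decreasing sequence of nonempty compacts the projection of $\bigcap_{n}(A_{n}\times K_{n})$ really is $\bigcap_{n}A_{n}$. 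Either fix is standard, but it is genuinely of the same combinatorial flavor as the Szpilrajn--Marczewski step, so calling it ``routine by comparison'' undersells it; the rest of the sketch is fine.
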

This statement uses the following definitions (see \citep[Def.~III.17 and III.21]{CVmeasuremultifunc}):
\begin{defn}
[Suslin and Polish spaces] A Suslin space is a Hausdorff topological
space that is the continuous image of a Polish space. A Polish space
is a separable completely metrizable topological space.\end{defn}
\begin{example}
\label{ex: R suslin}Clearly, $\mathbb{R}$ is Polish, hence also
Suslin.\end{example}
\begin{defn}
[Universal $\sigma$-algebra]Let $\left(\Omega,\mathcal{A}\right)$
be a measurable space. Given any finite measure $\mu$ we denote the
completion of $\mathcal{A}$ with respect to $\mu$ by $\mathcal{A}_{\mu}$.
Then the universal $\sigma$-algebra $\hat{\mathcal{A}}$ is defined
as 
\[
\hat{\mathcal{A}}:=\bigcap_{\mu\,\mathrm{finite}}\mathcal{A}_{\mu}.
\]
\end{defn}
\begin{rem}
If $\mu$ is a $\sigma$-finite measure on $\left(\Omega,\mathcal{A}\right)$
then there exists an equivalent (i.e.\ having the same zero-measure
sets) measure $\tilde{\mu}$ that is finite. So one has
\[
\hat{\mathcal{A}}=\bigcap_{\mu\,\sigma-\mathrm{finite}}\mathcal{A}_{\mu}.
\]
This shows that any universally measurable set is measurable with
respect to every complete $\sigma$-finite measure $\mu$ on $\left(\Omega,\mathcal{A}\right)$.
\end{rem}
This allows us to show the following:
\begin{prop}
\label{prop:measurable sup}Given a measurable space $\left(\Omega,\mathcal{A}\right)$,
a Suslin space $S$, a measurable function $f:\Omega\times S\to\mathbb{R}$
(w.r.t.\ to the product $\sigma$-algebra $\mathcal{A}\otimes\mathcal{B}(S)$
on $\Omega\times S$ and the Borel-$\sigma$-algebra on $\mathbb{R}$)
and a set-valued map $F:\Omega\to\mathcal{P}(S)$ with $\mathrm{graph}(F):=\left\{ \left(\omega,s\right)\in\Omega\times S:\, s\in F(\omega)\right\} \in\mathcal{A}\otimes\mathcal{B}(S)$,
one has that the map $f^{*}:\Omega\to\bar{\mathbb{R}}$ defined by
\[
f^{*}(\omega):=\sup\left\{ f(\omega,s):\, s\in F(\omega)\right\} 
\]
is measurable with respect to the universal $\sigma$-algebra $\hat{\mathcal{A}}$
(and the Borel-$\sigma$-algebra on $\bar{\mathbb{R}}$).\end{prop}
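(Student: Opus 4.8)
The plan is to reduce the claim to a single application of the Measurable Projection Theorem (Thm.\ \ref{thm:measure projection}). Since a function into $\bar{\mathbb{R}}$ is Borel measurable as soon as the preimages of the sets $(a,\infty]$, $a\in\mathbb{R}$, are measurable (these generate $\mathcal{B}(\bar{\mathbb{R}})$, and moreover $\{+\infty\}=\bigcap_{n}(n,\infty]$, $\{-\infty\}=\bar{\mathbb{R}}\setminus\bigcup_{n}(-n,\infty]$), it suffices to show that $\{\omega\in\Omega:\,f^{*}(\omega)>a\}\in\hat{\mathcal{A}}$ for every $a\in\mathbb{R}$.

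The key observation is that, by definition of the supremum (with the convention $\sup\emptyset=-\infty$), one has $f^{*}(\omega)>a$ if and only if there exists $s\in F(\omega)$ with $f(\omega,s)>a$. Hence, writing $\mathrm{pr}_{\Omega}$ for the projection $\Omega\times S\to\Omega$,
\[
\{\omega\in\Omega:\,f^{*}(\omega)>a\}=\mathrm{pr}_{\Omega}\bigl(\mathrm{graph}(F)\cap f^{-1}((a,\infty))\bigr).
\]
Now $\mathrm{graph}(F)\in\mathcal{A}\otimes\mathcal{B}(S)$ by hypothesis, and $f^{-1}((a,\infty))\in\mathcal{A}\otimes\mathcal{B}(S)$ since $f$ is measurable with respect to the product $\sigma$-algebra; therefore the set $G_{a}:=\mathrm{graph}(F)\cap f^{-1}((a,\infty))$ lies in $\mathcal{A}\otimes\mathcal{B}(S)$. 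As $S$ is a Suslin space, Thm.\ \ref{thm:measure projection} applies to $G_{a}$ and yields that $\mathrm{pr}_{\Omega}(G_{a})$ is universally measurable, i.e.\ belongs to $\hat{\mathcal{A}}$.

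Combining these two steps finishes the proof: $\{f^{*}>a\}\in\hat{\mathcal{A}}$ for all $a\in\mathbb{R}$ gives $(f^{*})^{-1}(E)\in\hat{\mathcal{A}}$ for every $E\in\mathcal{B}(\bar{\mathbb{R}})$, which is exactly the asserted $\hat{\mathcal{A}}$-measurability of $f^{*}$. I do not expect any real obstacle here; the entire content sits in invoking the measurable projection theorem, and the only points needing (minimal) care are verifying that $f^{-1}((a,\infty))$ belongs to the product $\sigma$-algebra and that it is enough to test measurability on the generating family $\{(a,\infty]\}_{a\in\mathbb{R}}$ of $\mathcal{B}(\bar{\mathbb{R}})$.
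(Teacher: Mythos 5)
Your proof is correct and follows essentially the same route as the paper: reduce to showing $\{f^{*}>a\}\in\hat{\mathcal{A}}$, rewrite this set as $\mathrm{pr}_{\Omega}\bigl(\mathrm{graph}(F)\cap f^{-1}((a,\infty))\bigr)$, and invoke the measurable projection theorem. The only difference is that you spell out in slightly more detail why it suffices to test on the generating family $\{(a,\infty]\}$ and the convention $\sup\emptyset=-\infty$, which the paper leaves implicit.
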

\begin{proof}
It clearly suffices to show that $\left\{ \omega:\, f^{*}(\omega)>r\right\} \in\hat{\mathcal{A}}$
for all $r\in\mathbb{R}$. This follows immediately from
\[
\left\{ \omega:\, f^{*}(\omega)>r\right\} =\mathrm{pr}_{\Omega}\left(\mathrm{graph}(F)\cap\left\{ (\omega,s):\, f(\omega,s)>r\right\} \right),
\]
measurability of $f$, $\mathrm{graph}(F)\in\mathcal{A}\otimes\mathcal{B}(S)$
and the measurable projection theorem \ref{thm:measure projection}.
\end{proof}
Another statement we need concerns itself with the measurability of
the graph of a measurable function (and can, e.g., be found in \citep[Prop.~3.1.21]{Srivastava_borelSets})
\begin{prop}
\label{prop: graph is measurable}Let $\left(\Omega,\mathcal{A}\right)$
be a measurable space and $X$ a second countable topological space
satisfying the $T_{1}$ separation axiom (for every pair of distinct
points there exists a neighborhood for each that does not contain
the other) with Borel-$\sigma$-algebra $\mathcal{B}(X)$. If $f:\Omega\to X$
is measurable, then $\mathrm{graph}(f)\in\mathcal{A}\otimes\mathcal{B}(X)$.\end{prop}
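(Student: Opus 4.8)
The plan is to express the diagonal $D:=\{(x,y)\in X\times X:\ x=y\}$ as an element of the product $\sigma$-algebra $\mathcal{B}(X)\otimes\mathcal{B}(X)$ using second countability and the $T_1$ axiom, and then to realize $\mathrm{graph}(f)$ as a preimage of $D$ under a suitable measurable map.

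First I would fix a countable base $\{U_n\}_{n\in\mathbb{N}}$ for the topology of $X$ and observe that for any two distinct points $x\neq y$ there is some $n$ with $x\in U_n$ and $y\notin U_n$: by the $T_1$ property there is an open set $V$ with $x\in V$ and $y\notin V$, and since $\{U_n\}$ is a base there is $U_n$ with $x\in U_n\subseteq V$. Applying this (and its symmetric counterpart) shows that a pair $(x,y)$ fails to lie in $D$ precisely when $x\in U_n,\ y\notin U_n$ for some $n$ or $y\in U_n,\ x\notin U_n$ for some $n$. Hence
\[
(X\times X)\setminus D=\bigcup_{n\in\mathbb{N}}\big((U_n\times(X\setminus U_n))\cup((X\setminus U_n)\times U_n)\big),
\]
and since each $U_n$ is open (hence Borel), the right-hand side lies in $\mathcal{B}(X)\otimes\mathcal{B}(X)$; therefore so does $D$ itself.

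Next I would consider the map $\Phi:\Omega\times X\to X\times X$, $\Phi(\omega,x):=(f(\omega),x)$. Its first component is $f$ composed with the (measurable) projection $\Omega\times X\to\Omega$, and its second component is the projection $\Omega\times X\to X$, so $\Phi$ is measurable from $\mathcal{A}\otimes\mathcal{B}(X)$ to $\mathcal{B}(X)\otimes\mathcal{B}(X)$. Since $\mathrm{graph}(f)=\Phi^{-1}(D)$, this gives $\mathrm{graph}(f)\in\mathcal{A}\otimes\mathcal{B}(X)$. Equivalently, pulling the displayed formula for $D$ back through $\Phi$ writes $\mathrm{graph}(f)$ directly as a countable intersection of complements of finite unions of the measurable rectangles $f^{-1}(U_n)\times(X\setminus U_n)$ and $(\Omega\setminus f^{-1}(U_n))\times U_n$.

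I do not anticipate a genuine obstacle; the one point worth being careful about is that the $T_1$ axiom — rather than the stronger Hausdorff property — already suffices to separate a given pair of distinct points by a \emph{basic} open set containing exactly one of them, which is exactly what makes the countable description of the diagonal (and hence of the graph) go through. Everything else is routine manipulation of product $\sigma$-algebras.
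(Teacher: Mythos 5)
Your proof is correct and is essentially the paper's argument with a thin extra layer of abstraction: you first write the complement of the diagonal $D\subset X\times X$ as a countable union of rectangles $U_n\times U_n^{c}$ (plus the symmetric copies, which are not actually needed, since the $T_1$ argument already produces a basic open containing $x$ but not $y$), then pull back along $\Phi(\omega,x)=(f(\omega),x)$, whereas the paper skips the diagonal and directly writes $\mathrm{graph}(f)=\bigl[\bigcup_n f^{-1}(U_n)\times U_n^{c}\bigr]^{c}$. As you note yourself in the last sentence, unwinding your $\Phi^{-1}(D)$ recovers exactly that rectangle decomposition, so the two proofs coincide.
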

\begin{proof}
Let $U_{n}$ be a countable basis of open sets, then
\[
y\neq f(x)\Longleftrightarrow\exists n:\, f(x)\in U_{n}\mathrm{\, and\,}y\notin U_{n}.
\]
So we have
\[
\mathrm{graph}(f)=\left[\bigcup_{n}f^{-1}(U_{n})\times U_{n}^{c}\right]^{c},
\]
hence it is measurable.
\end{proof}
Furthermore, if the graph of a function between two $\sigma$-finite
measure spaces is measurable (and points have measure zero), it has
measure zero:
\begin{prop}
\label{prop: graph has measure zero}Let $\left(\Omega_{1},\mathcal{A}_{1},\mu_{1}\right)$
and $\left(\Omega_{2},\mathcal{A}_{2},\mu_{2}\right)$ be two $\sigma$-finite
measure spaces such that $\mu_{2}(\left\{ x\right\} )=0$ for all
$\left\{ x\right\} \in\mathcal{A}_{2}$. Assume $f:\Omega_{1}\to\Omega_{2}$
has measurable graph, i.e.\ $\mathrm{graph}(f)\in\mathcal{A}_{1}\otimes\mathcal{A}_{2}$,
then $\mu_{1}\otimes\mu_{2}(\mathrm{graph}(f))=0$.\end{prop}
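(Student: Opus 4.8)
The plan is to compute $\mu_1\otimes\mu_2(\mathrm{graph}(f))$ by Tonelli's theorem applied to the non-negative measurable function $\chi_{\mathrm{graph}(f)}$, which is legitimate precisely because both $(\Omega_1,\mathcal{A}_1,\mu_1)$ and $(\Omega_2,\mathcal{A}_2,\mu_2)$ are $\sigma$-finite and, by assumption, $\mathrm{graph}(f)\in\mathcal{A}_1\otimes\mathcal{A}_2$. First I would recall the standard fact from product measure theory that every $x$-section of a set in the product $\sigma$-algebra is again measurable: for $E\in\mathcal{A}_1\otimes\mathcal{A}_2$ and $x\in\Omega_1$ one has $E_x:=\{y\in\Omega_2:(x,y)\in E\}\in\mathcal{A}_2$. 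In particular $(\mathrm{graph}(f))_x\in\mathcal{A}_2$ for every $x$.

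Next, observe that the section of the graph over a point $x$ is exactly the singleton $\{f(x)\}$, i.e.\ $(\mathrm{graph}(f))_x=\{f(x)\}$. Combining the previous step with the hypothesis that $\mu_2(\{y\})=0$ for every singleton lying in $\mathcal{A}_2$, we get $\mu_2((\mathrm{graph}(f))_x)=\mu_2(\{f(x)\})=0$ for all $x\in\Omega_1$. Tonelli's theorem then yields
\[
\mu_1\otimes\mu_2(\mathrm{graph}(f))=\int_{\Omega_1}\chi_{\mathrm{graph}(f)}\,d(\mu_1\otimes\mu_2)=\int_{\Omega_1}\mu_2\big((\mathrm{graph}(f))_x\big)\,d\mu_1(x)=\int_{\Omega_1}0\,d\mu_1(x)=0,
\]
which is the claim.

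I do not expect any genuine obstacle here: the only points requiring a little care are (i) invoking Tonelli with the correct hypotheses — $\sigma$-finiteness of both factors is exactly what guarantees the iterated-integral formula for the non-negative integrand $\chi_{\mathrm{graph}(f)}$ — and (ii) making sure the section-measurability lemma is cited so that $\mu_2((\mathrm{graph}(f))_x)$ is well-defined before the hypothesis on singletons is applied. Both are standard, so the proof is essentially a one-line application of Tonelli once these are in place.
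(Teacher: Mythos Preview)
Your proof is correct and essentially identical to the paper's: both apply Fubini/Tonelli to $\chi_{\mathrm{graph}(f)}$, identify the $x$-section as $\{f(x)\}\in\mathcal{A}_2$, and conclude from the singleton hypothesis that the iterated integral vanishes. Your version is arguably a touch more careful in naming Tonelli (since the integrand is non-negative) and in explicitly invoking the section-measurability lemma, but the argument is the same.
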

\begin{proof}
We apply Fubini's theorem to the characteristic function $\chi_{\mathrm{graph}(f)}$.
This gives that for any $x\in\Omega_{1}$ the functions
\[
\chi_{\mathrm{graph}(f)}(x,.)=\chi_{\{f(x)\}}
\]
from $\Omega_{2}\to\mathbb{R}$ are measurable, hence $\left\{ f(x)\right\} \in\mathcal{A}_{2}$
for all $x$ and so by assumption on $\mu_{2}$ we have $\mu_{2}\left(\left\{ f(x)\right\} \right)=0$.
But then Fubini gives
\[
\mu_{1}\otimes\mu_{2}(\mathrm{graph}(f))=\int_{\Omega_{1}}\int_{\Omega_{2}}\chi_{\{f(x)\}}(y)d\mu_{2}(y)d\mu_{1}(x)=0,
\]
proving the claim.
\end{proof}
Finally, we will state a result concerning images of sets of measure
zero under Lipschitz continuous functions on $\mathbb{R}^{n}$ (which
can be found, e.g., in \citep[Prop.~3.2]{Kahn_globAna} for differentiable
maps, but the proof only uses the Lipschitz property) that is needed
in various proofs of this work.
\begin{prop}
\label{prop: L stetig maps zero measure to zero}Let $f:\mathbb{R}^{n}\to\mathbb{R}^{n}$
be Lipschitz continuous. If $A\subset\mathbb{R}^{n}$ has (Lebesgue-)measure
zero, then $f(A)\subset\mathbb{R}^{n}$ has (Lebesgue-)measure zero
as well.
\end{prop}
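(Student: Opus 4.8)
The plan is to give a direct covering argument resting on a single fact: an $L$-Lipschitz map sends a ball of radius $r$ into a ball of radius $Lr$, so it inflates the volume of each ball in a cover by at most the fixed factor $L^{n}$. (If $f$ is constant, i.e.\ $L=0$, the statement is trivial, so assume $L>0$ throughout.)

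First I would unwind the hypothesis: $A$ having Lebesgue measure zero means that for every $\varepsilon>0$ there is a countable family of open balls $B(x_{i},r_{i})\subset\mathbb{R}^{n}$ with $A\subset\bigcup_{i}B(x_{i},r_{i})$ and $\sum_{i}r_{i}^{n}<\varepsilon$ (recall $\mathrm{vol}\,B(x_{i},r_{i})=\omega_{n}r_{i}^{n}$ with $\omega_{n}:=\mathrm{vol}\,B(0,1)$, so this is the usual definition of a null set up to the harmless constant $\omega_{n}$). Next I would apply the Lipschitz bound pointwise: from $|f(y)-f(z)|\le L|y-z|$ one gets $f(B(x_{i},r_{i}))\subset B(f(x_{i}),Lr_{i})$ for each $i$, hence
\[
f(A)\subset\bigcup_{i}f\bigl(A\cap B(x_{i},r_{i})\bigr)\subset\bigcup_{i}B\bigl(f(x_{i}),Lr_{i}\bigr).
\]
By countable subadditivity of the Lebesgue outer measure $\lambda$ on $\mathbb{R}^{n}$ this gives
\[
\lambda\bigl(f(A)\bigr)\le\sum_{i}\mathrm{vol}\,B\bigl(f(x_{i}),Lr_{i}\bigr)=\omega_{n}L^{n}\sum_{i}r_{i}^{n}<\omega_{n}L^{n}\varepsilon,
\]
and since $\varepsilon>0$ was arbitrary (with $L$ and $n$ fixed) we conclude $\lambda(f(A))=0$, i.e.\ $f(A)$ has measure zero.

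A couple of remarks on where care is, and is not, needed. No boundedness assumption on $A$ is required, since the definition of a null set already supplies countable covers of arbitrarily small total volume; and the precise value of the dimensional constant $\omega_{n}$ is irrelevant, as we only ever need an estimate of the shape $\lambda(f(A))\le C(n)L^{n}\varepsilon$. The one genuinely load-bearing step is the inclusion $f(B(x,r))\subset B(f(x),Lr)$, which is immediate from the Lipschitz inequality. Accordingly I do not expect any real obstacle here: the statement is a routine consequence of countable subadditivity of Lebesgue measure together with the diameter-contraction property of Lipschitz maps, and the only thing to be mildly careful about is bookkeeping the constant and noting its irrelevance.
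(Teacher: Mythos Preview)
Your argument is correct and is the standard covering proof of this fact. The paper itself does not give a proof of this proposition; it merely cites \cite{Kahn_globAna} (stated there for differentiable maps) and remarks that only the Lipschitz property is used, so there is nothing to compare against beyond noting that your covering argument is precisely the elementary proof one would expect behind that citation.
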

\bibliographystyle{amsplain}
\bibliography{bibtex_BS2}

\end{document}